\newtheorem{theorem}{Theorem}[section]
\newtheorem{proposition}[theorem]{Proposition}
\newtheorem{lemma}[theorem]{Lemma}
\newtheorem{corollary}[theorem]{Corollary}
\numberwithin{equation}{section}
\title[Critical magnetic Schr\"odinger equation in $\mathbb{R}^2$]{Multiplicity and concentration results for a magnetic Schr\"{o}dinger equation with exponential critical growth in $\mathbb{R}^{2}$}
\author[P. d'Avenia]{Pietro d'Avenia}
\address[P. d'Avenia]{\newline\indent
	Dipartimento di Meccanica, Matematica e Management
	\newline\indent
	Politecnico di Bari
	\newline\indent
	Via Orabona 4,  70125  Bari, Italy}
\email{\href{mailto:pietro.davenia@poliba.it}{pietro.davenia@poliba.it}}
\author[C. Ji]{Chao Ji}
\address[C. Ji]{\newline\indent
	Department of Mathematics
	\newline\indent
	East China University of Science and Technology
	\newline\indent
	Shanghai 200237, PR China }
\email{\href{mailto:jichao@ecust.edu.cn}{jichao@ecust.edu.cn}}
\subjclass[2010]{
	35J20, 35J60, 35B33.
}
\keywords{Nonlinear Schr\"{o}dinger equation, Magnetic field,  Exponential critical growth, Trudinger-Moser inequality, Penalization technique.}
\thanks{P. d'Avenia is supported by PRIN project 2017JPCAPN {\em Qualitative and quantitative aspects of nonlinear PDEs}. C. Ji is partially supported by Shanghai Natural Science Foundation (18ZR1409100).}
\begin{document}

\maketitle

\begin{abstract}
In this paper  we study the  following nonlinear Schr\"{o}dinger equation with magnetic field
\begin{align*}
\Big(\frac{\varepsilon}{i}\nabla-A(x)\Big)^{2}u+V(x)u=f(| u|^{2})u,\quad x\in\mathbb{R}^{2},
\end{align*}
where  $\varepsilon>0$ is a parameter,  $V:\mathbb{R}^{2}\rightarrow \mathbb{R}$ and $A: \mathbb{R}^{2}\rightarrow \mathbb{R}^{2}$ are continuous
potentials and $f:\mathbb{R}\rightarrow \mathbb{R}$ has exponential critical
growth. Under a local assumption on the potential $V$,  by variational methods, penalization technique, and Ljusternick-Schnirelmann theory, we prove multiplicity and concentration of solutions for $\varepsilon$ small.
\end{abstract}

\maketitle
\begin{center}
	\begin{minipage}{11cm}
		\tableofcontents
	\end{minipage}
\end{center}

\section{Introduction and main results}

In this paper, we are concerned with multiplicity and concentration results for the following nonlinear  magnetic Schr\"{o}dinger equation
\begin{equation}
\label{1.1}
\Big(\frac{\varepsilon}{i}\nabla-A(x)\Big)^{2}u+V(x)u=f(|u|^{2})u
\quad
\hbox{in }\mathbb{R}^2,
\end{equation}
where $u\in H^{1}(\mathbb{R}^{2}, \mathbb{C})$, $\varepsilon>0$ is a parameter, $V:\mathbb{R}^{2}\rightarrow \mathbb{R}$ is a continuous function,  $f:\mathbb{R}\rightarrow \mathbb{R}$,
and the magnetic potential $A: \mathbb{R}^{2}\rightarrow\mathbb{R}^{2}$ is H\"{o}lder continuous with exponent $\alpha\in(0, 1]$.

Equation \eqref{1.1} arises when one looks for standing wave solutions $\psi(x, t):=e^{-iEt/\hbar}u(x)$, with $E\in \mathbb{R}$, of
\begin{equation*}
i\hbar \frac{\partial \psi}{\partial t}=\Big(\frac{\hbar}{i}\nabla-A(x)\Big)^{2}\psi+U(x)\psi-f(|\psi|^{2})\psi
\quad
\hbox{in }\mathbb{R}^2\times\mathbb{R}.
\end{equation*}
From a physical point of view, the existence of such solutions and the study of their shape  in the semiclassical limit, namely, as $\hbar\rightarrow 0^{+}$, or, equivalently, as $\varepsilon\to 0^+$ in \eqref{1.1}, is of the greatest importance, since the transition from Quantum Mechanics to Classical Mechanics can be formally performed by sending the Planck constant $\hbar$ to zero.

For equation \eqref{1.1}, there is a vast literature concerning the existence and multiplicity of bound state solutions, in particular for the case with $A\equiv 0$.
The first result in this direction was given by Floer and Weinstein in \cite{rFW}, where the case $N=1$ and $f=i_\mathbb{R}$ is considered.
Later, many authors generalized this result to larger values of $N$, using different methods. In \cite{rDF}, del Pino and Felmer studied existence and concentration of the solutions for the following problem
\[
\begin{cases}
-\varepsilon^{2}\Delta u+V(x)u=f(u) \hbox{ in }\Omega,\\
u=0 \text{ on } \partial \Omega, \;
u>0 \hbox{ in }\Omega,
\end{cases}
\]
where $\Omega$ is a possibly unbounded domain in $\mathbb{R}^{N}$, $N\geq 3$, the potential $V$ is locally H\"older continuous, bounded from below away from zero, there exists a bounded open set $\Lambda\subset \Omega$ such that
\[
\inf_{x\in \Lambda}V(x)<\min_{x\in \partial \Lambda}V(x),
\]
and the nonlinearity $f$ satisfies some subcritical growth conditions.
For further results about existence, multiplicity and qualitative properties of semiclassical states with various types of concentration behaviors, which have been established under various assumptions on the potential $V$ and on the nonlinearity $f$, see \cite{rACTY,rAF,rAF1,rAFN,rAS,rABC,rAMS,rBC,rBT1,rBT2,rDPR,rO1, rO2, rW1,rZ} the references therein (see also \cite{rADM,rAM,rHZ} for the fractional case).

On the other hand, also the  magnetic nonlinear Schr\"{o}dinger equation \eqref{1.1} has been extensively investigated  by many authors applying suitable variational
and topological methods (see \cite{rAFF,rAZ,rBCS,rBF,rCi,rCS1,rEL,rLS} and references therein). It is well known that the first result involving the magnetic field was obtained by Esteban and Lions \cite{rEL}. They used the concentration-compactness principle and minimization arguments to obtain solutions for $\varepsilon>0$ fixed and $N=2, 3$.
In particular, due to our scope, we want to mention \cite{rAFF} where the authors use the penalization method and Ljusternik-Schnirelmann category theory for subcritical nonlinearities and \cite{rBF} where the existence of a complex solution in presence of a nonlinearity with exponential critical growth in $\mathbb{R}^{2}$ is proved, and the recent contribution \cite{rAMV} where a multiplicity result for a nonlinear fractional magnetic Schr\"{o}dinger equation
with exponential critical growth in the one-dimensional case is given.

In this paper, motivated by \cite{rAFF, rDF}, we prove  multiplicity and concentration of nontrivial solutions for problem \eqref{1.1}, combining some assumptions on $V$, the penalization technique by del Pino and Felmer \cite{rDF} and the Ljusternik-Schnirelmann theory.

Assume that $V$ verifies the following properties:
\begin{enumerate}[label=($V_\arabic{*}$), ref=$V_\arabic{*}$]
	\item \label{V1}there exists $V_{0}>0$ such that $V(x)\geq V_{0}$ for all $x\in \mathbb{R}^{2}$;
	\item \label{V2}there exists a bounded open set $\Lambda\subset \mathbb{R}^{2}$ such that
	\[
	V_{0}=\min_{x\in \Lambda}V(x)<\min_{x\in \partial \Lambda}V(x).
	\]
\end{enumerate}
Observe that $$M:=\{x\in\Lambda: V(x)=V_{0}\}\neq \emptyset.$$
Moreover, let the nonlinearity $f$ be a $C^{1}$-function satisfying:
\begin{enumerate}[label=($f_\arabic{*}$), ref=$f_\arabic{*}$]
	\item \label{f1}$f(t)=0$ if $t\leq0$;
	\item \label{f2}there holds
	\[
	\lim_{t\rightarrow +\infty} \frac{f(t^{2})t}{e^{\alpha t^{2}}}
	=
	\begin{cases}
	0,& \hbox{for } \alpha> 4\pi,\\
	+\infty,& \hbox{for }  0<\alpha<4\pi;
	\end{cases}
	\]
	\item \label{f3}there is a positive constant $\theta>2$ such that
	\begin{equation*}
	0<\frac{\theta}{2} F(t)\leq tf(t), \quad \quad \forall\,  t>0,
	\quad
	\hbox{where }
	F(t)=\int_{0}^{t}f(s)ds;
	\end{equation*}
	\item \label{f4}there exist two constants $p>2$ and
	\begin{equation*}
	C_{p}>\max\left\{\Big[\beta_{p}\Big(\frac{2\theta}{\theta-2}\Big)\frac{1}{\min\{1, V_{0}\}}\Big]^{(p-2)\diagup 2}, V_0\left(\frac{p-2}{p}\right)^\frac{p-2}{2} S_p^{p/2} \right\}>0
	\end{equation*}
	such that
    \begin{equation*}
	f'(t)\geq \frac{p-2}{2}C_{p}t^{(p-4)/2}\quad \text{for all}\,\, t>0,
	\end{equation*}
	where
	\begin{equation*}
	\beta_{p}=\inf_{u\in \tilde{\mathcal{N}}_{0}}\tilde{I}_{0}(u),
	\quad
	\tilde{I}_{0}(u):=\frac{1}{2}\int_{\mathbb{R}^{2}}(\vert \nabla u\vert^{2}+V_{0}\vert u\vert^{2})dx-\frac{1}{p}\int_{\mathbb{R}^{2}}\vert u\vert^{p}dx,
	\end{equation*}
	\begin{equation*}
	\tilde{\mathcal{N}}_{0}:=\{u\in H^{1}(\mathbb{R}^{2}, \mathbb{R})\backslash\{0\}: \tilde{I}'_{0}(u)[u]=0\},
	\end{equation*}
and $S_p$ is the best constant for the Sobolev inequality
	$$S_p\Big( \int_{\mathbb{R}^2}| u |^p dx\Big)^{2/p} \leq  \int_{\mathbb{R}^2}| (\nabla u |^2 +| u |^2) dx;$$
    \item \label{f5} $f'(t)\leq (e^{4\pi t}-1)$ for any $t\geq0$.
\end{enumerate}
Observe that assumptions (\ref{f4}) and (\ref{f5}) imply that there exist two positive constants $C_1$ and $C_2$ such that
$$
C_1 t^{(p-4)/2}\leq f'(t)\leq C_{2} t,\quad \text{as } t\rightarrow 0^{+}
$$
and so $p> 6$.

Our main result is
\begin{theorem}\label{mt}
Assume that $V$ satisfies (\ref{V1}), (\ref{V2}) and $f$ satisfies (\ref{f1})--(\ref{f5}). Then, for any  $\delta>0$ such that
\begin{equation*}
M_{\delta}:=\{x\in \mathbb{R}^{2}: dist(x, M)<\delta\}\subset \Lambda,
\end{equation*}
there exists $\varepsilon_{\delta}>0$ such that, for any $0<\varepsilon<\varepsilon_{\delta}$, problem \eqref{1.1} has at least  $\text{cat}_{M_{\delta}}(M)$ nontrivial solutions. Moreover, for every sequence $\{\varepsilon_n\}$ such that $ \varepsilon_{n}\rightarrow 0^{+}$ as $n\to+\infty$, if we denote by $u_{\varepsilon_{n}}$ one of these solutions of \eqref{1.1} for $\varepsilon=\varepsilon_{n}$ and $\eta_{\varepsilon_{n}}\in \mathbb{R}^{2}$ the global maximum point of $\vert u_{\varepsilon_{n}}\vert$, then
	\begin{equation*}
	\lim_{\varepsilon_{n}\rightarrow 0^{+}}V(\eta_{\varepsilon_{n}})=V_{0}.
	\end{equation*}
\end{theorem}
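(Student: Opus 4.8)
The plan is to follow the by-now classical strategy of del Pino–Felmer combined with the penalization-plus-Ljusternik–Schnirelmann scheme adapted to the magnetic setting (as in \cite{rAFF}), the only genuinely new difficulty being the exponential critical growth of $f$, which forces all compactness arguments to go through Trudinger–Moser estimates. First I would introduce the penalized nonlinearity: fix $k>\theta/(\theta-2)$, let $a>0$ be the unique number with $f(a)=V_0/k$, and set $\tilde f(t)=f(t)$ for $t\le a$ and $\tilde f(t)=V_0/k$ for $t>a$; then define $g(x,t)=\chi_\Lambda(x)f(t)+(1-\chi_\Lambda(x))\tilde f(t)$ (with the usual convention $g(x,t)=0$ for $t\le 0$). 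The penalized functional $J_\varepsilon$ on the magnetic space $H^1_{\varepsilon,A}$, after the scaling $x\mapsto \varepsilon x$, reads $J_\varepsilon(u)=\tfrac12\|u\|_{\varepsilon}^2-\tfrac12\int G(\varepsilon x,|u|^2)$, and one shows it has the mountain-pass geometry uniformly in $\varepsilon$, using \eqref{f3} for the linking and a Trudinger–Moser bound (together with \eqref{f2}, \eqref{f5}) to control the critical term near the origin — here the hypothesis $p>6$ coming from \eqref{f4}, \eqref{f5} is what makes the lower-order behaviour $f(t)\sim t^{(p-2)/2}$ usable. One then proves the key $(PS)$-type compactness: any Palais–Smale sequence at a level below a threshold $c_*$ (of order $\min\{1,V_0\}/2$, so that $\limsup \|u_n\|_\varepsilon^2$ stays under the critical Trudinger–Moser mass $4\pi$) is relatively compact; the role of assumption \eqref{f4}, via the constant $\beta_p$ and the Sobolev constant $S_p$, is precisely to guarantee that the mountain-pass (and, later, the barycentre-constrained) minimax levels lie below $c_*$.

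Next I would set up the \emph{limit problem}. For $b\ge V_0$ let $I_b$ be the autonomous functional on $H^1(\mathbb R^2,\mathbb C)$ with potential $b$ and nonlinearity $f$, with Nehari manifold $\mathcal N_b$ and ground-state level $m(b)=\inf_{\mathcal N_b}I_b$; one checks $b\mapsto m(b)$ is nondecreasing, continuous, and that $m(b)$ is attained. (The diamagnetic inequality reduces the complex autonomous problem to the real one $\tilde I_0$, whence the comparison with $\beta_p$.) The heart of the multiplicity statement is then the standard two-maps argument: (i) a map $\Phi_\varepsilon: M_\delta\to \mathcal N_\varepsilon$ built by transplanting, via the magnetic phase $e^{i\,\tau_y(\varepsilon x)}$ with $\tau_y$ a primitive of $A$ near $y$, a fixed ground state of the limit problem cut off near $y\in M$, so that $J_\varepsilon(\Phi_\varepsilon(y))\to m(V_0)$ uniformly; and (ii) a barycentre map $\beta_\varepsilon:\{J_\varepsilon\le m(V_0)+h(\varepsilon)\}\cap\mathcal N_\varepsilon\to M_\delta$ such that $\beta_\varepsilon\circ\Phi_\varepsilon\simeq \mathrm{id}_M$. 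Constructing $\beta_\varepsilon$ requires a concentration lemma: if $u_n\in\mathcal N_{\varepsilon_n}$ with $J_{\varepsilon_n}(u_n)\to m(V_0)$, then $|u_n|$ concentrates (after translation) around a point of $M$ — this is proved by a Lions-type argument, again using the sub-$4\pi$ mass bound to rule out vanishing and the penalization to rule out mass escaping $\Lambda$. Ljusternik–Schnirelmann theory on the sublevel set then yields at least $\mathrm{cat}_{M_\delta}(M)$ critical points of $J_\varepsilon$.

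The final step is to remove the penalization: one must show that for $\varepsilon$ small these critical points $u_\varepsilon$ of $J_\varepsilon$ are actual solutions of \eqref{1.1}, i.e. that $|u_\varepsilon(x)|\le a$ outside $\Lambda/\varepsilon$. This is the usual Moser/De Giorgi iteration on $|u_\varepsilon|$, which satisfies (via Kato's inequality and the diamagnetic inequality) a differential inequality $-\Delta|u_\varepsilon|+V_0|u_\varepsilon|\le g(\varepsilon x,|u_\varepsilon|^2)|u_\varepsilon|$; one needs uniform $L^\infty$ and decay bounds, and here the exponential growth means the $L^\infty$ bound must be extracted from the Trudinger–Moser inequality applied to the (small) pieces of $u_\varepsilon$ living outside the concentration set. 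Combined with the concentration lemma this localizes $|u_\varepsilon|$ inside $\Lambda/\varepsilon$, so $g(\varepsilon x,|u_\varepsilon|^2)=f(|u_\varepsilon|^2)$ there and $u_\varepsilon$ solves \eqref{1.1}; scaling back by $x\mapsto x/\varepsilon$ and tracking the maximum point $\eta_{\varepsilon_n}$ of $|u_{\varepsilon_n}|$ gives, through the same concentration analysis, $\eta_{\varepsilon_n}\to$ some $x_0\in M$ along subsequences, hence $V(\eta_{\varepsilon_n})\to V_0$.

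I expect the main obstacle to be the compactness/level estimates under exponential critical growth: unlike the polynomial subcritical case of \cite{rAFF}, one cannot freely pass to the limit in $\int f(|u_n|^2)|u_n|^2$, and every such passage must be justified by showing the relevant minimax level stays strictly below the Trudinger–Moser threshold — this is exactly why the somewhat technical assumption \eqref{f4} (with its explicit lower bound on $C_p$ in terms of $\beta_p$, $\theta$, $V_0$, and $S_p$) is imposed, and verifying that it does the job in both the mountain-pass estimate and the $\Phi_\varepsilon$/barycentre estimate is the delicate point.
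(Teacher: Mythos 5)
Your proposal follows essentially the same route as the paper: del Pino--Felmer penalization with the Palais--Smale threshold $\bigl(\tfrac12-\tfrac1\theta-\tfrac1{2k}\bigr)\min\{1,V_0\}$ kept above $c_{V_0}$ thanks to (\ref{f4}), the two maps $\Phi_{\varepsilon}$ and $\beta_{\varepsilon}$ together with Ljusternik--Schnirelmann theory on a sublevel set of the Nehari manifold, and a Moser-type iteration yielding uniform $L^{\infty}$ and decay bounds that remove the penalization. The only divergence is in that last step, where the paper deliberately avoids Kato's inequality and instead tests the equation directly with $\eta^{2}v_{L,n}^{2(\beta-1)}v_{n}$, using the pointwise diamagnetic inequality and controlling the exponential term uniformly in $n$ by dominating $|v_{n}|$ with a fixed $H^{1}$ function supplied by the compactness of Proposition \ref{prop41}.
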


It is well known that, when we want to study by variational methods this type of equations  in the whole $\mathbb{R}^{2}$, we meet several difficulties due to the unboundedness of the domain and to the exponential critical growth of the nonlinearity.
Moreover, we only know local information on the potential $V$, and we don't have any condition on $V$ at infinity.
Thus we adapt the penalization technique explored
in \cite{rDF}. It consists in making a suitable modification on the nonlinearity $f$, solving a modified problem  and then check that, for $\varepsilon$ small
enough, the solutions of the modified problem are indeed solutions of the original one. It could be interesting to consider our problem without relying upon condition near $0$.\\
It is worthwhile to remark that in the arguments developed in \cite{rDF}, one of the key points is the existence of estimates involving the $L^{\infty}$-norm of the solutions of the modified problem.
In the the magnetic case,  this kind of estimates are more delicate, due also to the fact that we deal with complex valued functions. For subcritical nonlinearities, Alves et al. in \cite{rAFF} obtained $L^{\infty}$-estimates of the solutions of the modified problem by a different approach, which is based on Moser's iteration method (see \cite{rM}) instead of Kato's inequality. Here the problem we deal with has exponential critical growth in $\mathbb{R}^{2}$, so the method in \cite{rAFF} does not seem fully applicable.

The paper is organized as follows. In Section \ref{Sec2} we introduce the functional setting, give some preliminaries and study the limit problem.
In Section \ref{sec3}, we study the modified
problem. We prove the Palais-Smale condition for the modified functional and provide some tools which are useful to establish a multiplicity result. In Section \ref{sec4}, we show a multiplicity result for he modified
problem. Finally, in Section \ref{Sec5},  we complete the paper with the proof of Thereom \ref{mt}.

\subsection*{Notation}
\begin{itemize}
	\item $C, C_1, C_2, \ldots$ denote positive constants whose exact values are inessential and can change from line to line;
	\item $B_{R}(y)$ denotes the open disk centered at $y\in\mathbb{R}^2$ with radius $R>0$ and $B^{c}_{R}(y)$ denotes the complement of $B_{R}(y)$ in $\mathbb{R}^{2}$;
	\item $\Vert \cdot \Vert$, $\Vert \cdot \Vert_{q}$, and $\Vert\cdot \Vert_{L^{\infty}(\Omega)}$ denote the usual norms of the spaces $H^{1}(\mathbb{R}^{2}, \mathbb{R})$, $L^{q}(\mathbb{R}^{2}, \mathbb{R})$, and $L^{\infty}(\Omega, \mathbb{R})$, respectively, where $\Omega\subset \mathbb{R}^{2}$, and $\Vert \cdot \Vert_{V_{0}}:=(\|\nabla\cdot\|_2+V_0\|\cdot\|_2)^{1/2}$.
\end{itemize}

\section{The variational framework and the limit problem}\label{Sec2}

In this section, we present the functional spaces that we use, we introduce a {\em classical} equivalent version of \eqref{1.1}, we give some useful preliminary remarks, and we study a {\em limit problem} which will be useful for our arguments.

For $u: \mathbb{R}^{2}\rightarrow \mathbb{C}$, let us denote by
\begin{equation*}
\nabla_{A}u:=\Big(\frac{\nabla}{i}-A\Big)u,
\end{equation*}
and
\begin{equation*}
	H_{A}^{1}(\mathbb{R}^{2}, \mathbb{C}):=\{u\in L^{2}(\mathbb{R}^{2}, \mathbb{C}): |\nabla_{A}u|\in L^{2}(\mathbb{R}^{2}, \mathbb{R})\}.
	\end{equation*}
The space $H_{A}^{1}(\mathbb{R}^{2}, \mathbb{C})$ is an Hilbert space endowed with the scalar product
\begin{equation*}
\langle u, v\rangle
:=\operatorname{Re}\int_{\mathbb{R}^{2}}\Big(\nabla_{A}u\overline{\nabla_{A}v}+u\overline{v}\Big)dx,\quad\text{for any } u, v\in H_{A}^{1}(\mathbb{R}^{2}, \mathbb{C}),
\end{equation*}
where $\operatorname{Re}$ and the bar denote the real part of a complex number and the complex conjugation, respectively. Moreover we denote by $\Vert u\Vert_{A}$ the norm induced by this inner product.

On $H_{A}^{1}(\mathbb{R}^{2}, \mathbb{C})$ we will frequently use the following diamagnetic inequality (see e.g. \cite[Theorem~7.21]{rLL})
\begin{equation}
\label{2.1}
\vert \nabla_{A}u(x)\vert\geq \vert \nabla\vert u(x)\vert\vert.
\end{equation}

Moreover, making a simple change of variables, we can see that \eqref{1.1} is equivalent to
\begin{equation}
\label{1.6}
\Big(\frac{1}{i}\nabla-A_{\varepsilon}(x)\Big)^{2}u+V_{\varepsilon}(x)u=f(\vert u\vert^{2})u
\quad
\hbox{in }\mathbb{R}^2,
\end{equation}
where $A_{\varepsilon}(x)=A(\varepsilon x)$ and $V_{\varepsilon}(x)=V(\varepsilon x)$.

Let $H_{\varepsilon}$ be the Hilbert space obtained as the closure of $C_{c}^{\infty}(\mathbb{R}^{2}, \mathbb{C})$ with respect to the scalar product
\begin{align*}
\langle u, v\rangle_{\epsilon}
:=\operatorname{Re}\int_{\mathbb{R}^{2}}\Big(\nabla_{A_{\varepsilon}}u\overline{\nabla_{A_{\varepsilon}}v}+V_{\varepsilon}(x)u\overline{v}\Big)dx
\end{align*}
and let us denote by $\Vert \cdot \Vert_{\varepsilon}$ the norm induced by this inner product.

 The diamagnetic inequality \eqref{2.1} implies that, if $u\in H_{A_{\varepsilon}}^{1}(\mathbb{R}^{2}, \mathbb{C})$, then $|u|\in H^{1}(\mathbb{R}^{2}, \mathbb{R})$  and $\|u\|\leq C \|u\|_\varepsilon$. Therefore, the embedding $H_{\varepsilon}\hookrightarrow L^{r}(\mathbb{R}^{2}, \mathbb{C})$ is continuous for $r\geq 2$ and
the embedding $H_{\varepsilon}\hookrightarrow L^{r}_{\text{loc}}(\mathbb{R}^{2}, \mathbb{C})$ is compact for $r\geq 1$.

About the nonlinearity, we observe that,  by (\ref{f1}) and (\ref{f2}), fixed $q>2$, for any $\zeta>0$ and $\alpha>4\pi$, there exists a constant $C>0$, which depends on $q$, $\alpha$, $\zeta$, such that
	\begin{equation}
	\label{1.2}
	f(t)\leq\zeta+Ct^{(q-2)/2}(e^{\alpha t}-1) \text{ for all } t\geq 0
	\end{equation}
	and, using (\ref{f3}), we have
	\begin{equation}
	\label{1.3}
	F(t)\leq\zeta t+Ct^{q/2}(e^{\alpha t}-1) \text{ for all } t\geq 0.
	\end{equation}
	Moreover, it is easy to see that, by \eqref{1.2} and \eqref{1.3},
	\begin{equation}
	\label{1.4}
	f(t^{2})t^{2}\leq\zeta t^{2}+C\vert t\vert^{q}(e^{\alpha  t^{2}}-1) \text{ for all } t\in\mathbb{R}
	\end{equation}
	and
	\begin{equation}
	\label{1.5}
	F(t^{2})\leq\zeta t^{2}+C\vert t\vert^{q}(e^{\alpha t^{2}}-1)
	\text{ for all } t\in\mathbb{R}.
	\end{equation}

Finally, let us recall the following version of Trudinger-Moser inequality as stated e.g. in \cite[Lemma 1.2]{rACTY}.
\begin{lemma}\label{le24}
	If $\alpha>0$ and $u\in H^{1}(\mathbb{R}^{2}, \mathbb{R})$, then
	\begin{equation*}
	\int_{\mathbb{R}^{2}}(e^{\alpha  u^{2}}-1)dx<+\infty.
	\end{equation*}
	Moreover, if $\Vert \nabla u\Vert_{2}^{2}\leq 1$, $\Vert  u\Vert_{2}\leq M<+\infty$, and $0<\alpha< 4\pi$, then there exists a positive constant $C(M, \alpha)$, which depends only on $M$ and $\alpha$,  such that
	\begin{equation*}
	\int_{\mathbb{R}^{2}}(e^{\alpha u^{2}}-1)dx\leq C(M, \alpha).
	\end{equation*}
\end{lemma}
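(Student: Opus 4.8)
The plan is to prove Lemma \ref{le24} directly, treating its two assertions separately. The first part is finiteness of $\int_{\mathbb{R}^2}(e^{\alpha u^2}-1)\,dx$ for \emph{every} $u\in H^1(\mathbb{R}^2,\mathbb{R})$ and every $\alpha>0$, with no smallness restriction; the second part is the uniform bound $C(M,\alpha)$ under the normalization $\|\nabla u\|_2^2\le 1$, $\|u\|_2\le M$, valid only in the subcritical regime $0<\alpha<4\pi$. The key external input is the classical Trudinger--Moser inequality on the whole plane, due to Cao and Ruf--Li, which asserts that
\begin{equation*}
\sup_{\substack{u\in H^1(\mathbb{R}^2,\mathbb{R})\\ \|\nabla u\|_2\le 1,\ \|u\|_2\le 1}}\int_{\mathbb{R}^2}(e^{\alpha u^2}-1)\,dx<+\infty
\quad\hbox{if and only if }0<\alpha\le 4\pi,
\end{equation*}
which I would invoke as a cited result. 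The whole lemma is then a matter of reducing to this normalized form by scaling and by splitting off the tail.

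For the first assertion I would argue by density and monotone convergence. Expanding $e^{\alpha u^2}-1=\sum_{k\ge 1}\frac{\alpha^k}{k!}u^{2k}$, each term $u^{2k}\in L^1$ because $u\in H^1(\mathbb{R}^2)\subset L^{2k}(\mathbb{R}^2)$ for all $k\ge 1$ by the Sobolev embedding in dimension two, with $\|u\|_{2k}^{2k}\le C_k\|u\|_{H^1}^{2k}$. To get summability of the resulting series rather than finiteness of each term, I would instead use the sharp constant: given $u$, fix $\alpha>0$ arbitrary and choose $R$ so large that $\int_{B_R^c}|\nabla u|^2\,dx$ is as small as we like. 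On the complement $B_R^c$ the normalized function has small Dirichlet energy, so after rescaling $v(x):=u(\lambda x)$ with $\lambda$ chosen to make $\|\nabla v\|_2^2=1$ one lands in the admissible class and the global Trudinger--Moser bound applies, controlling $\int_{B_R^c}(e^{\alpha u^2}-1)\,dx$. On the bounded piece $B_R$ one simply uses that $u\in H^1(B_R)$ and the Trudinger--Moser inequality on bounded domains, which holds for every $\alpha$, to bound $\int_{B_R}(e^{\alpha u^2}-1)\,dx<+\infty$. Adding the two contributions gives finiteness.

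For the second, quantitative assertion the normalization $\|\nabla u\|_2^2\le 1$ already places $u$ almost in the admissible class; the only obstruction is that the $L^2$ bound is $\|u\|_2\le M$ rather than $\le 1$. The clean way to handle this is the scaling $v(x):=u(x/M)$ when $M\ge 1$ (or directly $u$ itself when $M\le 1$), which rescales the $L^2$ norm to at most $1$ while leaving the Dirichlet norm invariant (Dirichlet energy is scale invariant in two dimensions) and changes the integral only by the Jacobian factor $M^2$. Applying the sharp global inequality to $v$ with the given $\alpha<4\pi$ then yields $\int_{\mathbb{R}^2}(e^{\alpha u^2}-1)\,dx\le M^2\,\sup\{\cdots\}=:C(M,\alpha)$, a constant depending only on $M$ and $\alpha$, as claimed. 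I would note that the strict inequality $\alpha<4\pi$ is exactly what keeps the supremum finite.

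The main obstacle is the first assertion, precisely because no smallness of $\|\nabla u\|_2$ is assumed there: one cannot apply the global critical inequality directly to $u$. The decomposition into a large ball $B_R$ (where any $\alpha$ is admissible via the bounded-domain Trudinger--Moser inequality) and its exterior $B_R^c$ (where the Dirichlet tail is small enough, after scaling, to fall under the subcritical global inequality) is the crux; getting the rescaling on $B_R^c$ to respect both the gradient normalization and the cutoff requires a careful choice of $R$ and of the dilation parameter, and verifying that the cross terms produced by a cutoff function $\chi$ supported away from $B_R$ do not spoil the gradient bound is the one place where an honest estimate, rather than a formal citation, is needed.
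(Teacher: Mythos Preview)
The paper does not prove this lemma at all; it is merely quoted, with a reference to \cite[Lemma~1.2]{rACTY} (which in turn is Cao's classical result). So there is nothing to compare against, and your outline is indeed the standard argument. That said, two of your scaling steps are wrong as written.

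In the second assertion you set $v(x):=u(x/M)$ for $M\ge 1$ and claim this makes $\|v\|_2\le 1$. It does the opposite: in two dimensions $\|v\|_2=M\|u\|_2\le M^2$. The correct dilation is $v(x):=u(Mx)$, which gives $\|v\|_2=M^{-1}\|u\|_2\le 1$, preserves $\|\nabla v\|_2=\|\nabla u\|_2\le 1$ by scale invariance, and produces $\int_{\mathbb{R}^2}(e^{\alpha u^2}-1)\,dx=M^2\int_{\mathbb{R}^2}(e^{\alpha v^2}-1)\,dx\le M^2C(\alpha)$. This is just a sign slip.

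The first assertion has a genuine confusion. You propose, on $B_R^c$, to take $v(x):=u(\lambda x)$ ``with $\lambda$ chosen to make $\|\nabla v\|_2^2=1$''. But, as you correctly note two paragraphs later, the Dirichlet integral is dilation-invariant in dimension two: no $\lambda$ changes $\|\nabla v\|_2$, so this step is impossible. What you need is a \emph{multiplicative} rescaling of the function values, not a spatial one. Concretely: take a cutoff $\chi$ equal to $1$ on $B_R^c$, choose $R$ so that $\|\nabla(\chi u)\|_2\le\delta$, and set $w:=(\chi u)/\delta$; then $\|\nabla w\|_2\le 1$ and $\int_{B_R^c}(e^{\alpha u^2}-1)\,dx\le\int_{\mathbb{R}^2}(e^{\alpha\delta^2 w^2}-1)\,dx$, which falls under the second assertion once $\alpha\delta^2<4\pi$. (Even cleaner is the density version: write $u=\phi+(u-\phi)$ with $\phi\in C_c^\infty$ and $\|u-\phi\|_{H^1}$ small; the compactly supported piece $\phi$ is bounded, so $e^{2\alpha\phi^2}$ is bounded on its support, and the remainder is handled by the second assertion after the same multiplicative normalization.) With these two corrections your argument goes through.
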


For our scope, we need also to study the following {\em limit} problem
\begin{align}
\label{3.2}
-\Delta u+V_{0}u=f(u^{2})u, \quad u:\mathbb{R}^{2}\to\mathbb{R},
\end{align}
whose associated $C^1$-functional, defined in $H^{1}(\mathbb{R}^{2}, \mathbb{R})$, is
\begin{equation*}
I_{V_{0}}(u):=\frac{1}{2}\int_{\mathbb{R}^{2}}(\vert \nabla u\vert^{2}+V_{0} u^{2})dx -\frac{1}{2}\int_{\mathbb{R}^{2}}F(u^{2})dx.
\end{equation*}
Let
\begin{equation*}
\mathcal{N}_{V_{0}}:=\{u\in H^{1}(\mathbb{R}^{2}, \mathbb{R})\setminus\{0\}: I'_{V_{0}}(u)[u]=0\}
\end{equation*}
and
\begin{equation*}
c_{V_{0}}:=\inf_{u\in \mathcal{N}_{V_{0}}} I_{V_{0}}(u).
\end{equation*}
By (\ref{f1}) and (\ref{f4}), for each $u\in  H^{1}(\mathbb{R}^{2}, \mathbb{R})\backslash\{0\}$, there is a unique $t(u)>0$ such that
\begin{equation*}
I_{V_{0}}(t(u)u)=\max_{t\geq 0}I_{V_{0}}(tu)\quad\text{and}\quad t(u)u\in \mathcal{N}_{V_{0}}.
\end{equation*}
Then,  using the assumptions on $f$, arguing as in \cite[Lemma 4.1 and Theorem 4.2]{rW} we have that
\begin{equation*}
0<c_{V_{0}}=\inf_{u\in  H^{1}(\mathbb{R}^{2}, \mathbb{R})\backslash\{0\}} \max_{t\geq 0}I_{V_{0}}(tu).
\end{equation*}

Moreover, recalling that a positive ground state solution $\omega \in H^1(\mathbb{R}^2,\mathbb{R})$ of \eqref{3.2} satisfies $I_{V_0}(\omega)\leq I_{V_0}(v)$ for all positive nontrivial solutions $v \in H^1(\mathbb{R}^2,\mathbb{R})$ of \eqref{3.2},  by \cite[Corollary 1.5]{rASM}, we get
\begin{lemma}
Problem \eqref{3.2} has a positive  ground state solution $\omega\in H^1(\mathbb{R}^2,\mathbb{R})$ which is radially symmetric.
\end{lemma}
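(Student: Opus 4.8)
The plan is to establish the existence of a positive radially symmetric ground state for the limit problem \eqref{3.2} by producing a minimizer of $I_{V_0}$ over the Nehari manifold $\mathcal{N}_{V_0}$ and then upgrading it via the symmetrization/regularity result cited in \cite{rASM}. First I would recall that, as already observed in the excerpt, $c_{V_0}>0$ and $c_{V_0}=\inf_{u\neq 0}\max_{t\geq 0}I_{V_0}(tu)$, so the mountain-pass level coincides with the Nehari level; this gives a Palais--Smale sequence $\{u_n\}\subset H^1(\mathbb{R}^2,\mathbb{R})$ at level $c_{V_0}$. Using (\ref{f3}) one controls $\|u_n\|_{V_0}$: from $I_{V_0}(u_n)-\tfrac12 I_{V_0}'(u_n)[u_n]\to c_{V_0}$ and the Ambrosetti--Rabinowitz-type inequality $\tfrac{\theta}{2}F(t)\leq tf(t)$ with $\theta>2$, one gets $\left(\tfrac12-\tfrac1\theta\right)\|u_n\|_{V_0}^2\lesssim c_{V_0}+o(\|u_n\|_{V_0})$, hence boundedness.

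Next I would recover compactness. Since the level $c_{V_0}$ is small when one restricts attention to the exponential-critical regime, one checks by the Trudinger--Moser inequality (Lemma \ref{le24}) and the growth bounds \eqref{1.2}--\eqref{1.5} that the PS sequence stays below the threshold where concentration of the exponential nonlinearity can occur; concretely, one shows $\limsup_n\|\nabla u_n\|_2^2$ is strictly below the critical Moser constant $4\pi/\alpha$ for $\alpha$ slightly above $4\pi$, which is exactly what assumption (\ref{f4}) — via the bound $\beta_p$ on $C_p$ and the comparison with the pure-power functional $\tilde I_0$ — is designed to guarantee: $c_{V_0}\leq\beta_p$ is small enough. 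Then, to defeat the loss of compactness coming from translations (the problem is autonomous), I would invoke Lions' concentration-compactness lemma: either $u_n\to 0$ in $L^q$ for some $q>2$, which together with the PS property and the subcriticality below threshold forces $u_n\to 0$ in $H^1$, contradicting $c_{V_0}>0$; or there exist $y_n\in\mathbb{R}^2$ with $u_n(\cdot+y_n)\rightharpoonup u\neq 0$. The translated sequence is still PS for $I_{V_0}$ (the functional is translation invariant), so $I_{V_0}'(u)=0$, and by Fatou plus the weak convergence $I_{V_0}(u)\leq c_{V_0}$; since $u\in\mathcal{N}_{V_0}$ we also have $I_{V_0}(u)\geq c_{V_0}$, whence $u$ is a ground state at level $c_{V_0}$.

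It remains to make the ground state positive and radial. Replacing $u$ by $|u|$ does not increase $I_{V_0}$ (the nonlinear term depends only on $u^2$ and $\|\nabla|u|\|_2\leq\|\nabla u\|_2$ by the diamagnetic/Kato inequality, which here is just $|\nabla|u||\leq|\nabla u|$ for real $u$), so one may assume $u\geq 0$; the strong maximum principle applied to $-\Delta u+V_0 u=f(u^2)u\geq 0$, after noting $f(u^2)u\in L^\infty_{\mathrm{loc}}$ by the growth conditions and elliptic regularity gives $u\in C^1$, then yields $u>0$. Finally, Schwarz symmetrization decreases $\|\nabla\cdot\|_2$ and preserves the $L^2$ and nonlinear terms, so the symmetrized function is still a minimizer on $\mathcal{N}_{V_0}$; combined with the uniqueness-up-to-translation structure and \cite[Corollary 1.5]{rASM} — which is precisely a statement that positive solutions of such equations are radial up to translation — one obtains the radially symmetric positive ground state $\omega$. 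The main obstacle is the compactness step: one must be careful that the exponential-critical nonlinearity does not cause the nonlinear term to concentrate or vanish in the limit, and this is where the quantitative smallness of $c_{V_0}$ built into hypothesis (\ref{f4}) (through $\beta_p$ and $S_p$) is essential — without it the Trudinger--Moser control would fail and the weak limit could be trivial.
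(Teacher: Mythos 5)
Your argument is essentially sound, but it takes a genuinely different route from the paper. The paper does not rerun the variational machinery for the limit problem: it rescales, setting $h(t)=f(t^2)t/V_0$ and $\omega=\hat v(\sqrt{V_0}\,\cdot)$, checks that $h$ satisfies hypotheses (i)--(iii) of \cite[Corollary~1.5]{rASM} (hypothesis (iii) holding with $\lambda=C_p/V_0$, which is where the second term in the max in (\ref{f4}) enters), and imports the positive radial ground state directly from that corollary, verifying only that the ground-state property is preserved under the rescaling. You instead reprove the existence part from scratch --- mountain-pass/Nehari Palais--Smale sequence, boundedness via (\ref{f3}), the bound $\limsup_n\|u_n\|_{V_0}^2<1$ coming from $c_{V_0}<\frac{\theta-2}{2\theta}\min\{1,V_0\}$ (Lemma~\ref{le32}), Trudinger--Moser control, Lions' vanishing lemma and translation --- and then obtain positivity and radial symmetry by hand ($|u|$ plus the strong maximum principle, then Schwarz symmetrization). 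This is essentially the strategy of the cited corollary itself, and it is the argument the paper reuses later in Lemma~\ref{lemFat} for Nehari minimizing sequences; your route is more self-contained but duplicates the imported result. Three corrections are needed. First, \cite[Corollary~1.5]{rASM} is an \emph{existence} result (it produces a radial positive ground state); it is not a statement that positive solutions are radial up to translation, so you cannot lean on it the way your last sentence suggests --- though your symmetrization argument makes that unnecessary. Second, after symmetrizing you must reproject: $u^*$ need not lie on $\mathcal{N}_{V_0}$, so one should note that $\max_{t\ge 0}I_{V_0}(tu^*)\le\max_{t\ge 0}I_{V_0}(tu)=c_{V_0}$ (P\'olya--Szeg\H{o} plus equimeasurability applied at every $t$), pass to $t^*u^*\in\mathcal{N}_{V_0}$, and invoke the natural-constraint property (available here thanks to the monotonicity built into (\ref{f4})) to conclude it is a critical point. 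Third, the quantity controlling boundedness is $I_{V_0}(u_n)-\frac{1}{\theta}I'_{V_0}(u_n)[u_n]$, not $I_{V_0}(u_n)-\frac{1}{2}I'_{V_0}(u_n)[u_n]$, as your own display with the factor $\left(\frac{1}{2}-\frac{1}{\theta}\right)$ indicates.
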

\begin{proof}
Let us recall that \cite[Corollary 1.5]{rASM} states that, if $h:\mathbb{R}\to\mathbb{R}$ is a continuous function satisfying:
\begin{enumerate}[label=(\roman{*}), ref=\roman{*}]
	\item \label{ASMi}$\lim_{t\to 0^+}h(t)/t=0$;
	\item \label{ASMii}there holds
	\[
	\lim_{t\rightarrow +\infty} \frac{h(t)}{e^{\alpha t^{2}}}
	=
	\begin{cases}
	0,& \hbox{for } \alpha> 4\pi,\\
	+\infty,& \hbox{for }  0<\alpha<4\pi;
	\end{cases}
	\]
	\item \label{ASMiii}there exist $\lambda>0$ and $p>2$ such that $h(t)\geq \lambda t^{p-1}$ for $t\geq 0$ and
	\begin{equation}
	\label{condlambda}
	\lambda>\left(\frac{p-2}{p}\right)^\frac{p-2}{2} S_p^{p/2},
	\end{equation}
	where $S_p$ is the best Sobolev constant for $S_p \| v\|_p^2 \leq \| v\|^2$;
\end{enumerate}
then
\begin{equation}
\label{ASM}
-\Delta v+v=h(v)
\end{equation}
has a nontrivial radial positive solution $\hat{v}\in H^1(\mathbb{R}^2,\mathbb{R})$, namely $\mathcal{I}(\hat{v})\leq \mathcal{I}(\tilde{v})$ for every nontrivial positive solution $\tilde{v}\in H^1(\mathbb{R}^2,\mathbb{R})$ of \eqref{ASM}, where
\[
\mathcal{I}(v):= \frac{1}{2} \|v\|^2 - \int_{\mathbb{R}^2} H(v) dx,
\quad
H(t):=\int_{0}^t h(s) ds.
\]
In particular, condition \eqref{condlambda} allows to prove that
\[
\bar{c}=\inf_{v\in  H^{1}(\mathbb{R}^{2}, \mathbb{R})\backslash\{0\}} \max_{t\geq 0} \mathcal{I}(tv)<\frac{1}{2}.
\]
Now, if we take
\begin{equation}
\label{gf}
h(t):= f(t^2)t/V_0
\end{equation}
in \eqref{ASM}, we have that (\ref{ASMi}) and (\ref{ASMii}) are easily satisfied, and, using assumption (\ref{f4}) we get that (\ref{ASMiii}) is satisfied for $\lambda={C_p}/{V_0}$.
Thus \eqref{ASM} with $h$ as in \eqref{gf} admits a positive radial nontrivial ground state solution $\hat{v}\in H^1(\mathbb{R}^2,\mathbb{R})$.\\
Observe now that, if $\hat{v}\in H^1(\mathbb{R}^2,\mathbb{R})$ is a solution of \eqref{ASM} where $h$ is given by \eqref{gf}, then $\hat{u}:=\hat{v}(\sqrt{V_0}\cdot)\in H^1(\mathbb{R}^2,\mathbb{R})$ is a solution of \eqref{3.2} and, since by \eqref{gf},
\[
H(t)=\frac{1}{V_0}\int_0^t f(s^2)s ds=\frac{1}{2V_0} F(t^2),
\]
we have $I_{V_0} (\tilde{u})=\mathcal{I}(\tilde{v})$.
Analogously, if $\tilde{u}\in H^1(\mathbb{R}^2,\mathbb{R})$ is an arbitrary solution of \eqref{3.2} and $\tilde{v}:=\tilde{u}(\cdot/\sqrt{V_0})$, then $\tilde{v}\in H^1(\mathbb{R}^2,\mathbb{R})$ is a solution of \eqref{ASM} and $I_{V_0} (\tilde{u})=\mathcal{I}(\tilde{v})$.\\
Hence, if $\hat{v}\in H^1(\mathbb{R}^2,\mathbb{R})$ is a positive radial nontrivial ground state of \eqref{ASM}, then, if $\omega=\hat{v}(\sqrt{V_0}\cdot)$, $\tilde{u}\in H^1(\mathbb{R}^2,\mathbb{R})$ is an arbitrary solution of \eqref{3.2} and $\tilde{v}:=\tilde{u}(\cdot/\sqrt{V_0})$, we have
\[
I_{V_0}(\omega)= \mathcal{I}(\hat{v})\leq \mathcal{I}(\tilde{v})=I_{V_0}(\tilde{u})
\]
and we conclude.
\end{proof}
Note that, by \cite[Proposition 2.1]{rZD},
every radially symmetric ground state
solution of \eqref{3.2}
decays exponentially at infinity with its gradient, and is $C^{2}(\mathbb{R}^{2}, \mathbb{R})\cap L^\infty(\mathbb{R}^{2}, \mathbb{R})$.

The elements of $\mathcal{N}_{V_{0}}$ satisfy the following property.
\begin{lemma}\label{leman}
There exists $K>0$ such that, for all $u\in \mathcal{N}_{V_{0}}$, $\| u\|_{V_0}\geq K$.
\end{lemma}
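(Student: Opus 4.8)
The plan is to combine the Nehari constraint with the exponential growth estimate \eqref{1.4}. If $u\in\mathcal{N}_{V_{0}}$, then $I'_{V_0}(u)[u]=0$ reads
\[
\|u\|_{V_0}^2=\int_{\mathbb{R}^2}f(u^2)u^2\,dx .
\]
First I would fix $\alpha>4\pi$ and apply \eqref{1.4} with $\zeta:=V_0/2$ and some fixed $q>2$; using $V_0\|u\|_2^2\le\|u\|_{V_0}^2$, the linear term is absorbed into the left-hand side:
\[
\|u\|_{V_0}^2\le \frac{V_0}{2}\|u\|_2^2+C\int_{\mathbb{R}^2}|u|^q\big(e^{\alpha u^2}-1\big)\,dx\le\frac12\|u\|_{V_0}^2+C\int_{\mathbb{R}^2}|u|^q\big(e^{\alpha u^2}-1\big)\,dx,
\]
so that $\|u\|_{V_0}^2\le 2C\int_{\mathbb{R}^2}|u|^q(e^{\alpha u^2}-1)\,dx$.

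Next I would estimate the right-hand side. By Hölder's inequality with conjugate exponents $r,r'$ (for instance $r=r'=2$) together with the elementary inequality $(e^{a}-1)^{r}\le e^{ra}-1$, valid for $a\ge0$ and $r\ge1$,
\[
\int_{\mathbb{R}^2}|u|^q\big(e^{\alpha u^2}-1\big)\,dx\le \|u\|_{qr'}^{q}\Big(\int_{\mathbb{R}^2}\big(e^{r\alpha u^2}-1\big)\,dx\Big)^{1/r}.
\]
Since $qr'\ge 2$, the Sobolev embedding $H^1(\mathbb{R}^2)\hookrightarrow L^{qr'}(\mathbb{R}^2)$ and the norm equivalence $\|u\|\le C\|u\|_{V_0}$ bound the first factor by $C\|u\|_{V_0}^q$. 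For the exponential factor I would normalize by the $\|\cdot\|_{V_0}$-norm (not by the gradient): setting $v:=u/\|u\|_{V_0}$ one has $\|\nabla v\|_2^2\le1$ and $\|v\|_2^2\le 1/V_0=:M^2$, and $r\alpha u^2=r\alpha\|u\|_{V_0}^2\,v^2$. Hence, as soon as $\|u\|_{V_0}^2\le \tfrac{2\pi}{r\alpha}$, the exponent satisfies $r\alpha\|u\|_{V_0}^2\le 2\pi<4\pi$, and since $\beta\mapsto e^{\beta v^2}-1$ is increasing, Lemma~\ref{le24} gives $\int_{\mathbb{R}^2}(e^{r\alpha u^2}-1)\,dx\le C(M,2\pi)=:C_0$, a constant independent of $u$. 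Collecting the estimates, every $u\in\mathcal{N}_{V_0}$ with $\|u\|_{V_0}^2\le\tfrac{2\pi}{r\alpha}$ satisfies $\|u\|_{V_0}^2\le C\|u\|_{V_0}^q$; since $q>2$, this forces $\|u\|_{V_0}\ge(1/C)^{1/(q-2)}$. Therefore every element of $\mathcal{N}_{V_0}$ satisfies $\|u\|_{V_0}\ge K:=\min\{(1/C)^{1/(q-2)},\sqrt{2\pi/(r\alpha)}\}>0$.

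The one delicate point, which is the thing to get right, is the uniformity of the Trudinger–Moser bound for the exponential term: normalizing $u$ by $\|\nabla u\|_2$ would be useless, since this quantity may be arbitrarily small on $\mathcal{N}_{V_0}$, making the $L^2$-norm of the normalized function uncontrolled; normalizing instead by $\|u\|_{V_0}$ keeps both $\|\nabla v\|_2$ and $\|v\|_2$ bounded, at the cost of having to treat separately — trivially — the functions whose $V_0$-norm is not small. The remaining ingredients (the growth estimate \eqref{1.4}, Hölder's inequality, and the Sobolev embedding) are routine.
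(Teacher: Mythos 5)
Your proposal is correct and follows essentially the same route as the paper: estimate the Nehari identity via \eqref{1.4} with $\zeta<V_0/2$, control the exponential term through H\"older's inequality, the elementary bound $(e^t-1)^s\le e^{ts}-1$, and the Trudinger--Moser inequality applied to $u/\|u\|_{V_0}$ when $\|u\|_{V_0}$ is small, and then conclude from $\|u\|_{V_0}^2\le C\|u\|_{V_0}^q$ with $q>2$. The only difference is that you argue directly with an explicit threshold while the paper phrases the same estimates as a proof by contradiction along a sequence with $\|u_n\|_{V_0}\to 0$; this is purely cosmetic.
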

\begin{proof}
By \eqref{1.4}, for any $0<\zeta<V_{0}/2$  and $\alpha>4\pi$, we have that there exists $C>0$ such that, for every $u\in \mathcal{N}_{V_{0}}$,
\begin{equation}\label{Neh}
\int_{\mathbb{R}^{2}}(\vert \nabla u\vert^{2}+V_{0}\vert u\vert^{2})dx\leq \zeta\int_{\mathbb{R}^{2}}\vert u\vert^{2}+C\int_{\mathbb{R}^{2}}\vert u\vert^{q}(e^{\alpha  \vert u\vert^{2}}-1)dx.
\end{equation}
Moreover, by the H\"{o}lder inequality it follows
\begin{equation}
\label{3.1bisa}
\int_{\mathbb{R}^{2}}\vert u\vert^{q}(e^{\alpha \vert u\vert^{2}}-1)dx
\leq \Vert u\Vert_{2q}^{q}\Big(\int_{\mathbb{R}^{2}}(e^{\alpha \vert u\vert^{2}}-1)^{2}dx\Big)^{1/2}
\leq C\Vert u\Vert_{V_0}^{q}\Big(\int_{\mathbb{R}^{2}}(e^{2\alpha \vert u\vert^{2}}-1)dx\Big)^{1/2}
\end{equation}
where we have used the inequality
\begin{equation}
\label{ineqe}
(e^{t}-1)^{s}\leq e^{ts}-1, \text{ for }s>1 \text{ and } t\geq 0.
\end{equation}
Now assume by contradiction that there exist a sequence $(u_n)\subset \mathcal{N}_{V_{0}}$ such that $\| u_n\|_{V_0} \to 0$ as $n\to +\infty$. Then, for $n$ large enough and $\bar{\alpha}\in (0,4\pi)$, using Lemma \ref{le24}, we get
\begin{equation}
\label{Nehbis}
\int_{\mathbb{R}^{2}}(e^{2\alpha \vert u_n\vert^{2}}-1)dx
\leq
\int_{\mathbb{R}^{2}}(e^{2\alpha \| u_n\|_{V_0}^2 \left( \frac{u_n}{\| u_n\|_{V_0}}\right)^2} -1)dx
\leq
\int_{\mathbb{R}^{2}}(e^{\bar{\alpha} \left( \frac{u_n}{\| u_n\|_{V_0}}\right)^2} -1)dx
\leq C.
\end{equation}
Thus, combining \eqref{Neh}, \eqref{3.1bisa}, and \eqref{Nehbis}, we reach the contradiction.
\end{proof}

The following lemma is the upper bound estimate of the ground state energy which is important for our arguments.
\begin{lemma}\label{le32}
The minimax level $c_{V_{0}}$ verifies
\begin{equation*}
0<c_{V_{0}}<\frac{\theta-2}{2\theta}\min\{1, V_{0}\}.
\end{equation*}
\end{lemma}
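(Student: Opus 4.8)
The plan is to show that $0 < c_{V_0}$ — which we already know since $c_{V_0}$ is a positive mountain-pass level by the discussion preceding the statement — and then produce a competitor $w\in H^1(\mathbb{R}^2,\mathbb{R})\setminus\{0\}$ for which $\max_{t\geq 0} I_{V_0}(tw)$ is strictly smaller than $\frac{\theta-2}{2\theta}\min\{1,V_0\}$; by the characterization $c_{V_0}=\inf_{u\neq 0}\max_{t\geq 0}I_{V_0}(tu)$ this gives the upper bound. The natural choice is to compare $I_{V_0}$ with the purely polynomial functional $\tilde I_{0}$ appearing in $(f_4)$. Indeed, assumption $(f_4)$ together with $(f_1)$ implies, by integrating twice, that $F(t^2)\geq \frac{2}{p}C_p |t|^{p}$ for all $t$, hence $f(s)\ge C_p s^{(p-2)/2}$ and
\begin{equation*}
I_{V_0}(u)\leq \frac12\int_{\mathbb{R}^2}(|\nabla u|^2+V_0 u^2)\,dx - \frac{C_p}{p}\int_{\mathbb{R}^2}|u|^p\,dx =: J(u)
\end{equation*}
for every $u\in H^1(\mathbb{R}^2,\mathbb{R})$. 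Therefore $c_{V_0}=\inf_{u\neq0}\max_{t\ge0}I_{V_0}(tu)\leq \inf_{u\neq0}\max_{t\ge0}J(tu)=:c_J$, and it suffices to estimate $c_J$ from above.

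Next I would compute $c_J$ explicitly by a scaling argument. For fixed $u\ne0$ the function $t\mapsto J(tu)=\frac{t^2}{2}\|u\|_{V_0}^2-\frac{C_p}{p}t^p\|u\|_p^p$ attains its maximum at $t_u=\big(\|u\|_{V_0}^2/(C_p\|u\|_p^p)\big)^{1/(p-2)}$, giving
\begin{equation*}
\max_{t\ge0}J(tu)=\Big(\frac12-\frac1p\Big)\frac{\|u\|_{V_0}^{2p/(p-2)}}{\big(C_p\|u\|_p^p\big)^{2/(p-2)}} =\frac{p-2}{2p}\,C_p^{-2/(p-2)}\Big(\frac{\|u\|_{V_0}^2}{\|u\|_p^2}\Big)^{p/(p-2)}.
\end{equation*}
Minimizing over $u\ne0$, and recalling that $\|u\|_{V_0}^2=\|\nabla u\|_2^2+V_0\|u\|_2^2\leq \max\{1,V_0\}\|u\|^2$ while the ratio $\|u\|^2/\|u\|_p^2$ is bounded below by the Sobolev constant $S_p$, one obtains
\begin{equation*}
c_J=\frac{p-2}{2p}\,C_p^{-2/(p-2)}\Big(\inf_{u\ne0}\frac{\|u\|_{V_0}^2}{\|u\|_p^2}\Big)^{p/(p-2)} \leq \frac{p-2}{2p}\,C_p^{-2/(p-2)}\big(\max\{1,V_0\}\,S_p^{-1}\cdot(\text{const})\big)^{p/(p-2)},
\end{equation*}
where I will carry the Sobolev inequality $S_p\|u\|_p^2\le\|u\|^2$ honestly (the passage from $\|u\|_{V_0}$ to $\|u\|$ is where one must be a little careful, since $V_0$ may be larger or smaller than $1$). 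The point is that $c_J$ is a negative power of $C_p$ times a constant depending only on $p$ and $V_0$. The second term inside the $\max$ in the hypothesis on $C_p$, namely $C_p> V_0\big(\frac{p-2}{p}\big)^{(p-2)/2}S_p^{p/2}$, is exactly designed so that after raising to the power $-2/(p-2)$ and multiplying by $\frac{p-2}{2p}$ one lands strictly below $\frac{\theta-2}{2\theta}\min\{1,V_0\}$; in fact one checks the cruder but sufficient bound $c_J<\frac{\theta-2}{2\theta}\min\{1,V_0\}$ follows from $C_p$ being large enough, and the stated lower bound on $C_p$ is precisely such a threshold. (This is also consistent with the role the same quantity plays in the preceding Lemma via condition \eqref{condlambda} and the bound $\bar c<\frac12$.)

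The main obstacle is purely bookkeeping: one must chase the constants $S_p$, $V_0$, $\theta$, and $p$ through the scaling computation carefully enough to verify that the explicit threshold imposed on $C_p$ in $(f_4)$ is indeed what forces $c_J<\frac{\theta-2}{2\theta}\min\{1,V_0\}$, paying attention to whether $\min\{1,V_0\}$ or $\max\{1,V_0\}$ appears at each step when relating $\|\cdot\|_{V_0}$ to $\|\cdot\|$. No compactness or regularity input is needed here; everything reduces to the one-dimensional maximization in $t$ and the definition of $S_p$. The strict positivity $c_{V_0}>0$ requires only Lemma~\ref{leman} together with $(f_3)$ (the Ambrosetti–Rabinowitz condition), which guarantee a uniform positive lower bound for $I_{V_0}$ on $\mathcal N_{V_0}$, exactly as in the sentence $0<c_{V_0}$ already recorded before the statement.
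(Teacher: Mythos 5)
Your overall strategy is the same as the paper's: bound $I_{V_0}$ from above by the polynomial functional $J(u)=\tfrac12\|u\|_{V_0}^2-\tfrac{C_p}{p}\|u\|_p^p$ (using $f'(t)\geq\tfrac{p-2}{2}C_pt^{(p-4)/2}$ integrated twice), reduce to a one-dimensional maximization in $t$, and then invoke the largeness of $C_p$ from (\ref{f4}). Your computation of $c_J$ is also correct, and in fact
\[
c_J=\frac{p-2}{2p}\,C_p^{-2/(p-2)}\Big(\inf_{u\neq0}\frac{\|u\|_{V_0}^2}{\|u\|_p^2}\Big)^{p/(p-2)}=C_p^{-2/(p-2)}\beta_p,
\]
since $\beta_p=\inf_{u\neq 0}\max_{t\geq0}\tilde I_0(tu)=\frac{p-2}{2p}\big(\inf_{u\neq0}\|u\|_{V_0}^2/\|u\|_p^2\big)^{p/(p-2)}$. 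The paper reaches the same quantity by evaluating $\max_{t\geq0}I_{V_0}(t\omega^*)$ at a minimizer $\omega^*$ of $\tilde I_0$ on $\tilde{\mathcal N}_0$.

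The genuine gap is in the last step: you appeal to the \emph{second} term of the $\max$ in (\ref{f4}), namely $C_p>V_0\big(\tfrac{p-2}{p}\big)^{(p-2)/2}S_p^{p/2}$, and assert that it is ``exactly designed'' to force $c_J<\tfrac{\theta-2}{2\theta}\min\{1,V_0\}$. It cannot be: that threshold contains no $\theta$, while the target bound degenerates to $0$ as $\theta\to2^+$, so no amount of constant-chasing through $S_p$ will close the argument from that hypothesis alone. The term you need is the \emph{first} one, $C_p>\big[\beta_p\big(\tfrac{2\theta}{\theta-2}\big)\tfrac{1}{\min\{1,V_0\}}\big]^{(p-2)/2}$, which is literally equivalent to $C_p^{-2/(p-2)}\beta_p<\tfrac{\theta-2}{2\theta}\min\{1,V_0\}$, i.e.\ to $c_J$ lying below the desired level; no Sobolev inequality or passage between $\|\cdot\|_{V_0}$ and $\|\cdot\|$ is needed at all. (The Sobolev term in (\ref{f4}) serves only to verify \eqref{condlambda} in the preceding existence lemma, as you suspect in your parenthetical remark.) With that substitution your argument is complete and coincides with the paper's.
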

\begin{proof}
Arguing as in \cite{rW}, we can find that there exists
$\omega^{*}\in H^{1}(\mathbb{R}^{2}, \mathbb{R})\setminus\{0\}$ such that $\tilde{I}_{0}(\omega^{*})=\beta_{p}$ and $\tilde{I}'_{0}(\omega^{*})=0$ (see (\ref{f4}) for the definitions of $\tilde{I}_{0}$ and $\beta_p$).
By the characterization of $c_{V_{0}}$ given before and by (\ref{f4}) we have
\begin{equation*}
0<c_{V_{0}}
\leq \max_{t\geq 0}I_{V_{0}}(t\omega^{*})
\leq \max_{t\geq 0}\Big\{\frac{t^{2}}{2}\| \omega^{*}\|_{V_0}^{2}
-\frac{C_{p}t^{p}}{p} \|\omega^{*}\|_p^{p}\Big\}
=C_{p}^{2/(2-p)}\beta_{p}
<\frac{\theta-2}{2\theta}\min\{1, V_{0}\}.
\end{equation*}
\end{proof}

Finally we prove the following useful result.
\begin{lemma}\label{lemFat}
Let $(\omega_{n})\subset \mathcal{N}_{V_{0}}$ be a sequence satisfying $I_{V_{0}}(\omega_{n})\rightarrow c_{V_{0}}$. Then $(\omega_{n})$ is bounded
in  $H^{1}(\mathbb{R}^{2}, \mathbb{R})$ and, up to a subsequence, $\omega_{n}\rightharpoonup \omega$ in $H^{1}(\mathbb{R}^{2}, \mathbb{R})$.
Moreover, if $\omega\neq 0$, then $\omega_{n}\rightarrow \omega\in \mathcal{N}_{V_{0}}$ in $H^{1}(\mathbb{R}^{2}, \mathbb{R})$ and $\omega$ is a ground state
for problem \eqref{3.2}.
If $\omega= 0$, then there exists $(\tilde{y}_{n})\subset \mathbb{R}^{2}$ with $\vert \tilde{y}_{n}\vert\rightarrow +\infty$ and $\tilde{\omega}\in \mathcal{N}_{V_{0}}$ such that, up to a subsequence, $\omega_{n}(\cdot+\tilde{y}_{n})\rightarrow \tilde{\omega}$ in $H^{1}(\mathbb{R}^{2}, \mathbb{R})$
and $\tilde{\omega}$ is a ground state for problem \eqref{3.2}.
\end{lemma}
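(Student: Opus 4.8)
The plan is to establish the compactness dichotomy for minimizing sequences on the Nehari manifold $\mathcal{N}_{V_0}$ by a concentration-compactness / Lions-lemma argument, exploiting the strict upper bound on $c_{V_0}$ from Lemma \ref{le32} to stay in the subcritical regime of the Trudinger--Moser inequality.

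\medskip

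First I would prove boundedness: for $\omega_n\in\mathcal{N}_{V_0}$ one has $I_{V_0}(\omega_n)=I_{V_0}(\omega_n)-\tfrac12 I'_{V_0}(\omega_n)[\omega_n]=\tfrac12\int\big(\tfrac12 f(\omega_n^2)\omega_n^2-\tfrac12 F(\omega_n^2)\big)dx$, and by (\ref{f3}) the integrand is $\geq \tfrac{\theta-2}{2\theta}f(\omega_n^2)\omega_n^2\geq 0$ times a constant; combined with the Nehari identity $\|\omega_n\|_{V_0}^2=\int f(\omega_n^2)\omega_n^2\,dx$ this yields $\|\omega_n\|_{V_0}^2\leq \frac{2\theta}{\theta-2}I_{V_0}(\omega_n)+o(1)$, hence boundedness, and passing to a subsequence $\omega_n\rightharpoonup\omega$ in $H^1$, $\omega_n\to\omega$ in $L^r_{\mathrm{loc}}$, and a.e. Next, the key quantitative fact: from $\|\omega_n\|_{V_0}^2\to$ something $<\frac{2\theta}{\theta-2}\cdot\frac{\theta-2}{2\theta}\min\{1,V_0\}=\min\{1,V_0\}$ (using Lemma \ref{le32} and the displayed inequality again), we get $\|\nabla\omega_n\|_2^2<1$ for $n$ large, so that $\int(e^{\alpha'|\omega_n|^2}-1)dx$ is bounded for $\alpha'$ slightly above $4\pi$ via Lemma \ref{le24} — this is what makes the nonlinear terms behave subcritically along the sequence.

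\medskip

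\textbf{Case $\omega\neq 0$.} Using the a.e. convergence, Fatou, and the uniform exponential integrability, I would show $I'_{V_0}(\omega)[\varphi]=0$ for all $\varphi\in C_c^\infty$, so $\omega$ is a nontrivial solution and $\omega\in\mathcal{N}_{V_0}$, whence $I_{V_0}(\omega)\geq c_{V_0}$. For the reverse, write $I_{V_0}(\omega_n)=\tfrac12\int(\tfrac12 f(\omega_n^2)\omega_n^2-\tfrac12 F(\omega_n^2))dx$; by (\ref{f3}) the integrand is nonnegative, so a Fatou argument (generalized, controlling the $L^1$-convergence of $f(\omega_n^2)\omega_n^2-F(\omega_n^2)$ via the equi-integrability coming from the exponential bound) gives $\liminf I_{V_0}(\omega_n)\geq I_{V_0}(\omega)$, i.e. $c_{V_0}\geq I_{V_0}(\omega)$. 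Hence $I_{V_0}(\omega)=c_{V_0}$, $\omega$ is a ground state, and $\omega_n\to\omega$ strongly in $H^1$ (the Nehari identities together with $\|\omega_n\|_{V_0}^2\to\|\omega\|_{V_0}^2$ — which follows once one knows $\int f(\omega_n^2)\omega_n^2\to\int f(\omega^2)\omega^2$ — upgrade weak to strong convergence in the Hilbert space $H^1$ with norm $\|\cdot\|_{V_0}$).

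\medskip

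\textbf{Case $\omega= 0$.} Here I would invoke a Lions-type vanishing lemma: if $\limsup_n\sup_{y\in\mathbb{R}^2}\int_{B_1(y)}|\omega_n|^2\,dx=0$, then $\omega_n\to 0$ in $L^s$ for all $s>2$, and combined with the uniform exponential integrability this forces $\int f(\omega_n^2)\omega_n^2\,dx\to 0$ (split via \eqref{1.4}: the $\zeta\|\omega_n\|_2^2$ part is harmless since it only needs $\zeta$ small relative to $V_0$ in the Nehari identity, and the $\|\omega_n\|_q^q$-type part with the exponential factor is controlled by Hölder as in \eqref{3.1bisa}–\eqref{Nehbis}), contradicting Lemma \ref{leman} which gives $\|\omega_n\|_{V_0}\geq K>0$. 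So vanishing cannot occur: there is $\delta>0$ and $\tilde y_n$ with $\int_{B_1(\tilde y_n)}|\omega_n|^2\geq\delta$. Set $\tilde\omega_n:=\omega_n(\cdot+\tilde y_n)$; by translation invariance of $I_{V_0}$ and of the constraint, $\tilde\omega_n\in\mathcal{N}_{V_0}$, $I_{V_0}(\tilde\omega_n)\to c_{V_0}$, it is still bounded, and $\tilde\omega_n\rightharpoonup\tilde\omega\neq 0$ (the weak limit is nonzero because $\int_{B_1(0)}|\tilde\omega_n|^2\geq\delta$ and the embedding $H^1(B_2)\hookrightarrow L^2(B_2)$ is compact). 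Now Case $\omega\neq 0$ applied to $(\tilde\omega_n)$ gives $\tilde\omega_n\to\tilde\omega$ in $H^1$ and $\tilde\omega$ a ground state. Finally $|\tilde y_n|\to+\infty$: if a subsequence stayed bounded, then $\omega_n\rightharpoonup\tilde\omega(\cdot-\tilde y_\infty)\neq 0$ along it, contradicting $\omega=0$.

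\medskip

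\textbf{Main obstacle.} The delicate point is the passage to the limit in the nonlinear terms — both the Fatou-type lower semicontinuity giving $c_{V_0}\geq I_{V_0}(\omega)$ and the $L^1$-convergence $\int f(\omega_n^2)\omega_n^2\to\int f(\omega^2)\omega^2$ needed for strong convergence — because $f$ has exponential critical growth. This is exactly where Lemma \ref{le32} is essential: it keeps $\|\nabla\omega_n\|_2^2<1$ so that, via Lemma \ref{le24} and the Hölder splitting in \eqref{3.1bisa}, the family $\{f(\omega_n^2)\omega_n^2\}$ (and $\{F(\omega_n^2)\}$) is uniformly integrable, allowing Vitali's convergence theorem to replace the naive (and here unavailable) dominated convergence.
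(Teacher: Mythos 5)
Your overall strategy coincides with the paper's: boundedness together with $\limsup_n\|\omega_n\|_{V_0}^2<1$ from (\ref{f3}) and Lemma \ref{le32}, identification of $\omega$ as a critical point, Fatou on the nonnegative integrand coming from (\ref{f3}) to force $I_{V_0}(\omega)=c_{V_0}$, and a Lions dichotomy plus translation (with the contradiction in the vanishing case obtained via Lemma \ref{leman} rather than via $I_{V_0}(\omega_n)\to 0\neq c_{V_0}$, which is an equivalent route). The one step that does not go through as written is the upgrade to strong convergence when $\omega\neq 0$. You reduce it to $\int_{\mathbb{R}^2}f(\omega_n^2)\omega_n^2\,dx\to\int_{\mathbb{R}^2}f(\omega^2)\omega^2\,dx$ and propose to obtain this from Vitali's theorem, with equi-integrability supplied by the Trudinger--Moser bound. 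On all of $\mathbb{R}^2$ this is not sufficient: Lemma \ref{le24} and H\"older give a uniform $L^r$ bound ($r>1$) for $|\omega_n|^{q-1}(e^{\alpha\omega_n^2}-1)$, hence uniform integrability over sets of small measure, but no tightness --- nothing in that bound prevents a fixed amount of $\int f(\omega_n^2)\omega_n^2$ from escaping to infinity (compare $\chi_{B_1(n e_1)}$, bounded in every $L^r$ and tending to $0$ a.e. while its integral does not vanish). Moreover, because you chose the combination $I_{V_0}-\tfrac12 I'_{V_0}$, which cancels the $\|\cdot\|_{V_0}^2$ term, you are left with no direct control on $\limsup_n\|\omega_n\|_{V_0}^2$.

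The paper closes exactly this point by using $I_{V_0}-\tfrac1\theta I'_{V_0}$ instead, which retains the term $\big(\tfrac12-\tfrac1\theta\big)\|\omega_n\|_{V_0}^2$: a second application of Fatou to $\int\big(\tfrac1\theta f(\omega_n^2)\omega_n^2-\tfrac12 F(\omega_n^2)\big)dx$ then yields $\limsup_n\|\omega_n\|_{V_0}^2\leq\|\omega\|_{V_0}^2$, which combined with weak lower semicontinuity gives norm convergence and hence strong convergence. Alternatively, your route can be repaired without invoking tightness from the exponential bound: set $h_n:=f(\omega_n^2)\omega_n^2-F(\omega_n^2)\geq 0$; you have $h_n\to h:=f(\omega^2)\omega^2-F(\omega^2)$ a.e. and $\int h_n\to\int h$ (this is precisely $2I_{V_0}(\omega_n)\to 2c_{V_0}=2I_{V_0}(\omega)$ on the Nehari manifold), so $h_n\to h$ in $L^1$ by Scheff\'e's lemma; since (\ref{f3}) gives the pointwise domination $f(\omega_n^2)\omega_n^2\leq\frac{\theta}{\theta-2}h_n$, the generalized dominated convergence theorem delivers the missing convergence $\int f(\omega_n^2)\omega_n^2\to\int f(\omega^2)\omega^2$. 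One of these two fixes must be made explicit; as stated, the Vitali step is a genuine gap.
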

\begin{proof}
By (\ref{f3}) and Lemma \ref{le32}, it follows that
 \begin{align*}
\frac{\theta-2}{2\theta} \limsup_n \|\omega_n\|_{V_0}^2
&\leq
\limsup_n\Big\{\Big(\frac{1}{2}-\frac{1}{\theta}\Big) \|\omega_n\|_{V_0}^2
+\int_{\mathbb{R}^{2}}\Big(\frac{1}{\theta}f( \omega_{n}^{2}) \omega_{n}^{2}-\frac{1}{2}F( \omega_{n}^{2})\Big)dx\Big\}\\
&=
\limsup_n \Big\{ I_{V_{0}}(\omega_{n})-\frac{1}{\theta}I'_{V_{0}}(\omega_{n})[\omega_{n}]\Big\}\\
&=
c_{V_{0}}<\frac{\theta-2}{2\theta}\min\{1, V_{0}\}.
\end{align*}
Thus,
\begin{equation}
\label{limsup<1}
\limsup_n \|\omega_n\|_{V_0}^2<1,
\end{equation}
and for some subsequence, still denoted by $(\omega_{n})$, we can assume that there exists $\omega\in H^{1}(\mathbb{R}^{2}, \mathbb{R})$ such that $\omega_{n}\rightharpoonup \omega$ in $H^{1}(\mathbb{R}^{2}, \mathbb{R})$, $\omega_{n}\rightarrow \omega$ in $L^{r}_{\rm loc}(\mathbb{R}^{2}, \mathbb{R})$, for any $r\geq 1$ and  $\omega_{n}\rightarrow \omega$ a.e. in $x\in \mathbb{R}^{2}$.\\
Now we divide our study into two cases.\\
{\bf Case 1:} $\omega\neq 0$.\\
Observe that, for every $\phi\in C_{c}^{\infty}(\mathbb{R}^{2},\mathbb{R})$,
$$f(\omega_{n}^{2})\omega_{n}\phi\to f(\omega^{2})\omega\phi
\text{ a.e. in } \mathbb{R}^2 \text{ as }n\to+\infty$$
and that, by \eqref{1.2}, we have that for any $\zeta>0$, $q>2$, and $\alpha>4\pi$, there exists $C>0$ such that, for every $\phi\in C_{c}^{\infty}(\mathbb{R}^{2},\mathbb{R})$,
\[
|f(\omega_{n}^{2})\omega_{n}\phi |
\leq \zeta \vert \omega_{n}\vert \vert \phi\vert
+C\vert \omega_{n}\vert^{q-1}(e^{\alpha \omega_{n}^{2}}-1)\vert \phi\vert
\]
with
\[
\zeta \vert \omega_{n}\vert \vert \phi\vert
+C\vert \omega_{n}\vert^{q-1}(e^{\alpha \omega_{n}^{2}}-1)\vert \phi\vert
\to
\zeta \vert \omega\vert \vert \phi\vert
+C\vert \omega \vert^{q-1}(e^{\alpha \omega^{2}}-1)\vert \phi\vert
\text{ a.e. in } \mathbb{R}^2 \text{ as }n\to+\infty.
\]
Moreover,
by the H\"older inequality, \eqref{ineqe}, Sobolev inequality, \eqref{limsup<1}, and Lemma \ref{le24}, for suitable $r>1$, $q>2$, $\alpha>4\pi$, and $p>1$, we have that, for all $n\in \mathbb{N}$,
	\begin{align*}
	\int_{\mathbb{R}^{2}}\Big[\vert \omega_{n}\vert^{q-1}(e^{\alpha \omega_{n}^{2}}-1)\Big]^r dx
	&\leq
	\|\omega_n\|_{pr(q-1)}^{r(q-1)}
	\Big( \int_{\mathbb{R}^{2}} (e^{\alpha r p' \omega_{n}^{2}}-1) dx\Big)^{1/p'}\\
	&\leq
	C
	\|\omega_n\|_{V_0}^{r(q-1)}
	\Big( \int_{\mathbb{R}^{2}} (e^{\alpha r p' \|\omega_n\|_{V_0}^2 (\vert\omega_{n}\vert/\|\omega_n\|_{V_0})^{2}}-1) dx\Big)^{1/p'}\\
	&\leq
	C.
	\end{align*}
		Thus
		$$\vert \omega_{n}\vert^{q-1}(e^{\alpha \vert\omega_{n}\vert^{2}}-1)\rightharpoonup \vert \omega\vert^{q-1}(e^{\alpha \vert\omega\vert^{2}}-1) \text{ in } L^{r}(\mathbb{R}^{2}, \mathbb{R})$$
		and, since $\vert \omega_{n}\vert \rightharpoonup \vert \omega\vert$ in $L^{2}(\mathbb{R}^{2}, \mathbb{R})$
		we have that
		\[
		\zeta\int_{\mathbb{R}^{2}}\vert \omega_{n}\vert \vert \phi\vert dx
		+C\int_{\mathbb{R}^{2}}\vert \omega_{n}\vert^{q-1}(e^{\alpha \omega_{n}^{2}}-1)\vert \phi\vert dx
		\to
		\zeta\int_{\mathbb{R}^{2}}\vert \omega \vert \vert \phi\vert dx
		+C\int_{\mathbb{R}^{2}}\vert \omega \vert^{q-1}(e^{\alpha \omega^{2}}-1)\vert \phi\vert dx
		\]
		as $n\to +\infty$.\\
		Hence, a variant of the Lebesgue Dominated Convergence Theorem
		implies that
		$$
		\int_{\mathbb{R}^{2}}f(\omega_{n}^{2})\omega_{n}\phi dx\rightarrow \int_{\mathbb{R}^{2}}f(\omega^{2})\omega\phi dx,
		$$
		and so $\omega$ is a nontrivial critical point for $I_{V_{0}}$.\\
Since,  by the Fatou's Lemma,
$$
\int_{\mathbb{R}^{2}}\Big(\frac{1}{\theta}f(\omega^{2})\omega^{2}-\frac{1}{2}F( \omega^{2})\Big)dx
\leq
\liminf_n\int_{\mathbb{R}^{2}}\Big(\frac{1}{\theta}f(\omega_{n}^{2}) \omega_{n}^{2}-\frac{1}{2}F(\omega_{n}^{2})\Big)dx,$$
we have
 \begin{align*}
c_{V_{0}}
&\leq
I_{V_{0}}(\omega)
=I_{V_{0}}(\omega)-\frac{1}{\theta}I'_{V_{0}}(\omega)[\omega]\\
&=
\Big(\frac{1}{2}-\frac{1}{\theta}\Big) \|\omega\|_{V_0}^2
+\int_{\mathbb{R}^{2}}\Big(\frac{1}{\theta}f( \omega^{2})\omega^{2}-\frac{1}{2}F( \omega^{2})\Big)dx\\
&\leq
\liminf_n\Big\{\Big(\frac{1}{2}-\frac{1}{\theta}\Big) \|\omega_n\|_{V_0}^2
+\int_{\mathbb{R}^{2}}\Big(\frac{1}{\theta}f( \omega_{n}^{2}) \omega_{n}^{2}-\frac{1}{2}F( \omega_{n}^{2})\Big)dx\Big\}\\
&=
\liminf_n \Big\{ I_{V_{0}}(\omega_{n})-\frac{1}{\theta}I'_{V_{0}}(\omega_{n})[\omega_{n}]\Big\}\\
&=c_{V_{0}}.
\end{align*}
Hence, using again the Fatou's Lemma, we have
 \begin{align*}
0
&\leq \liminf_n \left[\Big(\frac{1}{2}-\frac{1}{\theta}\Big)(\|\omega_n\|_{V_0}^2-\|\omega\|_{V_0}^2)\right]\\
&
\leq
\limsup_n \left[\Big(\frac{1}{2}-\frac{1}{\theta}\Big)(\|\omega_n\|_{V_0}^2-\|\omega\|_{V_0}^2)\right]
\\
&=
\limsup_n \Big[
I_{V_0}(\omega_n) - c_{V_0}
+\int_{\mathbb{R}^{2}}\Big(\frac{1}{\theta}f(\omega^{2})\omega^{2}-\frac{1}{2}F( \omega^{2})\Big)dx
-\int_{\mathbb{R}^{2}}\Big(\frac{1}{\theta}f(\omega_{n}^{2}) \omega_{n}^{2}-\frac{1}{2}F( \omega_{n}^{2})\Big)dx
\Big]
\\
&=
\int_{\mathbb{R}^{2}}\Big(\frac{1}{\theta}f(\omega^{2}) \omega^{2}-\frac{1}{2}F( \omega^{2})\Big)dx
-\liminf_n \Big[\int_{\mathbb{R}^{2}}\Big(\frac{1}{\theta}f(\omega_{n}^{2}) \omega_{n}^{2}-\frac{1}{2}F( \omega_{n}^{2})\Big)dx\Big]
\leq 0,
\end{align*}
and we conclude.\\
{\bf Case 2:} $\omega= 0$.\\
We claim that, in this case, there exist $R, \eta>0$, and $(\tilde{y}_{n})\subset \mathbb{R}^{2}$ such that
\begin{equation}\label{4.7}
\lim_n\int_{B_{R}(\tilde{y}_{n})}\omega_{n}^{2}dx\geq \eta.
\end{equation}
Indeed, if this does not hold, for any $R>0$, one has
\begin{equation*}
\lim_n \sup_{y\in \mathbb{R}^{2}}\int_{B_{R}(y)} \omega_{n}^{2}dx=0,
\end{equation*}
and by \cite[Chapter 6, Lemma 8.4]{rK}, for every $\tau>2$,
\begin{equation}
\label{limtau}
\lim_n \Vert \omega_{n}\Vert_{\tau}=0.
\end{equation}
By \eqref{1.4} and
the fact that $(\omega_{n})\subset \mathcal{N}_{V_{0}}$, for $0<\zeta<V_{0}/2$ and $\alpha>4\pi$, there exists $C>0$ such that
\begin{equation}\label{4.5C}
\int_{\mathbb{R}^{2}}(\vert \nabla \omega_{n}\vert^{2}+V_{0} \omega_{n}^{2})dx\leq \zeta\int_{\mathbb{R}^{2}} \omega_{n}^{2}+C\int_{\mathbb{R}^{2}}\vert \omega_{n} \vert^{q}(e^{\alpha   \omega_{n}^{2}}-1)dx.
\end{equation}
In virtue of  \eqref{limsup<1}, we may choose $r>1$ and $\alpha>4\pi$ such that 		$r\alpha\Vert \omega_{n}\Vert_{V_0}^{2}<4\pi$ for $n\in \mathbb{N}$ large enough. Thus, by the H\"{o}lder inequality, inequality \eqref{ineqe},  Lemma \ref{le24},  and \eqref{limtau}, it follows that
\begin{equation}
\label{3.5C}
\begin{split}
\int_{\mathbb{R}^{2}}\vert \omega_{n}\vert^{q}(e^{\alpha \omega_{n}^{2}}-1)dx
&\leq
\Vert \omega_{n}\Vert_{qr'}^{q}\Big(\int_{\mathbb{R}^{2}}(e^{r\alpha \omega_{n}^{2}}-1)dx\Big)^{1/r}\\
& \leq \Vert \omega_{n}\Vert_{qr'}^{q}\Big(\int_{\mathbb{R}^{2}}(e^{r\alpha\Vert \omega_{n}\Vert_{V_0}^{2}(\omega_{n} /\Vert\omega _{n}\Vert_{V_0})^{2}}-1)dx\Big)^{1/r}
=o_{n}(1),
\end{split}
\end{equation}
where $r'$ is the conjugate exponent of $r$.\\
Using \eqref{4.5C} and \eqref{3.5C},  we have that
$\omega_{n} \rightarrow 0 $ in $H^1(\mathbb{R}^2,\mathbb{R})$ as $n\rightarrow +\infty$, and,
consequently, $I_{V_{0}}(\omega_{n})\rightarrow 0$ as $n\rightarrow +\infty$, which is in contradiction with $I_{V_{0}}(\omega_{n})\rightarrow c_{V_{0}}>0$ as $n\rightarrow +\infty$.\\
By \eqref{4.7}, we have that $\vert \tilde{y}_{n}\vert\rightarrow+\infty$. Otherwise, there exists $\bar{R}>0$ such that
\begin{equation*}
\lim_n\int_{B_{\bar{R}}(0)}\omega_{n}^{2}dx\geq \eta
\end{equation*}
and so
$\omega\neq 0$, which is a contradiction.\\
Since $I_{V_{0}}$ and  the norm $\Vert \cdot \Vert_{V_{0}}$ in $H^1(\mathbb{R}^2,\mathbb{R})$ are invariant by translation, we have
\begin{equation*}
I_{V_{0}}(\omega_{n}(\cdot+\tilde{y}_{n}))\rightarrow c_{V_{0}}.
\end{equation*}
Moreover $\omega_{n}(\cdot+\tilde{y}_{n})\in \mathcal{N}_{V_{0}}$ and, by \eqref{limsup<1},
 \begin{align*}
\limsup_n \Vert  \omega_{n}(\cdot+\tilde{y}_{n}) \Vert_{V_{0}}^{2}<1.
\end{align*}
Thus, there exists $\tilde{\omega}\in H^{1}(\mathbb{R}^{2}, \mathbb{R})$ with, by \eqref{4.7}, $\tilde{\omega}\neq 0$, such that
\begin{align*}
 \omega_{n}(\cdot+\tilde{y}_{n})\rightharpoonup \tilde{\omega}\quad \text{in}\quad H^{1}(\mathbb{R}^{2}, \mathbb{R}).
\end{align*}
Repeating the same arguments used in Case 1, it is easy to obtain that $\omega_{n}(\cdot+\tilde{y}_{n})\rightarrow \tilde{\omega}\in \mathcal{N}_{V_{0}}$ in $H^{1}(\mathbb{R}^{2}, \mathbb{R})$ and $\tilde{\omega}$ is a ground state for problem \eqref{3.2}.
\end{proof}

\section{The modified problem}\label{sec3}

In this section we introduce a modified problem for \eqref{1.6} and we show some properties of its functional. As in \cite{rDF}, to study \eqref{1.1}, or equivalently, \eqref{1.6} by variational methods, we modify suitably the nonlinearity $f$ so that, for $\varepsilon>0$ small enough, the solutions of such modified problem are also solutions of the original one.
More precisely, we fix  $k>0$ such that
\begin{align*}
0
<c_{V_{0}}
<\Big(\frac{1}{2}-\frac{1}{\theta}-\frac{1}{2k}\Big)\min\{1, V_{0}\}
<\frac{\theta-2}{2\theta}\min\{1, V_{0}\}.
\end{align*}

By the assumptions on $f$ there exists a unique number $a>0$ verifying $k f(a)=V_{0}$, where $V_{0}$ is given in (\ref{V1}). Hence we consider the function
\begin{equation*}
	\hat{f}(t):=
	\begin{cases}
	f(t), & t\leq a,\\
	V_{0}/k, & t> a.
	\end{cases}
\end{equation*}%

As, for instance, in \cite{rAFF}, we take $0< t_{a}< a<T_{a}$ and $\vartheta\in C_{0}^{\infty}(\mathbb{R}, \mathbb{R})$ such that
\begin{enumerate}[label=($\vartheta_\arabic{*}$), ref=$\vartheta_\arabic{*}$]
	\item \label{theta1} $\vartheta(t)\leq \hat{f}(t)$ for all $t\in [t_{a}, T_{a}]$;
	\item \label{theta2} $\vartheta(t_{a})=\hat{f}(t_{a})$,  $\vartheta(T_{a})=\hat{f}(T_{a})$, $\vartheta'(t_{a})=\hat{f}'(t_{a})$, and $\vartheta'(T_{a})=\hat{f}'(T_{a})$;
	\item \label{theta3} the map $\vartheta$ is increasing in $[t_{a}, T_{a}]$.
\end{enumerate}

Using the above functions we can define $\tilde{f}\in C^{1}(\mathbb{R}, \mathbb{R})$ as follows
\begin{equation*}
	\tilde{f}(t):=
	\begin{cases}
	\hat{f}(t) & \text{if } t\not\in [t_{a}, T_{a}],\\
	\vartheta (t) & \text{if } t\in [t_{a}, T_{a}].
	\end{cases}
\end{equation*}

Now we introduce the penalized nonlinearity $g:\mathbb{R}^{2}\times \mathbb{R}\rightarrow \mathbb{R}$
\begin{equation}
	\label{1.7}
	g(x, t):=\chi_{\Lambda}(x)f(t)+(1-\chi_{\Lambda}(x))\tilde{f}(t),
\end{equation}
where $\chi_{\Lambda}$ is the characteristic function on $\Lambda$ and $\displaystyle G(x, t):=\int_{0}^{t}g(x, s)ds$.\\
In view of  (\ref{f1})--(\ref{f5}) and (\ref{theta1})--(\ref{theta3}), we have that $g$ is a Carath\'{e}odory function satisfying the following properties:
\begin{enumerate}[label=($g_\arabic{*}$), ref=$g_\arabic{*}$]
	\item \label{g1} $g(x, t)=0$ for each $t\leq 0$;
	\item \label{g2} $\underset{t\rightarrow 0^{+}}{\lim}g(x, t)=0$ uniformly in $x\in \mathbb{R}^{2}$;
	\item \label{g3} $g(x, t)\leq f(t)$ for all  $t\geq 0$ and uniformly in $x\in \mathbb{R}^{2}$;
	\item \label{g4} $0<\theta G(x, t)\leq 2 g(x, t)t$, for each $x\in \Lambda$, $t>0$;
	\item \label{g5} $0< g(x, t)\leq V_{0}/k$, for each $x\in \Lambda^{c}$, $t>0$;
	\item \label{g6} for each $x\in \Lambda$, the function  $t\mapsto g(x, t)$ is strictly increasing in $t\in (0, +\infty)$ and for each $x\in \Lambda^c$, the function  $t\mapsto g(x, t)$ strictly is increasing in $(0, t_{a})$.
\end{enumerate}

Then we consider the {\em modified} problem
\begin{equation}
	\label{1.8}
	\Big(\frac{1}{i}\nabla-A_{\varepsilon}(x)\Big)^{2}u
	+V_{\varepsilon}(x)u=g(\varepsilon x, \vert u\vert^{2})u
	\quad
	\hbox{in }\mathbb{R}^2.
\end{equation}
Note that, if  $u$
is a solution of problem \eqref{1.8} with
$$
	\vert u(x)\vert^2 \leq t_{a}\quad\text{for all } x\in \Lambda^{c}_{\varepsilon},
	\quad
	\Lambda_{\varepsilon}:=\{x\in \mathbb{R}^{2}: \varepsilon x\in \Lambda\},
$$
then $u$ is a solution of problem \eqref{1.6}.

The functional associated to problem \eqref{1.8} is
\begin{equation*}
J_{\varepsilon}(u)
:=\frac{1}{2}\int_{\mathbb{R}^{2}}(\vert \nabla_{A_\varepsilon}u\vert^{2}+V_\varepsilon(x)\vert u\vert^{2})dx
-\frac{1}{2}\int_{\mathbb{R}^{2}} G(\varepsilon x, \vert u\vert^{2})dx,
\end{equation*}
defined in $H_{\varepsilon}$.
It is standard to prove that $J_{\varepsilon}\in C^{1}(H_{\varepsilon}, \mathbb{R})$ and its critical points are nontrivial weak solutions of the modified
problem \eqref{1.8}.

Now we show that the functional $J_{\varepsilon}$ satisfies the Mountain Pass Geometry.
\begin{lemma}
\label{le31}
For any fixed $\varepsilon>0$, the functional $J_{\varepsilon}$ satisfies the following properties:
\begin{enumerate}[label=(\roman{*}), ref=\roman{*}]
	\item \label{MPGi} there exist $\beta, r>0$ such that $J_{\varepsilon}(u)\geq \beta$ if $\Vert u\Vert_{\varepsilon}=r$;
	\item \label{MPGii} there exists $e\in H_{\varepsilon}$ with $\Vert e\Vert_{\varepsilon}>r$ such that $J_{\varepsilon}(e)<0$.
\end{enumerate}
\end{lemma}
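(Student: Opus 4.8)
The plan is to verify the two Mountain Pass geometry conditions separately, exploiting the growth estimates \eqref{1.2}--\eqref{1.5}, the diamagnetic inequality \eqref{2.1}, the Trudinger-Moser inequality (Lemma \ref{le24}), and property (\ref{g3}), which lets us bound $G(\varepsilon x, \cdot)$ from above by $F$ uniformly in $x$. Throughout I use that $\|u\|\leq C\|u\|_\varepsilon$ and the continuous Sobolev embeddings $H_\varepsilon\hookrightarrow L^r(\mathbb{R}^2,\mathbb{C})$ for $r\geq 2$.

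For part (\ref{MPGi}), I would first use (\ref{g3}) together with \eqref{1.5} to get, for any $\zeta>0$, any $q>2$, and any $\alpha>4\pi$,
\[
J_\varepsilon(u)\geq \frac12\|u\|_\varepsilon^2-\frac{\zeta}{2}\int_{\mathbb{R}^2}|u|^2\,dx-\frac{C}{2}\int_{\mathbb{R}^2}|u|^q\big(e^{\alpha|u|^2}-1\big)\,dx.
\]
Choosing $\zeta$ small enough that $\zeta\int_{\mathbb{R}^2}|u|^2\,dx\leq \tfrac12\min\{1,V_0\}^{-1}\cdot\zeta C'\|u\|_\varepsilon^2$ can be absorbed, the point is to control the last integral. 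Applying the H\"older inequality with conjugate exponents and the elementary inequality \eqref{ineqe}, I bound it by $\|u\|_{2q}^{q}\big(\int_{\mathbb{R}^2}(e^{2\alpha|u|^2}-1)\,dx\big)^{1/2}$. Restricting to $\|u\|_\varepsilon=r$ with $r$ small enough that $2\alpha r^2<4\pi$ (equivalently $2\alpha\|u\|_\varepsilon^2\big(|u|/\|u\|_\varepsilon\big)^2$ has integrable exponential by Lemma \ref{le24} applied to $|u|/\|u\|_\varepsilon$, whose $H^1$-norm is controlled), the Trudinger-Moser bound gives $\int_{\mathbb{R}^2}(e^{2\alpha|u|^2}-1)\,dx\leq C$. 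Hence the last integral is $\leq C\|u\|_\varepsilon^q$, and therefore $J_\varepsilon(u)\geq \tfrac12\|u\|_\varepsilon^2-C\|u\|_\varepsilon^2\zeta-C\|u\|_\varepsilon^q\geq \beta>0$ for $r$ small, since $q>2$. This fixes $r$ and $\beta$.

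For part (\ref{MPGii}), I would pick any nonnegative $u_0\in C_c^\infty(\mathbb{R}^2,\mathbb{R})\subset H_\varepsilon$ with $\operatorname{supp}u_0\subset\Lambda_\varepsilon$ (possible after translating, so that $g(\varepsilon x,\cdot)=f(\cdot)$ on the support); then use the Ambrosetti-Rabinowitz-type condition. Indeed, by (\ref{g4}), for $x\in\Lambda$ and $t>0$ we have $G(\varepsilon x,t)\geq c\,t^{\theta/2}$ for $t\geq t_a$, hence for $t>0$ large there is $C_1>0$ with $G(\varepsilon x,|u|^2)\geq C_1|u|^\theta-C_2$ on the support. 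Therefore
\[
J_\varepsilon(s u_0)\leq \frac{s^2}{2}\|u_0\|_\varepsilon^2-\frac{C_1 s^\theta}{2}\int_{\operatorname{supp}u_0}|u_0|^\theta\,dx+C_2,
\]
and since $\theta>2$ this tends to $-\infty$ as $s\to+\infty$. Choosing $e:=s u_0$ with $s$ large enough gives $\|e\|_\varepsilon>r$ and $J_\varepsilon(e)<0$.

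The only mildly delicate point is the exponential term in part (\ref{MPGi}): one must be careful to fix $r$ \emph{before} invoking Lemma \ref{le24}, so that the constant $r$ controlling $\|u\|_\varepsilon$ also controls the $H^1$-norm of $|u|/\|u\|_\varepsilon$ (which is bounded by a universal constant) and makes the exponent subcritical. Everything else is routine. This completes the proof of Lemma \ref{le31}.
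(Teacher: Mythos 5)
Your proof is correct. Part (\ref{MPGi}) is essentially identical to the paper's argument: bound $G(\varepsilon x,\cdot)$ via (\ref{g3}) and \eqref{1.5}, absorb the $\zeta$-term using $\|u\|_2^2\leq V_0^{-1}\|u\|_\varepsilon^2$, and control the exponential term by H\"older, \eqref{ineqe}, the diamagnetic inequality, and Lemma \ref{le24} applied to $|u|/\|u\|_\varepsilon$ with $r$ fixed small enough that the exponent is subcritical — exactly the paper's chain \eqref{3.1}--\eqref{3.1ter}. Part (\ref{MPGii}) takes a slightly different route: the paper invokes (\ref{f4}), which after integration gives the global pointwise bound $F(t)\geq \tfrac{2C_p}{p}t^{p/2}$ and hence the clean estimate $J_\varepsilon(t\varphi)\leq \tfrac{t^2}{2}\|\varphi\|_\varepsilon^2-\tfrac{C_p}{p}t^p\|\varphi\|_p^p$ with no additive constant, whereas you use the Ambrosetti--Rabinowitz condition (\ref{f3})/(\ref{g4}), which only yields $F(t)\geq C_1 t^{\theta/2}-C_2$ and therefore requires the compact support of $u_0$ to keep the $C_2$-term finite. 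Both arguments are standard and valid; the paper's choice is marginally cleaner, while yours has the small advantage of not needing the quantitative constant $C_p$ from (\ref{f4}). The only cosmetic issues are the garbled absorption sentence in part (\ref{MPGi}) (the intended inequality is simply $\tfrac{\zeta}{2}\|u\|_2^2\leq\tfrac{\zeta}{2V_0}\|u\|_\varepsilon^2$ with $\zeta<V_0$) and the use of the symbol $t_a$ as the AR threshold, which is harmless but could be any fixed positive number.
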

\begin{proof}
	Let us prove (\ref{MPGi}).\\
	By (\ref{g3}) and \eqref{1.5}, fixed $q>2$ and $\alpha>4\pi$, for any $\zeta>0$ and  there exists $C>0$ such that
\begin{equation}
\label{3.1}
G(\varepsilon x, \vert u\vert^{2})\leq\zeta \vert u\vert^{2}+C\vert u\vert^{q}(e^{\alpha \vert u\vert^{2}}-1) \quad \text{for all } x\in \mathbb{R}^{2}.
\end{equation}
By the H\"{o}lder and Sobolev inequalities and \eqref{ineqe} it follows
\begin{equation}
\label{3.1bis}
\int_{\mathbb{R}^{2}}\vert u\vert^{q}(e^{\alpha \vert u\vert^{2}}-1)dx
\leq \Vert u\Vert_{2q}^{q}\Big(\int_{\mathbb{R}^{2}}(e^{\alpha \vert u\vert^{2}}-1)^{2}dx\Big)^{1/2}
\leq C\Vert u\Vert_{\varepsilon}^{q}\Big(\int_{\mathbb{R}^{2}}(e^{2\alpha \vert u\vert^{2}}-1)dx\Big)^{1/2}.
\end{equation}
   Now, let us observe that, by the diamagnetic inequality \eqref{2.1}, if $u\in H_\varepsilon\setminus\{0\}$, it follows that
	$$\frac{\vert u\vert}{\Vert u\Vert_{\varepsilon}}\in H^{1}(\mathbb{R}^{2}, \mathbb{R}),
	\quad
	\Big\Vert \frac{\vert u\vert}{\Vert u\Vert_{\varepsilon}}\Big\Vert_2^2 \leq \frac{1}{V_0},
	\quad
	\Big\Vert \nabla \frac{ \vert u\vert}{\Vert u\Vert_{\varepsilon}}\Big\Vert_2^2\leq 1.$$
	Therefore, if we consider $\Vert u\Vert_{\varepsilon}=r>0$, for $\alpha r^2< \pi$,
	by Lemma \ref{le24}, there exists a constant $C>0$ such that
	\begin{equation}
	\label{3.1ter}
	\int_{\mathbb{R}^{2}}(e^{2\alpha \vert u\vert^{2}}-1)dx
	=\int_{\mathbb{R}^{2}}(e^{2\alpha r^{2}\big(\frac{\vert u\vert}{\Vert u\Vert_{\varepsilon}}\big)^{2}}-1)dx
	<\int_{\mathbb{R}^{2}}(e^{2\pi\big(\frac{\vert u\vert}{\Vert u\Vert_{\varepsilon}}\big)^{2}}-1)dx
	\leq C.
	\end{equation}
	Then, by \eqref{3.1}, \eqref{3.1bis}, and \eqref{3.1ter}, for any $\zeta>0$, there exits $C>0$ such that
	\begin{align*}
	J_{\varepsilon}(u)
	\geq\frac{1}{2}\Big(1-\frac{\zeta}{V_{0}}\Big)r^{2}-C r^{q}
	\end{align*}
	for any $u\in H_{\varepsilon}$ with $\Vert u\Vert_{\varepsilon}=r$ small enough and we can conclude easily since $q>2$.\\
To prove (\ref{MPGii}), let us fix $\varphi\in C_c^\infty(\mathbb{R}^2,\mathbb{C})\setminus\{0\}$ with $\operatorname{supp}(\varphi)\subset \Lambda_{\varepsilon}$. By (\ref{1.7}) and (\ref{f4})  we get
\begin{align*}
J_{\varepsilon}(t\varphi)\leq\frac{t^{2}}{2} \Vert \varphi \Vert^{2}_{\varepsilon}- \frac{C_{p}}{p}t^{p} \| \varphi\|_p^{p}
\end{align*}
and we can conclude passing to the limit as $t\rightarrow + \infty$, being $p>2$.
\end{proof}

Hence we can define the minimax level
\begin{equation*}
c_{\varepsilon}=\inf_{\gamma\in \Gamma_{\varepsilon}}\max_{t\in [0, 1]}J_{\varepsilon}(\gamma(t))
\end{equation*}
where
\begin{equation*}
\Gamma_{\varepsilon}=\{\gamma\in C([0, 1], H_{\varepsilon}): \gamma(0)=0 \text{ and } J_{\varepsilon}(\gamma(1))< 0\}.
\end{equation*}

The following results are important to prove the $(PS)_{c_{\varepsilon}}$ condition for the functional $J_{\varepsilon}$.
\begin{lemma}\label{le34}
Assume that $(u_{n})\subset H_{\varepsilon}$ is a $(PS)_{d}$ sequence for the functional $J_{\varepsilon}$. If
\begin{equation}
\label{estimceps}
0<d<\min\{1, V_{0}\}\Big(\frac{1}{2}-\frac{1}{\theta}-\frac{1}{2k}\Big),
\end{equation}
then $(u_{n})$ is bounded in $H_{\varepsilon}$ and
 \begin{align*}
\limsup_n\Vert \vert u_{n}\vert \Vert^{2}<1.
\end{align*}
\end{lemma}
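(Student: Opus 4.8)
The plan is to extract, from the Palais--Smale information, the classical Ambrosetti--Rabinowitz type estimate on the energy functional, being careful that the nonlinearity only has a good superquadraticity property \emph{inside} $\Lambda_\varepsilon$, while outside $\Lambda_\varepsilon$ one has merely the uniform bound $(\ref{g5})$. First I would use $J_{\varepsilon}(u_n)\to d$ and $J_{\varepsilon}'(u_n)\to 0$ in $H_\varepsilon^\ast$ to write
\[
d+o_n(1)+o_n(1)\Vert u_n\Vert_\varepsilon
=J_{\varepsilon}(u_n)-\frac{1}{\theta}J_{\varepsilon}'(u_n)[u_n]
=\Big(\frac{1}{2}-\frac{1}{\theta}\Big)\Vert u_n\Vert_\varepsilon^{2}
+\int_{\mathbb{R}^2}\Big(\frac{1}{\theta}g(\varepsilon x,\vert u_n\vert^2)\vert u_n\vert^2-\frac{1}{2}G(\varepsilon x,\vert u_n\vert^2)\Big)dx .
\]

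Next I would split the last integral over $\Lambda_\varepsilon$ and $\Lambda_\varepsilon^{c}$. On $\Lambda_\varepsilon$, property $(\ref{g4})$ gives $\frac{1}{\theta}g(\varepsilon x,\vert u_n\vert^2)\vert u_n\vert^2-\frac12 G(\varepsilon x,\vert u_n\vert^2)\geq 0$, so this part only helps. On $\Lambda_\varepsilon^{c}$, since $g\geq 0$ and, by $(\ref{g5})$, $G(\varepsilon x,\vert u_n\vert^2)=\int_0^{\vert u_n\vert^2}g(\varepsilon x,s)\,ds\leq \frac{V_0}{k}\vert u_n\vert^2\leq \frac{1}{k}V_\varepsilon(x)\vert u_n\vert^2$, the integral over $\Lambda_\varepsilon^{c}$ is bounded below by $-\frac{1}{2k}\int_{\mathbb{R}^2}V_\varepsilon(x)\vert u_n\vert^2dx\geq -\frac{1}{2k}\Vert u_n\Vert_\varepsilon^2$. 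Combining these two estimates yields
\[
\Big(\frac{1}{2}-\frac{1}{\theta}-\frac{1}{2k}\Big)\Vert u_n\Vert_\varepsilon^{2}
\leq d+o_n(1)+o_n(1)\Vert u_n\Vert_\varepsilon .
\]
Because the choice of $k$ (recall $0<c_{V_0}<(\frac12-\frac1\theta-\frac1{2k})\min\{1,V_0\}$ with $\min\{1,V_0\}>0$) forces $\frac{1}{2}-\frac{1}{\theta}-\frac{1}{2k}>0$, this quadratic inequality in $\Vert u_n\Vert_\varepsilon$ immediately gives the boundedness of $(u_n)$ in $H_\varepsilon$.

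Once boundedness is known, $o_n(1)\Vert u_n\Vert_\varepsilon\to 0$, so passing to the $\limsup$ in the displayed inequality and invoking the hypothesis $d<\min\{1,V_0\}(\frac12-\frac1\theta-\frac1{2k})$ gives $\limsup_n\Vert u_n\Vert_\varepsilon^{2}<\min\{1,V_0\}$. Finally, the diamagnetic inequality \eqref{2.1} together with $V_\varepsilon\geq V_0$ (from $(\ref{V1})$) yields $\min\{1,V_0\}\,\Vert\,\vert u_n\vert\,\Vert^{2}\leq \Vert\nabla_{A_\varepsilon}u_n\Vert_2^2+\int_{\mathbb{R}^2}V_\varepsilon(x)\vert u_n\vert^2dx=\Vert u_n\Vert_\varepsilon^{2}$, and therefore $\limsup_n\Vert\,\vert u_n\vert\,\Vert^{2}<1$.

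The only delicate point is the region $\Lambda_\varepsilon^{c}$: there $f$ (hence $g$) satisfies no superquadraticity condition, which is exactly the reason the penalized nonlinearity $g$ is introduced, and the uniform control $(\ref{g5})$ must be inserted into the energy estimate so that the resulting term can be absorbed into $\Vert u_n\Vert_\varepsilon^{2}$; the positivity of $\frac12-\frac1\theta-\frac1{2k}$, guaranteed by the way $k$ was fixed, is what makes this absorption — and thus both conclusions — work.
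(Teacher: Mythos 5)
Your proof is correct and follows essentially the same route as the paper: the Ambrosetti--Rabinowitz combination $J_{\varepsilon}(u_n)-\frac{1}{\theta}J'_{\varepsilon}(u_n)[u_n]$, the split over $\Lambda_\varepsilon$ (where (\ref{g4}) makes the integrand nonnegative) and $\Lambda_\varepsilon^c$ (where (\ref{g5}) and (\ref{V1}) let the term $-\frac{1}{2}\int G$ be absorbed as $-\frac{1}{2k}\Vert u_n\Vert_\varepsilon^2$), followed by the diamagnetic inequality to pass from $\limsup_n\Vert u_n\Vert_\varepsilon^2<\min\{1,V_0\}$ to $\limsup_n\Vert\,\vert u_n\vert\,\Vert^2<1$. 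No gaps.
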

\begin{proof}
By (\ref{g4}) and (\ref{g5}) we have
\begin{align*}
d+o_{n}(1)+o_{n}(1)\|u_n\|_\varepsilon
& \geq
J_{\varepsilon}(u_{n})
-\frac{1}{\theta}J_{\varepsilon}'(u_{n})[u_{n}]\\
&=
\Big(\frac{1}{2}-\frac{1}{\theta}\Big)\Vert u_{n}\Vert^{2}_{\varepsilon}
+ \int_{\mathbb{R}^{2}}\Big(\frac{1}{\theta}g(\varepsilon x, \vert u_{n}\vert^{2})\vert u_{n}\vert^{2}-\frac{1}{2}G(\varepsilon x, \vert u_{n}\vert^{2})\Big)dx\\
&\geq\Big(\frac{1}{2}-\frac{1}{\theta}\Big)\Vert u_{n}\Vert^{2}_{\varepsilon}
+\int_{\Lambda_{\varepsilon}^{c}}
\Big(\frac{1}{\theta}g(\varepsilon x, \vert u_{n}\vert^{2})\vert u_{n}\vert^{2}-\frac{1}{2}G(\varepsilon x, \vert u_{n}\vert^{2})\Big)dx\\
&\geq\Big(\frac{1}{2}-\frac{1}{\theta}\Big)\Vert u_{n}\Vert^{2}_{\varepsilon}-\frac{1}{2}\int_{\Lambda_{\varepsilon}^{c}}
G(\varepsilon x, \vert u_{n}\vert^{2})dx\\
&\geq\Big(\frac{1}{2}-\frac{1}{\theta}\Big)\Vert u_{n}\Vert^{2}_{\varepsilon}-\frac{1}{2k}\int_{\mathbb{R}^{2}}
V(\varepsilon x)\vert u_{n}\vert^{2}dx\\
&\geq\Big(\frac{1}{2}-\frac{1}{\theta}-\frac{1}{2k}\Big)\Vert u_{n}\Vert^{2}_{\varepsilon}.
\end{align*}
Thus $(u_n)$ is bounded in $H_\varepsilon$ and
\[
\Big(\frac{1}{2}-\frac{1}{\theta}-\frac{1}{2k}\Big)\Vert u_{n}\Vert^{2}_{\varepsilon}
\leq
d+o_{n}(1).
\]
Hence, by \eqref{estimceps} and the diamagnetic inequality \eqref{2.1} we have
\[
\min\{1, V_{0}\} \Big(\frac{1}{2}-\frac{1}{\theta}-\frac{1}{2k}\Big)
\limsup_n \| |u_n| \|^{2}
\leq
\Big(\frac{1}{2}-\frac{1}{\theta}-\frac{1}{2k}\Big)\limsup_n\Vert u_{n}\Vert^{2}_{\varepsilon}\\
\leq d<\min\{1, V_{0}\}\Big(\frac{1}{2}-\frac{1}{\theta}-\frac{1}{2k}\Big)
\]
and we can conclude.
\end{proof}

The next result is a version of the celebrated Lions Lemma (see e.g. \cite{rW}), which is useful in our arguments.
\begin{lemma}\label{le35}
Let $d>0$  and $(u_{n})\subset H_{\varepsilon}$ be a $(PS)_{d}$ sequence for $J_{\varepsilon}$ such that $u_{n}\rightharpoonup 0$ in $H_{\varepsilon}$ as $n\to +\infty$ and $\limsup_{n}\Vert \vert u_{n}\vert\Vert <1 $.  Then, one of the following alternatives occurs:
\begin{enumerate}[label=(\roman{*}), ref=\roman{*}]
	\item \label{le35i}$u_{n}\rightarrow 0$ in $H_{\varepsilon}$ as $n\to +\infty$;
	\item \label{le35ii} there are a sequence $(y_{n})\subset \mathbb{R}^{2}$ and constants $R$, $\beta>0$ such that
	\begin{equation*}
	\liminf_n \int_{B_{R}(y_{n})}\vert u_{n}\vert^{2}dx\geq \beta.
	\end{equation*}
\end{enumerate}
\end{lemma}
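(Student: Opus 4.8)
The plan is to argue by contradiction: assume that neither \eqref{le35i} nor \eqref{le35ii} holds, and derive that $u_n \to 0$ in $H_\varepsilon$, which is a contradiction with the failure of \eqref{le35i}. So suppose that
\[
\lim_n \sup_{y\in\mathbb{R}^2}\int_{B_R(y)} |u_n|^2\, dx = 0 \quad\text{for every } R>0.
\]
By the diamagnetic inequality \eqref{2.1}, $(|u_n|)$ is bounded in $H^1(\mathbb{R}^2,\mathbb{R})$, so the classical Lions vanishing lemma (\cite[Chapter 6, Lemma 8.4]{rK}, as already used in the proof of Lemma \ref{lemFat}) gives $\||u_n|\|_\tau = \|u_n\|_\tau \to 0$ for every $\tau>2$. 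The goal is then to show that this forces $\int_{\mathbb{R}^2} g(\varepsilon x, |u_n|^2)|u_n|^2\, dx \to 0$ and $\int_{\mathbb{R}^2} G(\varepsilon x, |u_n|^2)\, dx \to 0$.

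The key estimate is the exponential one. First I would fix, using \eqref{g3} and \eqref{1.4}, for $0<\zeta$ small, $q>2$ and $\alpha>4\pi$, a constant $C>0$ with
\[
g(\varepsilon x, |u_n|^2)|u_n|^2 \le \zeta |u_n|^2 + C|u_n|^q\big(e^{\alpha |u_n|^2}-1\big),
\]
and similarly for $G$ via \eqref{1.5}. The hypothesis $\limsup_n \||u_n|\|^2 < 1$ lets me pick $r>1$ close to $1$ and then $\alpha>4\pi$ close to $4\pi$ so that $r\alpha\,\||u_n|\|_{V_0}^2 < 4\pi$ (equivalently $r\alpha\,\||u_n|\|^2<4\pi$ after using equivalence of norms) for all large $n$; here I can reuse the scaling trick $e^{r\alpha|u_n|^2} = e^{r\alpha\||u_n|\|^2 (|u_n|/\||u_n|\|)^2}$ together with the Trudinger--Moser bound of Lemma \ref{le24} to get $\int_{\mathbb{R}^2}(e^{r\alpha|u_n|^2}-1)\,dx \le C$ uniformly in $n$. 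Then by Hölder's inequality with exponents $r,r'$, inequality \eqref{ineqe}, and $\|u_n\|_{qr'}\to 0$ (since $qr'>2$),
\[
\int_{\mathbb{R}^2} |u_n|^q\big(e^{\alpha|u_n|^2}-1\big)\,dx \le \|u_n\|_{qr'}^q\Big(\int_{\mathbb{R}^2}(e^{r\alpha|u_n|^2}-1)\,dx\Big)^{1/r} = o_n(1),
\]
exactly as in \eqref{3.5C}. Combining with $\|u_n\|_2^2 \le \||u_n|\|^2$ bounded and $\zeta$ arbitrarily small, I conclude $\int_{\mathbb{R}^2} g(\varepsilon x,|u_n|^2)|u_n|^2\,dx \to 0$ and likewise $\int_{\mathbb{R}^2} G(\varepsilon x,|u_n|^2)\,dx\to 0$.

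Finally, from $J_\varepsilon'(u_n)[u_n] = o_n(1)\|u_n\|_\varepsilon = o_n(1)$ (boundedness from the $(PS)_d$ property plus, if needed, Lemma \ref{le34}; or directly since $\|u_n\|_\varepsilon$ is bounded) we get
\[
\|u_n\|_\varepsilon^2 = \int_{\mathbb{R}^2} g(\varepsilon x,|u_n|^2)|u_n|^2\,dx + o_n(1) = o_n(1),
\]
so $u_n\to 0$ in $H_\varepsilon$, contradicting the assumption that \eqref{le35i} fails. Hence \eqref{le35ii} must hold. The main obstacle is the uniform exponential integrability step: one must carefully exploit the strict inequality $\limsup_n\||u_n|\|^2<1$ to choose the pair $(r,\alpha)$ keeping $r\alpha\||u_n|\|^2$ below the critical threshold $4\pi$ of Trudinger--Moser, and to handle the complex-valued $u_n$ only through $|u_n|$ via the diamagnetic inequality — everything else is a routine adaptation of the real, non-magnetic argument.
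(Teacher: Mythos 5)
Your proposal is correct and follows essentially the same route as the paper: assume the non-vanishing alternative fails, apply the Lions/Kavian vanishing lemma to $(|u_n|)$ to get $\|u_n\|_\tau\to 0$ for $\tau>2$, and then kill the term $\int_{\mathbb{R}^2}|u_n|^q(e^{\alpha|u_n|^2}-1)\,dx$ by H\"older, the inequality \eqref{ineqe}, and Trudinger--Moser with $r\alpha\,\limsup_n\||u_n|\|^2<4\pi$, concluding from $J_\varepsilon'(u_n)[u_n]=o_n(1)$ that $u_n\to 0$ in $H_\varepsilon$. The only cosmetic difference is that you phrase it as a contradiction while the paper proves the implication directly, and note that boundedness of $(u_n)$ in $H_\varepsilon$ already follows from $u_n\rightharpoonup 0$, so no appeal to Lemma \ref{le34} (which would require a level restriction) is needed.
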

\begin{proof}
		Assume that (\ref{le35ii}) does not hold. Then, for every $R>0$, we have
		\begin{equation*}
		\lim_n \sup_{y\in \mathbb{R}^{2}}\int_{B_{R}(y)}\vert u_{n}\vert^{2}dx=0.
		\end{equation*}
		Being $(|u_{n}|)$ bounded in $H^1(\mathbb{R}^2)$, by \cite[Chapter 6, Lemma 8.4]{rK},   it follows that
		$\| u_{n}\|_\tau\rightarrow 0$ as $n\rightarrow+\infty$, for any $\tau>2$.\\
		Since, by Lemma \ref{le34}, $(u_{n})$ is a bounded $(PS)_{d}$ sequence for $J_{\varepsilon}$, then, using (\ref{g3}) and \eqref{1.4} we have that for any $\zeta>0$ there exists $C>0$ such that
		\begin{align*}
		0\leq \Vert u_{n}\Vert_{\varepsilon}^{2}
		&=
		\int_{\mathbb{R}^{2}}g(\varepsilon x, \vert u_{n}\vert^{2})\vert u_{n}\vert^{2}dx+o_{n}(1)\\
		&\leq \zeta \int_{\mathbb{R}^{2}}\vert u_{n}\vert^{2}dx+C\int_{\mathbb{R}^{2}}\vert u_{n}\vert^{q}(e^{\alpha\vert u_{n}\vert^{2}}-1)dx+o_{n}(1)\\
		&\leq \frac{\zeta}{V_{0}}\Vert u_{n}\Vert_{\varepsilon}^{2}+C\int_{\mathbb{R}^{2}}\vert u_{n}\vert^{q}(e^{\alpha\vert u_{n}\vert^{2}}-1)dx+o_{n}(1)
		\end{align*}
		for every $\alpha>4\pi$.\\
		Since $\limsup_n \Vert \vert u_{n}\vert\Vert<1 $, arguing as in the proof of Lemma \ref{le31}, we have that $\Vert u_{n}\Vert_{\varepsilon}\rightarrow 0$ in $H_{\varepsilon}$ and we conclude.
\end{proof}

The following lemma provides a range of levels in which the functional $J_{\varepsilon}$ verifies the Palais-Smale condition.
\begin{lemma}\label{le36}
The functional $J_{\varepsilon}$ satisfies the $(PS)_{d}$ condition at any level $0<d<\Big(\frac{1}{2}-\frac{1}{\theta}-\frac{1}{2k}\Big)\min\{1, V_{0}\}$.
\end{lemma}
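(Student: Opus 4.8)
The plan is to follow the classical splitting/concentration scheme, exploiting the penalization to cancel the lack of compactness of the truncated nonlinearity at infinity. Let $(u_n)\subset H_\varepsilon$ be a $(PS)_d$ sequence with $0<d<\left(\frac12-\frac1\theta-\frac1{2k}\right)\min\{1,V_0\}$. By Lemma~\ref{le34} the sequence is bounded in $H_\varepsilon$, $\limsup_n\| |u_n| \|^2<1$, and the chain of estimates in that proof, together with the bound on $d$, also gives $\limsup_n\|u_n\|_\varepsilon^2<\min\{1,V_0\}$; so, up to a subsequence, $u_n\rightharpoonup u$ in $H_\varepsilon$, $u_n\to u$ in $L^r_{\rm loc}(\mathbb{R}^2,\mathbb{C})$ for every $r\geq1$ and a.e. First I would check that $u$ is a critical point of $J_\varepsilon$: in $J_\varepsilon'(u_n)[\phi]=o_n(1)$ with $\phi\in C_c^\infty(\mathbb{R}^2,\mathbb{C})$, the quadratic part passes to the limit by weak convergence, and for $\int_{\mathbb{R}^2}g(\varepsilon x,|u_n|^2)u_n\overline{\phi}\,dx$ one combines (\ref{g3}), \eqref{1.2}, the H\"older inequality, \eqref{ineqe}, $\limsup_n\| |u_n| \|^2<1$ and Lemma~\ref{le24} to produce an $n$-uniform $L^r$-bound for $|u_n|^{q-1}(e^{\alpha|u_n|^2}-1)$ on $\mathrm{supp}\,\phi$, so that a generalized dominated convergence argument as in Case~1 of Lemma~\ref{lemFat} gives $J_\varepsilon'(u)=0$. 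In particular, arguing as in Lemma~\ref{le34} via (\ref{g4}) and (\ref{g5}), $J_\varepsilon(u)=J_\varepsilon(u)-\frac1\theta J_\varepsilon'(u)[u]\geq\left(\frac12-\frac1\theta-\frac1{2k}\right)\|u\|_\varepsilon^2\geq0$.

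Next I would set $v_n:=u_n-u$ and use a Brezis--Lieb type splitting to show $v_n\rightharpoonup0$ in $H_\varepsilon$, $J_\varepsilon(v_n)\to d-J_\varepsilon(u)=:\bar d$, and $J_\varepsilon'(v_n)\to0$ in $H_\varepsilon'$. The penalization is essential here: the critical nonlinearity $f$ enters only on the bounded set $\Lambda_\varepsilon$, where $v_n\to0$ strongly in every $L^r$ and the exponential terms stay uniformly integrable thanks to $\limsup_n\| |v_n| \|^2<1$ and Lemma~\ref{le24}, whereas on $\Lambda_\varepsilon^c$ one only meets the bounded $\tilde f$. Since $\limsup_n\|v_n\|_\varepsilon^2\leq\limsup_n\|u_n\|_\varepsilon^2<\min\{1,V_0\}$ we get $\limsup_n\| |v_n| \|^2<1$, and since $J_\varepsilon(u)\geq0$ the level $\bar d$ satisfies $0\leq\bar d\leq d$, hence lies in the admissible range. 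If $\bar d=0$, the Nehari-type inequality $\left(\frac12-\frac1\theta-\frac1{2k}\right)\|v_n\|_\varepsilon^2\leq J_\varepsilon(v_n)-\frac1\theta J_\varepsilon'(v_n)[v_n]\to0$ already yields $v_n\to0$; if $\bar d>0$, I apply Lemma~\ref{le35} to $(v_n)$.

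If alternative~(i) of Lemma~\ref{le35} occurs then $v_n\to0$, i.e. $u_n\to u$ in $H_\varepsilon$, and we are done; so it remains to exclude~(ii). Suppose $\liminf_n\int_{B_R(y_n)}|v_n|^2\,dx\geq\beta$ for some $R,\beta>0$ and $(y_n)\subset\mathbb{R}^2$. Since $v_n\to0$ in $L^2_{\rm loc}$, necessarily $|y_n|\to+\infty$, so $B_R(y_n)\subset\Lambda_\varepsilon^c$ for $n$ large, $\Lambda_\varepsilon$ being bounded. From $J_\varepsilon'(v_n)[v_n]=o_n(1)$ I would write $\|v_n\|_\varepsilon^2=\int_{\Lambda_\varepsilon}f(|v_n|^2)|v_n|^2\,dx+\int_{\Lambda_\varepsilon^c}\tilde f(|v_n|^2)|v_n|^2\,dx+o_n(1)$: the first integral tends to $0$ (bounded domain, $v_n\to0$ there, exponential control via \eqref{1.4}, \eqref{ineqe}, the H\"older inequality, $\limsup_n\| |v_n| \|^2<1$ and Lemma~\ref{le24}, exactly as in the proof of Lemma~\ref{le31}), while by (\ref{g5}) the second is $\leq\frac{V_0}{k}\int_{\Lambda_\varepsilon^c}|v_n|^2\,dx\leq\frac1k\|v_n\|_\varepsilon^2$. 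Since $k>\theta/(\theta-2)>1$, this forces $\|v_n\|_\varepsilon\to0$, contradicting $\liminf_n\int_{B_R(y_n)}|v_n|^2\,dx\geq\beta>0$. Hence only~(i) can occur, $u_n\to u$ in $H_\varepsilon$, and $J_\varepsilon$ satisfies the $(PS)_d$ condition. I expect the most delicate point to be the Brezis--Lieb splitting of the nonlinear terms under exponential critical growth; it is made tractable precisely by the penalization, which makes the nonlinearity coincide with the critical $f$ only on the bounded set $\Lambda_\varepsilon$ (where strong $L^r$-compactness holds) and with the bounded $\tilde f$ elsewhere.
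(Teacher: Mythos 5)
Your argument is correct in outline, but it follows a genuinely different route from the paper. The paper never decomposes $u_n=u+v_n$: after showing that $u$ is a critical point, it establishes a uniform tail estimate \eqref{3.8} by testing $J_\varepsilon'(u_n)$ against $\phi_R u_n$ for a cut-off $\phi_R$ vanishing on $B_{R/2}(0)\supset\Lambda_\varepsilon$ — the point being that outside $\Lambda_\varepsilon$ one has $g(\varepsilon x,t)\leq V_\varepsilon(x)/k$ by (\ref{g5}), so the nonlinear term is absorbed into the left-hand side with factor $1-\frac1k$ — and then combines this with the Strauss compactness lemma \cite[Theorem A.I]{rBL} on $B_R(0)$ to get $\int g(\varepsilon x,|u_n|^2)|u_n|^2\,dx\to\int g(\varepsilon x,|u|^2)|u|^2\,dx$, whence $\|u_n\|_\varepsilon\to\|u\|_\varepsilon$ and strong convergence. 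You instead run a Brezis--Lieb splitting and then kill the remainder $v_n$ using the same structural feature of the penalization. Both work; the paper's version avoids the splitting of $J_\varepsilon'$ in the dual norm, which is the one step you assert rather than prove and which is genuinely the most technical part of your scheme: on $\Lambda_\varepsilon^c$ you need that $w\mapsto\tilde f(|w|^2)w$ is globally Lipschitz (true, since $\tilde f'(t)t$ is bounded and $\tilde f\leq V_0/k$), and on $\Lambda_\varepsilon$ you need $f(|u_n|^2)u_n\to f(|u|^2)u$ in some $L^s(\Lambda_\varepsilon)$ with $s>1$ close to $1$ (so that $s\alpha\limsup_n\||u_n|\|^2$ stays below $4\pi$ and H\"older/Vitali plus Lemma \ref{le24} apply, paired with $\|\phi\|_{L^{s'}(\Lambda_\varepsilon)}\leq C\|\phi\|_\varepsilon$). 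Two smaller remarks: your invocation of Lemma \ref{le35} is redundant — the inequality $\|v_n\|_\varepsilon^2\leq\int_{\Lambda_\varepsilon}f(|v_n|^2)|v_n|^2\,dx+\frac1k\|v_n\|_\varepsilon^2+o_n(1)$ with the first integral $o_n(1)$ already forces $v_n\to0$ unconditionally, with no need for the dichotomy or the case distinction on $\bar d$; and the case $\bar d=0$ is likewise subsumed. So your proof can be shortened, but as written it contains no gap that would make it fail.
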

\begin{proof} Let $(u_{n})\subset H_{\varepsilon}$ be a $(PS)_{d}$ for $J_{\varepsilon}$.
By Lemma \ref{le34}, $(u_{n})$ is bounded in $H_{\varepsilon}$ and $\limsup_{n}\Vert \vert u_{n}\vert\Vert<1$. Thus, up to a subsequence, $u_{n}\rightharpoonup u$
in $H_{\varepsilon}$ and $u_{n}\rightarrow u$ in $L^{q}_{\rm loc}(\mathbb{R}^{2}, \mathbb{R})$ for all $q\geq 1$ as $n\rightarrow+\infty$.
Moreover, by (\ref{g3}) and \eqref{1.2}, it follows that, fixed $q>2$, for any $\zeta>0$ and $\alpha>4\pi$, there exists a constant $C>0$, which depends on $q$, $\alpha$, $\zeta$, such that for every $\phi\in H_{\varepsilon}$,
\begin{align*}
\left|\operatorname{Re}\int_{\mathbb{R}^{2}}g(\varepsilon x, \vert u_{n}\vert^{2})u_{n}\overline{\phi}dx\right|
\leq \zeta\int_{\mathbb{R}^{2}}\vert u_{n}\vert \vert \overline{\phi}\vert dx+C
\int_{\mathbb{R}^{2}}\vert \overline{\phi}\vert\vert u_{n}\vert^{q-1}(e^{\alpha\vert u_{n}\vert^{2} }-1)dx.
\end{align*}
Arguing as in Lemma \ref{lemFat}, we have
\begin{align*}
\operatorname{Re}\int_{\mathbb{R}^{2}}g(\varepsilon x, \vert u_{n}\vert^{2})u_{n}\overline{\phi}dx\rightarrow \operatorname{Re}\int_{\mathbb{R}^{2}}g(\varepsilon x, \vert u\vert^{2})u\overline{\phi}dx.
\end{align*}
Thus, $u$ is a critical point of $J_{\varepsilon}$.\\
Let $R>0$ be such that $\Lambda_{\varepsilon}\subset B_{R/2}(0)$. We show that for any given $\zeta>0$, for $R$ large enough,
\begin{align}\label{3.8}
\limsup_n\int_{B_{R}^c(0)}(\vert \nabla_{A_\varepsilon}u_{n}\vert^{2}+V_{\varepsilon}(x)\vert u_{n}\vert^{2})dx\leq \zeta.
\end{align}
Let
$\phi_{R}\in C^{\infty}(\mathbb{R}^{2}, \mathbb{R})$ be a cut-off function
such that
\begin{equation*}
\phi_{R}=0\quad x\in B_{R/2}(0), \quad \phi_{R}=1\quad x\in B^{c}_{R}(0),\quad 0\leq \phi_{R}\leq 1,\quad\text{and}\, \quad\vert \nabla\phi_{R}\vert\leq C/R
\end{equation*}
where $C>0$ is a constant independent of $R$. Since the sequence $(\phi_{R}u_{n})$ is bounded in $H_{\varepsilon}$,  we have
\begin{equation*}
J_{\varepsilon}'( u_{n})(\phi_{R} u_{n})=o_{n}(1),
\end{equation*}
that is
\begin{align*}
\operatorname{Re}\int_{\mathbb{R}^{2}}\nabla_{A_\varepsilon}u_{n}\overline{\nabla_{A_\varepsilon}(\phi_{R}u_{n})}dx+\int_{\mathbb{R}^{2}}V_{\varepsilon}(x)\vert u_{n}\vert^{2}\phi_{R}dx=\int_{\mathbb{R}^{2}}g(\varepsilon x, \vert u_{n}\vert^{2})\vert u_{n}\vert^{2}\phi_{R}dx+o_{n}(1).
\end{align*}
Since $\overline{\nabla_{A_\varepsilon}(u_{n}\phi_{R})}=i\overline{u_{n}}\nabla\phi_{R}+\phi_{R}\overline{\nabla_{A_\varepsilon}u_{n}}$, using (\ref{g5}), we have
\begin{align*}
\int_{\mathbb{R}^{2}}(\vert \nabla_{A_\varepsilon}u_{n}\vert^{2}+V_{\varepsilon}(x)\vert u_{n}\vert^{2})\phi_{R}dx
&=\int_{\mathbb{R}^{2}}g( \varepsilon x, \vert u_{n}\vert^{2})\vert u_{n}\vert^{2}\phi_{R}dx
-\operatorname{Re}\int_{\mathbb{R}^{2}}i\overline{u_{n}}\nabla_{A_\varepsilon}u_{n}\nabla\phi_{R}dx
+o_{n}(1)\\
&\leq
\frac{1}{k}\int_{\mathbb{R}^{2}}V_{\varepsilon}(x)\vert u_{n}\vert^{2}\phi_{R}dx
-\operatorname{Re}\int_{\mathbb{R}^{2}}i\overline{u_{n}}\nabla_{A_\varepsilon}u_{n}\nabla\phi_{R}dx
+o_{n}(1).
\end{align*}
By the definition of $\phi_{R}$, the H\"{o}lder inequality and the boundedness of $(u_{n})$ in $H_{\varepsilon}$, we obtain
\begin{align*}
\Big(1-\frac{1}{k}\Big)\int_{\mathbb{R}^{2}}(\vert \nabla_{A_\varepsilon}u_{n}\vert^{2}+V_{\varepsilon}(x)\vert u_{n}\vert^{2})\phi_{R}dx\leq \frac{C}{R}\Vert {u}_{n}\Vert_{2}\Vert \nabla_{A_\varepsilon}u_{n}\Vert_{2}+o_{n}(1)\leq \frac{C_{1}}{R}+o_{n}(1)
\end{align*}
and so we can reach our claim.\\
Since $u_{n}\rightarrow  u$ in $L_{\rm loc}^{r}(\mathbb{R}^{2})$, for all $r\geq 1$, up to a subsequence, we have that
\begin{align*}
\vert u_{n}\vert\rightarrow \vert u\vert\text{ a.e. in }\mathbb{R}^2\text{ as }n\to+\infty.
\end{align*}
Then
\begin{align*}
g(\varepsilon x, \vert u_{n}\vert^{2})\vert u_{n}\vert^{2}\rightarrow g(\varepsilon x, \vert u\vert^{2})\vert u\vert^{2}  \text{ a.e. in } \mathbb{R}^2\text{ as }n\to+\infty.
\end{align*}
Moreover, $\vert u_{n}\vert\rightarrow  \vert u\vert$ in $L_{\rm loc}^{r}(\mathbb{R}^{2})$ for all $r\geq 1$.\\
Let
$$
P(x, t):=g(\varepsilon x, t^{2})t
\quad\text{and}\quad
Q(t):=e^{\alpha t^{2}}-1,
\quad t\in \mathbb{R},
$$
where $\alpha>4\pi$ with $\alpha\Vert \vert u_{n}\vert\Vert<4\pi$ for $n$ large.
Using (\ref{g3}) and (\ref{f2}), it is easy to see that
\begin{align*}
\underset{t\rightarrow +\infty}{\lim}\frac{P(x, t)}{Q(t)}=0\quad\text{uniformly for } x\in \mathbb{R}^2
\end{align*}
and, by Lemma \ref{le24},
\[
\sup_{n}\int_{\mathbb{R}^2}Q(\vert u_{n}\vert)dx\leq C.
\]
Then \cite[Theorem A.I]{rBL} implies
\begin{align*}
\lim_n \int_{B_{R}(0)}\Big|g(\varepsilon x, \vert u_{n}\vert^{2})\vert u_{n}\vert^{2} - g(\varepsilon x, \vert u\vert^{2})\vert u\vert^{2}\Big|dx=0.
\end{align*}
Moreover, by (\ref{g5}) and \eqref{3.8} we have
\[
\limsup_n \int_{B_{R}^c(0)}\Big\vert g(\varepsilon x, \vert u_{n}\vert^{2})\vert u_{n}\vert^{2}- g(\varepsilon x, \vert u\vert^{2})\vert u\vert^{2}\Big\vert dx
\leq
\limsup_n \frac{2}{k}\int_{ B_{R}^c(0)}(\vert \nabla_{A_\varepsilon}u_{n}\vert^{2}+V(\varepsilon x)\vert u_{n}\vert^{2})dx
< \frac{2\zeta}{k}
\]
for every $\zeta>0$.\\
Hence
	\begin{align*}
	\int_{\mathbb{R}^{2}}g(\varepsilon x, \vert u_{n}\vert^{2})\vert u_{n}\vert^{2}dx\rightarrow \int_{\mathbb{R}^{2}}g(\varepsilon x, \vert u\vert^{2})\vert u\vert^{2}dx \text{ as } n\rightarrow+\infty.
	\end{align*}
Finally, since $J_{\varepsilon}'(u)=0$,
we have
\[
o_{n}(1)
=J'_{\varepsilon}( u_{n}) [u_{n}]
=\Vert u_{n}\Vert_{\varepsilon}^{2}-\int_{\mathbb{R}^{2}}g(\varepsilon x, \vert u_{n}\vert^{2})\vert u_{n}\vert^{2}dx
=\Vert u_{n}\Vert_{\varepsilon}^{2}-\Vert u\Vert_{\varepsilon}^{2} + o_n(1).
\]
Thus, the sequence $(u_{n})$ strong converges to $u$ in $H_{\varepsilon}$.
\end{proof}

Since we would like to find multiple solutions of the functional $J_{\varepsilon}$, it is natural to consider it constrained to the Nehari manifold associated to our problem, that is
\begin{equation*}
 \mathcal{N}_{\varepsilon}:=\{u\in H_{\varepsilon}\backslash\{0\}: J'_{\varepsilon}(u)[u]=0\}.
\end{equation*}
In virtue of (\ref{g6}), it can be shown that for any $u\in H_\varepsilon\setminus\{0\}$, there exists a unique $t_\varepsilon>0$ such that
\[
\max_{t\geq 0}J_{\varepsilon}(tu)=J_{\varepsilon}(t_{\varepsilon}u)
\]
and $t_{\varepsilon}u\in \mathcal{N}_{\varepsilon}$. Thus, $c_{\varepsilon}$ can be characterized as follows
\begin{equation*}
c_{\varepsilon}=\inf_{u\in H_{\varepsilon}\backslash\{0\}}\sup_{t\geq 0}J_{\varepsilon}(tu)=\inf_{u\in \mathcal{N}_{\varepsilon}}J_{\varepsilon}(u).
\end{equation*}
Moreover, arguing as in Lemma \ref{leman}, we also have that there exists $\gamma>0$, which is independent of $\varepsilon>0$, such that
\begin{align}\label{3.14}
\Vert u\Vert_{\varepsilon}\geq \gamma>0,\quad\text{for each } u\in  \mathcal{N}_{\varepsilon}.
\end{align}

Now we show that  $\mathcal{N}_{\varepsilon}$ is a natural constraint, namely that the constrained critical points of the functional $J_{\varepsilon}$ on $\mathcal{N}_{\varepsilon}$ are the critical points of  $J_{\varepsilon}$ in $H_{\varepsilon}$.
First we prove the following property.

\begin{proposition}\label{prop31}
The functional $J_{\varepsilon}$ restricted to $\mathcal{N}_{\varepsilon}$ satisfies the $(PS)_{d}$ condition at any level $0<d<\Big(\frac{1}{2}-\frac{1}{\theta}-\frac{1}{2k}\Big)\min\{1, V_{0}\}$.
\end{proposition}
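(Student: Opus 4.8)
The plan is to deduce the constrained Palais--Smale condition from the unconstrained one established in Lemma~\ref{le36}. I would take a $(PS)_d$ sequence $(u_n)\subset\mathcal N_\varepsilon$ for $J_\varepsilon|_{\mathcal N_\varepsilon}$, with $0<d<\big(\tfrac12-\tfrac1\theta-\tfrac1{2k}\big)\min\{1,V_0\}$, and show it is actually a $(PS)_d$ sequence for $J_\varepsilon$ in $H_\varepsilon$; then Lemma~\ref{le36} produces a subsequence converging to some $u$, and $u\in\mathcal N_\varepsilon$ because $\|u_n\|_\varepsilon\ge\gamma>0$ by \eqref{3.14}. Since $u_n\in\mathcal N_\varepsilon$ means $J_\varepsilon'(u_n)[u_n]=0$, the computation in the proof of Lemma~\ref{le34} applies and gives $\big(\tfrac12-\tfrac1\theta-\tfrac1{2k}\big)\|u_n\|_\varepsilon^2\le J_\varepsilon(u_n)=d+o_n(1)$; hence $(u_n)$ is bounded in $H_\varepsilon$ and, by the diamagnetic inequality, $\limsup_n\||u_n|\|^2<1$.

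Next I would pass to the Lagrange multiplier formulation. Set $\Psi_\varepsilon(u):=J_\varepsilon'(u)[u]$, which is of class $C^1$ since $f$ and $\tilde f$ are $C^1$, so that $\mathcal N_\varepsilon=\{u\in H_\varepsilon\setminus\{0\}:\Psi_\varepsilon(u)=0\}$. A direct computation shows that for $u\in\mathcal N_\varepsilon$
\begin{align*}
\Psi_\varepsilon'(u)[u]
&=-2\int_{\mathbb{R}^2}\partial_t g(\varepsilon x,|u|^2)\,|u|^4\,dx\\
&\le-2\int_{\Lambda_\varepsilon}f'(|u|^2)|u|^4\,dx
\le-(p-2)C_p\int_{\Lambda_\varepsilon}|u|^p\,dx,
\end{align*}
where $\partial_t g$ denotes the derivative of $g$ with respect to the second variable, and I have used that $\partial_t g\ge0$ (which follows from (\ref{f4}), (\ref{theta3}) and the construction of $g$) and again (\ref{f4}). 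Moreover, if some $u\in\mathcal N_\varepsilon$ vanished on $\Lambda_\varepsilon$, then the Nehari identity, (\ref{g5}) and $V_\varepsilon\ge V_0$ would give $\|u\|_\varepsilon^2\le\frac1k\|u\|_\varepsilon^2$, hence $u=0$: a contradiction. Therefore $\Psi_\varepsilon'(u)[u]<0$ for every $u\in\mathcal N_\varepsilon$, so $\mathcal N_\varepsilon$ is a $C^1$ manifold and there are $\lambda_n\in\mathbb{R}$ with $J_\varepsilon'(u_n)=\lambda_n\Psi_\varepsilon'(u_n)+o_n(1)$ in $H_\varepsilon^{\ast}$.

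The crucial step is to prove $\lambda_n\to0$. Testing the last identity with $u_n$ gives $\lambda_n\Psi_\varepsilon'(u_n)[u_n]=o_n(1)$, so it is enough to show $\liminf_n\int_{\Lambda_\varepsilon}|u_n|^p\,dx>0$. Suppose not; then, up to a subsequence, $\int_{\Lambda_\varepsilon}|u_n|^p\,dx\to0$. Since $\Lambda_\varepsilon$ is bounded and $u_n\rightharpoonup u$ in $H_\varepsilon$, the compact embedding gives $u_n\to u$ in $L^r(\Lambda_\varepsilon)$ for every finite $r$, so $u\equiv0$ on $\Lambda_\varepsilon$ and $u_n\to0$ in $L^r(\Lambda_\varepsilon)$ for every finite $r$. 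By \eqref{1.4}, H\"older's inequality, \eqref{ineqe} and Lemma~\ref{le24}, exploiting $\limsup_n\||u_n|\|^2<1$ with an auxiliary exponent exactly as in the proof of Lemma~\ref{le35}, one then gets $\int_{\Lambda_\varepsilon}f(|u_n|^2)|u_n|^2\,dx\to0$. Using $u_n\in\mathcal N_\varepsilon$, (\ref{g5}) and $V_\varepsilon\ge V_0$,
\begin{align*}
\|u_n\|_\varepsilon^2
&=\int_{\Lambda_\varepsilon}f(|u_n|^2)|u_n|^2\,dx+\int_{\Lambda_\varepsilon^c}\tilde f(|u_n|^2)|u_n|^2\,dx\\
&\le o_n(1)+\frac1k\int_{\mathbb{R}^2}V_\varepsilon(x)|u_n|^2\,dx\le o_n(1)+\frac1k\|u_n\|_\varepsilon^2,
\end{align*}
whence $\|u_n\|_\varepsilon\to0$, contradicting \eqref{3.14}. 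Thus $\limsup_n\Psi_\varepsilon'(u_n)[u_n]<0$ and, since $(\Psi_\varepsilon'(u_n))$ is bounded in $H_\varepsilon^{\ast}$ (again by (\ref{g3}), \eqref{1.2} and $\limsup_n\||u_n|\|^2<1$), $\lambda_n\to0$; hence $J_\varepsilon'(u_n)\to0$ in $H_\varepsilon^{\ast}$ and Lemma~\ref{le36} concludes.

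I expect the genuinely delicate point to be this lower bound $\liminf_n\int_{\Lambda_\varepsilon}|u_n|^p\,dx>0$, i.e.\ the fact that a Palais--Smale sequence on $\mathcal N_\varepsilon$ cannot let its mass escape from the region $\Lambda_\varepsilon$ where the true nonlinearity $f$ acts: establishing it requires combining the Trudinger--Moser inequality on the bounded set $\Lambda_\varepsilon$ (which replaces the polynomial Sobolev estimates of the subcritical setting) with the penalization bound (\ref{g5}) on $\Lambda_\varepsilon^c$ and the uniform lower bound \eqref{3.14} on $\mathcal N_\varepsilon$, and it is exactly the exponential critical growth that makes this interplay necessary.
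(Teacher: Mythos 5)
Your proposal is correct and follows essentially the same route as the paper: identify the Lagrange multipliers, show $T_\varepsilon'(u_n)[u_n]$ (your $\Psi_\varepsilon'(u_n)[u_n]$) stays bounded away from zero via the lower bound on $\int_{\Lambda_\varepsilon}|u_n|^p\,dx$ coming from (\ref{f4}) and \eqref{3.14}, deduce $\lambda_n\to0$, and conclude with Lemma~\ref{le36}. The only small imprecision is in the last step: the boundedness of $\Psi_\varepsilon'(u_n)$ in $H_\varepsilon^{*}$ involves the term $\int g'(\varepsilon x,|u_n|^2)|u_n|^3|\phi|\,dx$, which is controlled not by (\ref{g3}) and \eqref{1.2} alone but by the explicit derivative bound (\ref{f5}), exactly as in the paper's estimate.
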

\begin{proof}
Let $(u_{n})\subset \mathcal{N}_{\varepsilon}$ be a $(PS)_{d}$ sequence of $J_{\varepsilon}$ restricted to $\mathcal{N}_{\varepsilon}$. Then,
$J_{\varepsilon}(u_{n})\rightarrow d$ as $n\rightarrow+\infty$ and there exists $(\lambda_{n})\subset \mathbb{R}$ such that
\begin{align}\label{3.15}
J'_{\varepsilon}(u_{n})=\lambda_{n}T'_{\varepsilon}(u_{n})+o_{n}(1),
\end{align}
where $T_{\varepsilon}:H_{\varepsilon}\rightarrow  \mathbb{R}$ is defined as
\begin{align*}
T_{\varepsilon}(u):=\Vert u\Vert_{\varepsilon}^{2}-\int_{\mathbb{R}^{2}}g(\varepsilon x, \vert u\vert^{2})\vert u\vert^{2} dx.
\end{align*}
Observe that, arguing as in Lemma \ref{le34}, we get that $(u_n)$ is bounded in $H_{\varepsilon}$ and $\limsup_n\Vert \vert u_{n}\vert \Vert^{2}<1$.\\
Note that, using the definition of $g$, the monotonicity of $\vartheta$, and (\ref{f4}),  we obtain
\begin{align*}
T'_{\varepsilon}(u_{n}) [u_{n}]
&=-2\int_{\mathbb{R}^{2}}g'(\varepsilon x, \vert u_{n}\vert^{2})\vert u_{n}\vert^{4} dx\leq -2\int_{\Lambda_{\varepsilon}\cup\{\vert u_n\vert^{2}<t_{a}\}}f'(\vert u_{n}\vert^{2})\vert u_{n}\vert^{4} dx\\
&\leq
-(p-2)C_{p}\int_{\Lambda_{\varepsilon}\cup\{\vert u_n\vert^{2}<t_{a}\}}\vert u_{n}\vert^{p} dx\leq -(p-2)C_{p}\int_{\Lambda_{\varepsilon}}\vert u_{n}\vert^{p} dx.
\end{align*}
Thus, up to a subsequence, we may assume that $ T'_{\varepsilon}(u_{n})[ u_{n}]\rightarrow\varsigma\leq 0$.\\
Let us prove that $\varsigma\neq 0$. Indeed, if $\varsigma=0$, then
\begin{align*}
o_{n}(1)=\vert T'_{\varepsilon}(u_{n}) [u_{n}] \vert\geq C \int_{\Lambda_{\varepsilon}}\vert u_{n}\vert^{p} dx.
\end{align*}
Thus we obtain that $u_{n}\rightarrow 0$ in $L^{p}(\Lambda_{\varepsilon}, \mathbb{C})$,  and by interpolation, we also have
$u_{n}\rightarrow 0$ in $L^{\tau}(\Lambda_{\varepsilon}, \mathbb{C})$, for all $\tau\geq 1$.
Moreover,
arguing as in Lemma \ref{le34}, we have that $\Vert \vert u_{n}\vert \Vert<1$ for $n$ large.  Hence, from
$J'_{\varepsilon}(u_{n})[u_{n}]=0$, (\ref{g3}), (\ref{g5}), \eqref{1.4}, the H\"{o}lder inequality and Lemma \ref{le24}, we conclude that
\begin{align*}
\Vert u_{n}\Vert_{\varepsilon}^{2}
&=\int_{\mathbb{R}^{2}}g(\varepsilon x,\vert u_{n}\vert^{2})\vert u_{n}\vert^{2} dx
\leq \int_{\Lambda_{\varepsilon}}f(\vert u_{n}\vert^{2})\vert u_{n}\vert^{2}dx+\frac{1}{k}\int_{\Lambda^{c}_{\varepsilon}}V(\varepsilon x)\vert u_{n}\vert^{2}dx\\
&\leq \zeta \int_{\Lambda_{\varepsilon}}\vert u_{n}\vert^{2}dx+C\int_{\Lambda_{\varepsilon}}\vert u_{n}\vert^{q}(e^{\alpha\vert u_{n}\vert^{2}}-1)dx+\frac{1}{k}\int_{\Lambda^{c}_{\varepsilon}}V(\varepsilon x)\vert u_{n}\vert^{2}dx\\
&=\frac{1}{k}\int_{\Lambda^{c}_{\varepsilon}}V(\varepsilon x)\vert u_{n}\vert^{2}dx+o_{n}(1),
\end{align*}
which implies that $u_{n}\rightarrow 0$ in $H_{\varepsilon}$. This is a contradiction with \eqref{3.14}. Therefore, $\varsigma<0$ and by \eqref{3.15} we deduce that
$\lambda_{n}=o_{n}(1)$.\\
On the other hand, since, by the definition of $g$ and (\ref{f5}), for every $\phi\in H_{\varepsilon}$ we have that
\begin{align*}
\int_{\mathbb{R}^{2}}g'(\varepsilon x, \vert u_{n}\vert^{2})\vert u_{n}\vert^{3} |\phi|  dx
&=
\int_{\Lambda_{\varepsilon}}f'(\vert u_{n}\vert^{2})\vert u_{n}\vert^{3} |\phi|  dx
+\int_{\Lambda^{c}_{\varepsilon}}\tilde{f}'(\vert u_{n}\vert^{2})\vert u_{n}\vert^{3} |\phi|  dx \\
&\leq
\int_{\Lambda_{\varepsilon}}(e^{4\pi |u_n|^2}-1) \vert u_{n}\vert^{3} |\phi|  dx
+\int_{\Lambda^{c}_{\varepsilon}\cap \{\vert u_{n}\vert^{2}\leq T_{a}\}}\tilde{f}'(\vert u_{n}\vert^{2})\vert u_{n}\vert^{3} |\phi|  dx\\
&\leq
\int_{\mathbb{R}^{2}}(e^{4\pi |u_n|^2}-1) \vert u_{n}\vert^{3} |\phi|  dx
+C\int_{\mathbb{R}^{2}}\vert u_{n}\vert^{3} |\phi|  dx,
\end{align*}
using (\ref{g3}), (\ref{1.2}), the fact that $\limsup_{n}\Vert \vert u_{n}\vert\Vert< 1$, the H\"{o}lder and Sobolev inequalities, for every $\phi\in H_{\varepsilon}$, we obtain
	\begin{align*}
	\vert T'_{\varepsilon}(u_{n}) [\phi] \vert
	&\leq
	2\Vert u_{n}\Vert_{\varepsilon}\Vert \phi\Vert_{\varepsilon}
	+2\int_{\mathbb{R}^{2}}g(\varepsilon x, \vert u_{n}\vert^{2})\vert u_{n}\vert\vert \phi\vert dx
	+2\int_{\mathbb{R}^{2}}g'(\varepsilon x, \vert u_{n}\vert^{2})\vert \vert u_{n}\vert^{3} \vert \phi\vert dx\\
	&\leq C
	\left[
	\Vert u_{n}\Vert_{\varepsilon}\Vert \phi\Vert_{\varepsilon}
	+\int_{\mathbb{R}^{2}} |u_n|^{q-1} (e^{\alpha |u_n|^2}-1) |\phi| dx
	+\int_{\mathbb{R}^{2}}(e^{4\pi |u_n|^2}-1) \vert u_{n}\vert^{3} \vert \phi\vert  dx\right.\\
	&\qquad\qquad\left.
	+\int_{\mathbb{R}^{2}}\vert u_{n}\vert^{3} |\phi|  dx
	\right] \\
	&\leq
	C(\Vert u_{n}\Vert_{\varepsilon}+\Vert u_{n}\Vert_{\varepsilon}^{q-1}+\Vert u_{n}\Vert_{\varepsilon}^{3})\Vert \phi\Vert_{\varepsilon}.
	\end{align*}
Then, the boundedness of $(u_{n})$ implies the boundedness of $T'_{\varepsilon}(u_{n})$ and so, by \eqref{3.15}, we can infer that $J'_{\varepsilon}(u_{n})=o_{n}(1)$,
that is $(u_{n})$ is a $(PS)_{d}$ sequence for $J_{\varepsilon}$. Hence, we apply Lemma \ref{le36} to conclude.
\end{proof}

As a consequence we get
\begin{corollary}\label{cor31}
The constrained critical points of the functional $J_{\varepsilon}$ on $\mathcal{N}_{\varepsilon}$ are critical points of $J_{\varepsilon}$ in $H_{\varepsilon}$.
\end{corollary}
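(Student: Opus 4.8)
The plan is to combine the Lagrange multiplier rule with the strict sign of $T_\varepsilon'(u)[u]$ along $\mathcal N_\varepsilon$, most of which is already contained in the proof of Proposition~\ref{prop31}. Let $u\in\mathcal N_\varepsilon$ be a critical point of $J_\varepsilon$ restricted to $\mathcal N_\varepsilon$, where $\mathcal N_\varepsilon=\{v\in H_\varepsilon\setminus\{0\}:T_\varepsilon(v)=0\}$ and $T_\varepsilon\in C^1(H_\varepsilon,\mathbb R)$. Once we know that $T_\varepsilon'(v)[v]\neq 0$ for every $v\in\mathcal N_\varepsilon$, the set $\mathcal N_\varepsilon$ is a $C^1$ manifold of codimension one, so there is $\lambda\in\mathbb R$ with $J_\varepsilon'(u)=\lambda\,T_\varepsilon'(u)$; evaluating this identity at $u$ and using $J_\varepsilon'(u)[u]=0$ (because $u\in\mathcal N_\varepsilon$) yields $\lambda\,T_\varepsilon'(u)[u]=0$, whence $\lambda=0$ and $J_\varepsilon'(u)=0$. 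Thus everything reduces to the non-degeneracy $T_\varepsilon'(v)[v]\neq 0$ on $\mathcal N_\varepsilon$.

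To establish it, fix $v\in\mathcal N_\varepsilon$. Using $\|v\|_\varepsilon^2=\int_{\mathbb R^2}g(\varepsilon x,|v|^2)|v|^2\,dx$ exactly as in the proof of Proposition~\ref{prop31}, and discarding the nonnegative contribution of the region $\Lambda_\varepsilon^c$ (where $g(\varepsilon x,\cdot)$ is nondecreasing by the construction of $\tilde f$), we obtain
\[
T_\varepsilon'(v)[v]=-2\int_{\mathbb R^2}g'(\varepsilon x,|v|^2)|v|^4\,dx\leq -2\int_{\Lambda_\varepsilon}f'(|v|^2)|v|^4\,dx\leq -(p-2)C_p\int_{\Lambda_\varepsilon}|v|^p\,dx,
\]
where the last inequality uses (\ref{f4}). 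Hence it suffices to show that $v$ does not vanish identically on $\Lambda_\varepsilon$.

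Assume by contradiction that $v\equiv 0$ a.e.\ on $\Lambda_\varepsilon$. Then, by (\ref{g5}) and (\ref{V1}),
\[
\|v\|_\varepsilon^2=\int_{\Lambda_\varepsilon^c}g(\varepsilon x,|v|^2)|v|^2\,dx\leq\frac{V_0}{k}\int_{\Lambda_\varepsilon^c}|v|^2\,dx\leq\frac1k\int_{\mathbb R^2}V_\varepsilon(x)|v|^2\,dx\leq\frac1k\|v\|_\varepsilon^2,
\]
which is impossible since $k>\theta/(\theta-2)>1$ by the choice of $k$ and $\|v\|_\varepsilon>0$. Therefore $\int_{\Lambda_\varepsilon}|v|^p\,dx>0$, so $T_\varepsilon'(v)[v]<0$ for every $v\in\mathcal N_\varepsilon$, which closes the argument. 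The only point requiring a little care is precisely this non-degeneracy, and as shown it boils down to the elementary fact that elements of $\mathcal N_\varepsilon$ cannot be supported in $\Lambda_\varepsilon^c$.
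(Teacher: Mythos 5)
Your proof is correct and is essentially the argument the paper intends: Corollary \ref{cor31} is stated as a consequence of the machinery in the proof of Proposition \ref{prop31}, namely the Lagrange multiplier rule combined with the strict inequality $T'_{\varepsilon}(u)[u]<0$ on $\mathcal{N}_{\varepsilon}$, and your computation of $T'_{\varepsilon}(v)[v]$ for $v\in\mathcal{N}_{\varepsilon}$ and the lower bound via (\ref{f4}) reproduce the paper's own steps. The one place where you genuinely depart from the paper is the proof of the non-degeneracy $T'_{\varepsilon}(v)[v]\neq 0$: the paper argues along a Palais--Smale sequence, showing that $\varsigma=\lim_n T'_{\varepsilon}(u_n)[u_n]=0$ would force $u_n\to 0$ in $L^{p}(\Lambda_{\varepsilon},\mathbb{C})$ and then, via the Trudinger--Moser estimates of Lemma \ref{le24} and the H\"older inequality, $u_n\to 0$ in $H_{\varepsilon}$, contradicting \eqref{3.14}; you instead observe pointwise that $\int_{\Lambda_{\varepsilon}}|v|^{p}\,dx=0$ would mean $v$ is supported in $\Lambda_{\varepsilon}^{c}$, where (\ref{g5}) and (\ref{V1}) give $\|v\|_{\varepsilon}^{2}\leq k^{-1}\|v\|_{\varepsilon}^{2}$, which is impossible since $k>\theta/(\theta-2)>1$. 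Your version is more elementary (no exponential estimates are needed) and yields the sharper pointwise statement $T'_{\varepsilon}(v)[v]<0$ for every $v\in\mathcal{N}_{\varepsilon}$, which is exactly what the multiplier argument requires; the paper's sequential version is what is needed for the Palais--Smale analysis in Proposition \ref{prop31} itself, but for the corollary your direct route is cleaner.
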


\section{Multiple solutions for the modified problem }\label{sec4}

In this section, we prove a multiplicity result for the modified problem \eqref{1.8} using the Ljusternik-Schnirelmann category theory. In order to get it,
we first provide some useful preliminaries.

Let $\delta>0$ be such that $M_{\delta}\subset \Lambda$, $\omega\in H^{1}(\mathbb{R}^{2}, \mathbb{R})$ be a positive ground state solution of the limit problem \eqref{3.2}, and $\eta\in C^{\infty}(\mathbb{R}^{+}, [0, 1])$ be a nonincreasing cut-off function defined in $[0, +\infty)$ such that $\eta(t)=1$ if $0\leq t\leq \delta/{2}$
and $\eta(t)=0$ if $t\geq \delta$.\\
For any $y\in M$, let us introduce the function
\begin{equation*}
\Psi_{\varepsilon, y}(x):=\eta(\vert \varepsilon x-y\vert)\omega\Big(\frac{\varepsilon x-y}{\varepsilon}\Big)\exp\Big(i\tau_{y}\Big(\frac{\varepsilon x-y}{\varepsilon}\Big)\Big),
\end{equation*}
where $$\tau_{y}(x):=A_{1}(y)x_{1}+A_{2}(y)x_{2}.$$
Let $t_{\varepsilon}>0$ be the unique positive number such that
\begin{equation*}
\max_{t\geq 0}J_{\varepsilon}(t\Psi_{\varepsilon, y})=J_{\varepsilon}(t_{\varepsilon}\Psi_{\varepsilon, y}).
\end{equation*}
Note that $t_{\varepsilon}\Psi_{\varepsilon, y}\in \mathcal{N}_{\varepsilon}$.\\
Let us define $\Phi_{\varepsilon}:M\rightarrow \mathcal{N}_{\varepsilon}$ as
\begin{equation*}
\Phi_{\varepsilon}(y):=t_{\varepsilon}\Psi_{\varepsilon, y}.
\end{equation*}
By construction, $\Phi_{\varepsilon}(y)$ has compact support for any $y\in M$.\\
Moreover, the energy of the above functions has the following behavior as $\varepsilon\rightarrow 0^{+}$.
\begin{lemma}\label{le41}
The limit
\begin{equation*}
\lim_{\varepsilon\rightarrow 0^{+}}J_{\varepsilon}(\Phi_{\varepsilon}(y))=c_{V_{0}}
\end{equation*}
holds uniformly in $y\in M$.
\end{lemma}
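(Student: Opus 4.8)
The plan is to compute directly the limit of $J_\varepsilon(t_\varepsilon\Psi_{\varepsilon,y})$ by a change of variables that "unfolds" the translation and dilation built into $\Psi_{\varepsilon,y}$, reducing everything to quantities attached to the ground state $\omega$ of the limit problem \eqref{3.2}. First I would set $z=(\varepsilon x-y)/\varepsilon$, so that $\Psi_{\varepsilon,y}(x)=\eta(\varepsilon|z|)\omega(z)e^{i\tau_y(z)}$; a standard computation using $|\nabla_{A_\varepsilon}(e^{i\tau_y(z)}v(z))|^2=|\nabla v(z)|^2$ modulo terms where $A(\varepsilon z+y)-A(y)$ appears, together with the H\"older continuity of $A$ and the exponential decay of $\omega$ and $\nabla\omega$ (quoted after the second lemma), shows that
\begin{align*}
\|\Psi_{\varepsilon,y}\|_\varepsilon^2 \to \|\omega\|_{V_0}^2,
\qquad
\int_{\mathbb{R}^2} G(\varepsilon x,|\Psi_{\varepsilon,y}|^2)\,dx \to \int_{\mathbb{R}^2} F(\omega^2)\,dx
\end{align*}
as $\varepsilon\to0^+$, uniformly in $y\in M$ (here one uses that, since $M_\delta\subset\Lambda$ and $\operatorname{supp}\Psi_{\varepsilon,y}$ shrinks into $\Lambda_\varepsilon$, $G(\varepsilon x,\cdot)=F(\cdot)$ on the support once $\varepsilon$ is small). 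The uniformity in $y$ is harmless because $M$ is compact and $\omega$ is a fixed function.

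Next I would control $t_\varepsilon$. Since $t_\varepsilon\Psi_{\varepsilon,y}\in\mathcal N_\varepsilon$, the Nehari identity $t_\varepsilon^2\|\Psi_{\varepsilon,y}\|_\varepsilon^2=\int_{\mathbb{R}^2}g(\varepsilon x,t_\varepsilon^2|\Psi_{\varepsilon,y}|^2)t_\varepsilon^2|\Psi_{\varepsilon,y}|^2\,dx$ holds; using $(g_3)$, $(f_4)$ (the power lower bound on $f'$, hence on $f$), and the convergences above, one shows $t_\varepsilon$ stays bounded and bounded away from $0$. If $t_\varepsilon\to+\infty$ along a subsequence, the superlinear term forces the right-hand side to dominate, contradiction; if $t_\varepsilon\to0$, dividing by $t_\varepsilon^2$ and using $(g_2)$ gives $\|\Psi_{\varepsilon,y}\|_\varepsilon^2\to0$, contradicting the first convergence. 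So, up to a subsequence, $t_\varepsilon\to t_0>0$, and passing to the limit in the Nehari identity (again via the uniform exponential-type estimates and Lemma \ref{le24}, as in Lemma \ref{lemFat}) shows $t_0\omega\in\mathcal N_{V_0}$, whence $t_0=t(\omega)=1$ since $\omega$ already lies on $\mathcal N_{V_0}$. By uniqueness of the limit the whole net $t_\varepsilon\to1$.

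Finally I would put the pieces together:
\begin{align*}
J_\varepsilon(t_\varepsilon\Psi_{\varepsilon,y})
=\frac{t_\varepsilon^2}{2}\|\Psi_{\varepsilon,y}\|_\varepsilon^2-\frac12\int_{\mathbb{R}^2}G(\varepsilon x,t_\varepsilon^2|\Psi_{\varepsilon,y}|^2)\,dx
\longrightarrow
\frac12\|\omega\|_{V_0}^2-\frac12\int_{\mathbb{R}^2}F(\omega^2)\,dx=I_{V_0}(\omega)=c_{V_0},
\end{align*}
using that $\omega$ is a ground state of \eqref{3.2}. The uniformity in $y\in M$ follows because all the estimates depend on $y$ only through $A(y)$ and $V$ near $y$, which are controlled uniformly on the compact set $M$ (and $V\equiv V_0$ on $M$). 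I expect the main obstacle to be making the convergence of the nonlinear term genuinely \emph{uniform} in $y$ while handling the exponential critical growth: one must absorb the defect $f(\cdot)$ versus $\tilde f(\cdot)$ on $\Lambda_\varepsilon^c$ (negligible since the support eventually sits inside $\Lambda_\varepsilon$) and, more delicately, justify passing the limit under the integral sign in $\int G(\varepsilon x,t_\varepsilon^2|\Psi_{\varepsilon,y}|^2)\,dx$ uniformly, which one does by the truncated Trudinger–Moser estimate of Lemma \ref{le24} together with $\limsup_\varepsilon\|\,|t_\varepsilon\Psi_{\varepsilon,y}|\,\|^2<1$ coming from $\|\omega\|_{V_0}^2<1$ (Lemma \ref{le32}), exactly as in the proof of Lemma \ref{lemFat}. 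Once this is in place, a contradiction/compactness argument (if the convergence failed uniformly, pick $y_\varepsilon\in M$ realizing the worst discrepancy, pass to a subsequence with $y_\varepsilon\to\bar y\in M$, and rerun the above with $\bar y$) closes the proof.
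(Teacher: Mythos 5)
Your proposal is correct and follows essentially the same route as the paper: the change of variables $z=(\varepsilon x-y)/\varepsilon$ with the phase $\tau_y$ cancelling $A(y)$ so that only $A(\varepsilon z+y)-A(y)$ survives (controlled by H\"older continuity of $A$ and the exponential decay of $\omega$), the Nehari identity to rule out $t_\varepsilon\to 0$ and $t_\varepsilon\to+\infty$ and force $t_\varepsilon\to 1$, and the observation that $\operatorname{supp}\Psi_{\varepsilon,y}\subset\Lambda_\varepsilon$ so $G=F$ there. The paper packages the uniformity in $y$ as a contradiction argument with sequences $(y_n)\subset M$ from the start rather than as a final compactness step, but the content is identical.
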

\begin{proof}
Assume by contradiction that the statement is false. Then there exist $\delta_{0}>0$, $(y_{n})\subset M$ and $\varepsilon_{n}\rightarrow 0^+$
satisfying
\begin{equation*}
\Big\vert J_{\varepsilon_{n}}(\Phi_{\varepsilon_{n}}(y_{n}))-c_{V_{0}}\Big\vert\geq \delta_{0}.
\end{equation*}
For simplicity, we write $\Phi_{n}$, $\Psi_{n}$ and $t_{n}$  for $\Phi_{\varepsilon_{n}}(y_{n})$, $\Psi_{\varepsilon_{n}, y_{n}}$ and $t_{\varepsilon_{n}}$, respectively.\\
We can check that
\begin{equation}\label{4.1}
\Vert \Psi_{n}\Vert_{\varepsilon_{n}}^{2}\rightarrow \int_{\mathbb{R}^{2}}(\vert \nabla \omega\vert^{2}+V_{0} \omega^{2})dx
\text{ as } n\to+\infty.
\end{equation}
Indeed, by a change of variable of $z=(\varepsilon_{n} x-y_{n})/\varepsilon_{n}$, the Lebesgue Dominated Convergence Theorem, the continuity of $V$ and $y_{n}\in M\subset\Lambda$(which is bounded), we deduce that
\begin{equation*}
\int_{\mathbb{R}^{2}}V(\varepsilon_{n} x)\vert \Psi_{n}\vert^{2}dx= \int_{\mathbb{R}^{2}}V(\varepsilon_{n} z+y_{n})\vert \eta(\vert\varepsilon_{n} z \vert )\omega(z)\vert^{2}dx
\rightarrow V_{0} \int_{\mathbb{R}^{2}}\omega^{2}dx
\text{ as } n\to+\infty.
\end{equation*}
Moreover, by the same change of variable  $z=(\varepsilon_{n}x-y_n)/\varepsilon_{n}$, we also have
\begin{align*}
\int_{\mathbb{R}^{2}}\vert \nabla_{A_{\varepsilon_{n}}}\Psi_{n}\vert^{2}dx
&=
\varepsilon_{n}^2 \int_{\mathbb{R}^{2}}| \eta'(\vert \varepsilon_{n} z\vert)\omega(z)|^{2} dz
+\int_{\mathbb{R}^{2}} |  \eta(\vert \varepsilon_{n} z\vert)\nabla\omega(z)|^{2}dz\\
&\qquad
+\int_{\mathbb{R}^{2}}\Big|  \eta(\vert \varepsilon_{n} z\vert)\Big(A(y_n)-A(\varepsilon_{n}z+y_n)\Big)\omega(z)\Big|^{2}dz\\
&\qquad
+2\varepsilon_{n} \int_{\mathbb{R}^{2}}
\eta(\vert \varepsilon_{n} z\vert) \eta'(\vert \varepsilon_{n} z\vert)\omega(z)\nabla\omega(z)\cdot \frac{z}{|z|}dz.
\end{align*}
It is clear that
\begin{equation*}
\lim_n \int_{\mathbb{R}^{2}}|  \eta(\vert \varepsilon_{n} z\vert)\nabla\omega(z)|^{2}dz
=\int_{\mathbb{R}^{2}}\vert\nabla\omega(z)\vert^{2}dz.
\end{equation*}
Moreover, using the definition of $\eta$, the H\"{o}lder continuity with exponent $\alpha\in(0, 1]$ of $A$, the exponential decay of $\omega$, and the Lebesgue Dominated Convergence Theorem, we can infer
\begin{align*}
	\int_{\mathbb{R}^{2}}|\eta'(\vert \varepsilon_{n} z\vert)\omega(z)|^{2}dz
	=o_n(1),
	\end{align*}
	\begin{align*}
	\int_{\mathbb{R}^{2}}|\eta(\vert \varepsilon_{n} z\vert)\eta'(\vert \varepsilon_{n} z\vert)\omega(z)\nabla\omega(z)|dz
	=o_n(1),
	\end{align*}
	and
	\begin{align*}
	\int_{\mathbb{R}^{2}}\Big|  \eta(\vert \varepsilon_{n} z\vert)\Big(A(y_n)-A(\varepsilon_{n}z+y_n)\Big)\omega(z)\Big|^{2}dz\leq C\varepsilon_{n}^{2\alpha}\int_{\vert \varepsilon_{n} z\vert\leq \delta} \omega^{2}(z)\vert z\vert^{2\alpha}dz=o_{n}(1),
	\end{align*}
obtaining \eqref{4.1}.\\
On the other hand, since $J'_{\varepsilon_{n}}(t_{n}\Psi_{n})(t_{n}\Psi_{n})=0$, by the change of variables $z=(\varepsilon_{n} x-y_{n})/\varepsilon_{n} $, observe that, if $z\in B_{\delta/\varepsilon_{n}}(0)$, then $\varepsilon_{n} z+y_{n}\in B_{\delta}(y_{n})\subset M_{\delta}\subset\Lambda$,  
 we have
\begin{align*}
\Vert \Psi_{n}\Vert_{\varepsilon_{n}}^{2}
&=
\int_{\mathbb{R}^{2}} g(\varepsilon_{n}z+y_{n}, t_{n}^2\eta^2(\vert \varepsilon_{n} z\vert)\omega^2(z)) \eta^2(\vert \varepsilon_{n} z\vert)\omega^2(z) dz\\
&= \int_{\mathbb{R}^{2}}f(t_{n}^2\eta^2(\vert \varepsilon_{n} z\vert)\omega^2(z)) \eta^2(\vert \varepsilon_{n} z\vert)\omega^2(z) dz\\
&\geq
\int_{B_{\delta/(2\varepsilon_{n})}(0)}f( t_{n}^2 \omega^2(z))\omega^{2}(z)dz \\
&\geq \int_{B_{\delta/2}(0)}f( t_{n}^2\omega^2(z))\omega^{2}(z)dz\\
&\geq
f( t_{n}^2\gamma^{2})\int_{B_{\delta/2}(0)}\omega^{2}(z)dz
\end{align*}
for all $n$ large enough
and where $\gamma=\min\{\omega(z): \vert z\vert\leq \delta/2\}$.\\
If $t_{n}\rightarrow+\infty$, by (\ref{f4}) we deduce that $\Vert \Psi_{n}\Vert_{\varepsilon_{n}}^{2}\rightarrow+\infty$ which contradicts \eqref{4.1}.\\
Therefore, up to a subsequence, we may assume that $t_{n}\rightarrow t_{0}\geq 0$.\\
If $t_{n}\rightarrow  0$, using the fact that $f$ is increasing and the Lebesgue Dominated Convergence Theorem, we obtain that
$$\Vert \Psi_{n}\Vert_{\varepsilon_{n}}^{2}=\int_{\mathbb{R}^{2}}f(t_{n}^2 \eta^2(\vert \varepsilon_{n} z\vert)\omega^2(z)) \eta^2(\vert \varepsilon_{n} z\vert)\omega^2(z)dz\rightarrow 0, \text{ as } n\rightarrow+\infty,$$
which contradicts
\eqref{4.1}. Thus, we have $t_{0}>0$ and
\begin{equation*}
\int_{\mathbb{R}^{2}}(\vert \nabla \omega\vert^{2}+V_{0}\omega^2)dx=\int_{\mathbb{R}^{2}}f( t_{0}\omega^{2})\omega^{2}dx,
\end{equation*}
so that $t_{0}\omega\in \mathcal{N}_{V_{0}}$. Since $\omega\in \mathcal{N}_{V_{0}}$, we obtain that $t_{0}=1$ and so, using the Lebesgue Dominated Convergence Theorem, we get
\begin{equation*}
\lim_{n}\int_{\mathbb{R}^{2}}F(\vert t_{n}\Psi_{n}\vert^{2})dx=\int_{\mathbb{R}^{2}}F( \omega^{2})dx.
\end{equation*}
Hence
\begin{equation*}
\lim_{n}J_{\varepsilon_{n}}(\Phi_{\varepsilon_{n}}(y_{n}))=I_{V_{0}}(\omega)=c_{V_{0}}
\end{equation*}
which is a contradiction and conclude.
\end{proof}

Now we define the barycenter map.\\
Let $\rho>0$ be such that $M_{\delta}\subset B_{\rho}$ and consider
$\Upsilon: \mathbb{R}^{2}\rightarrow \mathbb{R}^{2}$ defined by setting
\begin{equation*}
\Upsilon(x):=\left\{
\begin{array}{l}
x,\quad \quad \quad \text{if}\,\,\vert x\vert<\rho,\\
\rho x/\vert x\vert ,\quad \text{if}\,\,\vert x\vert\geq\rho.
\end{array}%
\right.
\end{equation*}
The barycenter map $\beta_{\varepsilon}: \mathcal{N}_{\varepsilon}\rightarrow \mathbb{R}^{2}$ is defined by
\begin{equation*}
\beta_{\varepsilon}(u):=\frac{1}{\Vert u\Vert_2^{2}}\int_{\mathbb{R}^{2}}\Upsilon(\varepsilon x)\vert u(x)\vert^{2} dx.
\end{equation*}

We have
\begin{lemma}\label{le42}
The  limit
\begin{equation*}
\lim_{\varepsilon\rightarrow 0^{+}}\beta_{\varepsilon}(\Phi_{\varepsilon}(y))=y
\end{equation*}
holds uniformly in $ y\in M$.
\end{lemma}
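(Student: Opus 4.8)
The strategy is the standard contradiction-by-rescaling argument, parallel to the proof of Lemma~\ref{le41}. Suppose the conclusion fails: then there exist $\delta_0>0$, a sequence $(y_n)\subset M$ and $\varepsilon_n\to 0^+$ such that
\[
|\beta_{\varepsilon_n}(\Phi_{\varepsilon_n}(y_n))-y_n|\geq \delta_0 \quad\text{for all }n.
\]
Writing $\Phi_n=\Phi_{\varepsilon_n}(y_n)=t_n\Psi_{\varepsilon_n,y_n}$ and performing the change of variables $z=(\varepsilon_n x-y_n)/\varepsilon_n$, I would compute
\[
\beta_{\varepsilon_n}(\Phi_n)=y_n+\frac{\displaystyle\int_{\mathbb{R}^2}\bigl[\Upsilon(\varepsilon_n z+y_n)-y_n\bigr]\,\eta^2(|\varepsilon_n z|)\,\omega^2(z)\,dz}{\displaystyle\int_{\mathbb{R}^2}\eta^2(|\varepsilon_n z|)\,\omega^2(z)\,dz},
\]
using that $|t_n\Psi_{\varepsilon_n,y_n}(x)|^2=t_n^2\eta^2(|\varepsilon_n z|)\omega^2(z)$ and that the $t_n^2$ factor cancels in numerator and denominator.

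**Key steps.** First, for the denominator, the Lebesgue Dominated Convergence Theorem (using $\eta(0)=1$, $0\le\eta\le1$, and $\omega\in L^2$) gives
\[
\int_{\mathbb{R}^2}\eta^2(|\varepsilon_n z|)\,\omega^2(z)\,dz \longrightarrow \int_{\mathbb{R}^2}\omega^2(z)\,dz =\|\omega\|_2^2>0.
\]
Second, for the numerator: since $(y_n)\subset M$ is contained in the bounded set $\Lambda$, up to a subsequence $y_n\to y_0$ for some $y_0\in\overline{M}$; since $\varepsilon_n\to0^+$, for each fixed $z$ we have $\varepsilon_n z+y_n\to y_0$, and because $M_\delta\subset B_\rho$ we have $y_0\in B_\rho$, so $\Upsilon$ is the identity near $y_0$ and $\Upsilon(\varepsilon_n z+y_n)-y_n\to y_0-y_0=0$ pointwise. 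The integrand is dominated by $2\rho\,\eta^2(|\varepsilon_n z|)\omega^2(z)\le 2\rho\,\omega^2(z)\in L^1$ (since $\Upsilon$ is bounded by $\rho$ and $|y_n|\le\rho$), so by dominated convergence the numerator tends to $0$. Combining the two limits yields $\beta_{\varepsilon_n}(\Phi_n)-y_n\to0$, contradicting $|\beta_{\varepsilon_n}(\Phi_n)-y_n|\ge\delta_0$, and the uniformity in $y\in M$ follows because the argument applies to every such sequence.

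**Main obstacle.** The computation itself is routine; the only point requiring a little care is the cancellation of $t_n$, i.e. checking that $\beta_\varepsilon$ is invariant under scalar multiplication of its argument (immediate from the formula, since $\beta_\varepsilon(u)$ depends only on $|u|^2/\|u\|_2^2$), so that no control on $t_n$ — which, unlike in Lemma~\ref{le41}, is not needed here — is required. The subtle structural point is verifying the domination used in the numerator uniformly over the sequence; this is handled by the crude bound $|\Upsilon(\varepsilon_n z+y_n)-y_n|\le|\Upsilon(\varepsilon_n z+y_n)|+|y_n|\le2\rho$, which is independent of $n$ and $z$. I do not expect any genuine difficulty beyond bookkeeping.
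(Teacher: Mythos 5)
Your proposal is correct and follows essentially the same route as the paper: a contradiction argument, the change of variables $z=(\varepsilon_n x-y_n)/\varepsilon_n$ giving the same expression for $\beta_{\varepsilon_n}(\Phi_{\varepsilon_n}(y_n))-y_n$, and dominated convergence (with the bound $|\Upsilon(\varepsilon_n z+y_n)-y_n|\leq 2\rho$) to show this difference is $o_n(1)$. The paper leaves the cancellation of $t_n$ and the domination implicit, so your write-up is simply a more detailed version of the same argument.
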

\begin{proof}
Assume by contradiction that there exists $\kappa>0$, $(y_{n})\subset M$ and $\varepsilon_{n}\rightarrow 0$ such that
\begin{equation}\label{4.2}
\vert \beta_{\varepsilon_{n}}(\Phi_{\varepsilon_{n}}(y_{n}))-y_{n}\vert \geq\kappa.
\end{equation}
Using the change of variable $z=(\varepsilon_{n} x-y_{n})/\varepsilon_{n}$, we can see that
\begin{equation*}
\beta_{\varepsilon_{n}}(\Phi_{\varepsilon_{n}}(y_{n}))=y_{n}+\frac{\displaystyle\int_{\mathbb{R}^{2}}(\Upsilon(\varepsilon_{n} z+y_{n})-y_{n}) \eta^2(\vert \varepsilon_{n} z\vert)  \omega^2(z) dz}{\displaystyle\int_{\mathbb{R}^{2}}\eta^2(\vert \varepsilon_{n} z\vert) \omega^2(z) dz}.
\end{equation*}
Taking into account $(y_{n})\subset M\subset M_{\delta}\subset B_{\rho}$ and the Lebesgue Dominated Convergence Theorem, we can obtain that
\begin{equation*}
\vert \beta_{\varepsilon_{n}}(\Phi_{\varepsilon_{n}}(y_{n}))-y_{n}\vert=o_{n}(1),
\end{equation*}
which contradicts \eqref{4.2}.
\end{proof}

Now, we prove the following useful compactness result.

\begin{proposition}\label{prop41}
Let $\varepsilon_{n}\rightarrow 0^+$ and $(u_{n})\subset \mathcal{N}_{\varepsilon_{n}}$ be such that $J_{\varepsilon_{n}}(u_{n})\rightarrow c_{V_{0}}$. Then there exists
$(\tilde{y}_{n})\subset \mathbb{R}^{2}$ such that the sequence $(\vert v_n\vert)\subset H^{1}(\mathbb{R}^{2}, \mathbb{R})$, where $v_{n}(x):= u_{n}(x+\tilde{y}_{n})$, has a convergent subsequence in $H^{1}(\mathbb{R}^{2}, \mathbb{R})$. Moreover, up to a subsequence, $y_{n}:=\varepsilon_{n}\tilde{y}_{n}\rightarrow y\in M$ as $n\to+\infty$.
\end{proposition}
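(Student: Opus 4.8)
The plan is to run the by now standard concentration argument for penalized problems in the spirit of \cite{rDF,rAFF}, with the magnetic terms handled throughout by the diamagnetic inequality \eqref{2.1}, used only in the direction in which it helps.

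\emph{Step 1 (boundedness and non-vanishing).} First I would observe that, since $u_{n}\in\mathcal{N}_{\varepsilon_{n}}$, the computation of Lemma \ref{le34} — now with $J'_{\varepsilon_{n}}(u_{n})[u_{n}]=0$ and using (\ref{g4}), (\ref{g5}) — gives $\big(\tfrac12-\tfrac1\theta-\tfrac1{2k}\big)\Vert u_{n}\Vert_{\varepsilon_{n}}^{2}\leq J_{\varepsilon_{n}}(u_{n})=c_{V_{0}}+o_{n}(1)$; by the choice of $k$ and \eqref{2.1} this yields that $(u_{n})$ is bounded in $H_{\varepsilon_{n}}$ and $\limsup_{n}\Vert \vert u_{n}\vert\Vert^{2}<1$. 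Then I would apply \cite[Chapter 6, Lemma 8.4]{rK} to the bounded sequence $(\vert u_{n}\vert)\subset H^{1}(\mathbb{R}^{2},\mathbb{R})$: if $\Vert u_{n}\Vert_{\tau}\to 0$ for all $\tau>2$, then inserting this, \eqref{1.4}, (\ref{g3}), the H\"older inequality and Lemma \ref{le24} into the Nehari identity $\Vert u_{n}\Vert_{\varepsilon_{n}}^{2}=\int_{\mathbb{R}^{2}}g(\varepsilon_{n}x,\vert u_{n}\vert^{2})\vert u_{n}\vert^{2}dx$ would force $\Vert u_{n}\Vert_{\varepsilon_{n}}\to 0$, hence $J_{\varepsilon_{n}}(u_{n})\to 0$, contradicting $c_{V_{0}}>0$. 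So there exist $R,\beta>0$ and $(\tilde{y}_{n})\subset\mathbb{R}^{2}$ with $\liminf_{n}\int_{B_{R}(\tilde{y}_{n})}\vert u_{n}\vert^{2}dx\geq\beta$; setting $v_{n}:=u_{n}(\cdot+\tilde{y}_{n})$ and $w_{n}:=\vert v_{n}\vert$, the sequence $(w_{n})$ is bounded in $H^{1}(\mathbb{R}^{2},\mathbb{R})$ with $\limsup_{n}\Vert w_{n}\Vert^{2}<1$ and, up to a subsequence, $w_{n}\rightharpoonup v$ in $H^{1}(\mathbb{R}^{2},\mathbb{R})$ with $v\neq 0$ (since $\int_{B_{R}(0)}v^{2}dx\geq\beta$).

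\emph{Step 2 (strong convergence of $(\vert v_{n}\vert)$).} For each $n$ (large) let $s_{n}>0$ be the unique number with $s_{n}w_{n}\in\mathcal{N}_{V_{0}}$. Using \eqref{2.1}, $V_{\varepsilon_{n}}\geq V_{0}$ and $G(x,\cdot)\leq F(\cdot)$ (a consequence of (\ref{g3})), one checks $J_{\varepsilon_{n}}(tu_{n})\geq I_{V_{0}}(tw_{n})$ for every $t\geq 0$, whence $c_{V_{0}}\leq I_{V_{0}}(s_{n}w_{n})\leq J_{\varepsilon_{n}}(s_{n}u_{n})\leq\max_{t\geq 0}J_{\varepsilon_{n}}(tu_{n})=J_{\varepsilon_{n}}(u_{n})\to c_{V_{0}}$. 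Thus $(s_{n}w_{n})\subset\mathcal{N}_{V_{0}}$ is a minimizing sequence for $I_{V_{0}}$ and $J_{\varepsilon_{n}}(s_{n}u_{n})\to c_{V_{0}}$. Since $\Vert s_{n}w_{n}\Vert_{V_{0}}$ is bounded (Lemma \ref{lemFat}) while $\liminf_{n}\Vert w_{n}\Vert_{V_{0}}\geq\Vert v\Vert_{V_{0}}>0$, the sequence $(s_{n})$ is bounded, and it is bounded away from $0$ too, since $s_{n}\to 0$ along a subsequence would give $s_{n}w_{n}\to 0$ and $I_{V_{0}}(s_{n}w_{n})\to 0\neq c_{V_{0}}$. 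Hence, up to a subsequence, $s_{n}\to s_{0}>0$ and $s_{n}w_{n}\rightharpoonup s_{0}v\neq 0$, so the case $\omega\neq 0$ of Lemma \ref{lemFat} applies and yields $s_{n}w_{n}\to s_{0}v$ in $H^{1}(\mathbb{R}^{2},\mathbb{R})$; therefore $w_{n}\to v$ strongly in $H^{1}(\mathbb{R}^{2},\mathbb{R})$, which is the first assertion of the proposition.

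\emph{Step 3 ($y_{n}\to y\in M$).} Set $y_{n}:=\varepsilon_{n}\tilde{y}_{n}$. I would first prove $\mathrm{dist}(y_{n},\Lambda)\to 0$. If not, along a subsequence $\mathrm{dist}(y_{n},\Lambda)\geq 2\delta_{1}>0$, so $\chi_{\Lambda}(\varepsilon_{n}x+y_{n})=0$ on $B_{R_{n}}(0)$ with $R_{n}:=2\delta_{1}/\varepsilon_{n}\to+\infty$; rewriting the Nehari identity for $u_{n}$ in the $v_{n}$-variables, splitting the integral at $B_{R_{n}}(0)$, and using \eqref{2.1}, $V\geq V_{0}$, (\ref{g5}) on $B_{R_{n}}(0)$ and (\ref{g3}) on $B_{R_{n}}^{c}(0)$, together with the tail bound $\int_{B_{R_{n}}^{c}(0)}f(w_{n}^{2})w_{n}^{2}dx\to 0$ (which follows from $w_{n}\to v$ in $H^{1}$ — hence $\Vert w_{n}\Vert_{L^{qr'}(B_{R_{n}}^{c}(0))}\to 0$ — combined with \eqref{1.4}, the H\"older inequality and Lemma \ref{le24}, exactly as in the estimate \eqref{3.5C}), one obtains $\int_{\mathbb{R}^{2}}\vert\nabla w_{n}\vert^{2}dx+\big(1-\tfrac1k\big)\int_{\mathbb{R}^{2}}V(\varepsilon_{n}x+y_{n})w_{n}^{2}dx\leq o_{n}(1)$; since $k>\theta/(\theta-2)>1$ and $V\geq V_{0}$, this forces $w_{n}\to 0$ in $H^{1}$, contradicting $v\neq 0$. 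Hence $\mathrm{dist}(y_{n},\Lambda)\to 0$ and, as $\Lambda$ is bounded, up to a subsequence $y_{n}\to y_{0}\in\overline{\Lambda}$. Finally, $J_{\varepsilon_{n}}(s_{n}u_{n})-I_{V_{0}}(s_{n}w_{n})$ equals the sum of three nonnegative quantities — a diamagnetic remainder, $\tfrac{s_{n}^{2}}{2}\int_{\mathbb{R}^{2}}(V(\varepsilon_{n}x+y_{n})-V_{0})w_{n}^{2}dx$, and $\tfrac12\int_{\mathbb{R}^{2}}\big(F(s_{n}^{2}w_{n}^{2})-G(\varepsilon_{n}x+y_{n},s_{n}^{2}w_{n}^{2})\big)dx$ — all of which tend to $0$ because the sum does by Step 2; recalling $s_{n}\to s_{0}>0$ this gives $\int_{\mathbb{R}^{2}}(V(\varepsilon_{n}x+y_{n})-V_{0})w_{n}^{2}dx\to 0$, and Fatou's Lemma (using $w_{n}^{2}\to v^{2}$ a.e., $v\neq 0$ and $V(\varepsilon_{n}x+y_{n})\to V(y_{0})$) yields $(V(y_{0})-V_{0})\Vert v\Vert_{2}^{2}\leq 0$, i.e. $V(y_{0})=V_{0}$. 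By (\ref{V2}) this excludes $y_{0}\in\partial\Lambda$, so $y_{0}\in\Lambda$ and hence $y_{0}\in M$.

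The main obstacles are two. First, the uniform control of the exponential nonlinearity: every Trudinger--Moser application (ruling out vanishing, the tail estimate in Step 3) must be run with an exponent $\alpha$ slightly above $4\pi$ and a H\"older exponent $r$ slightly above $1$ chosen so that $r\alpha\limsup_{n}\Vert w_{n}\Vert^{2}<4\pi$, which is possible only because $\limsup_{n}\Vert w_{n}\Vert^{2}<1$. Second, the localization step, where one must exploit precisely the penalization structure (\ref{g3})--(\ref{g5}) and the size condition on $k$; by contrast the magnetic terms add no essential difficulty, the only care needed being that \eqref{2.1} is one-sided.
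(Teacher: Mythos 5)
Your argument is correct and follows essentially the same route as the paper: Lions-type non-vanishing to produce $(\tilde{y}_n)$, projection of $|v_n|$ onto $\mathcal{N}_{V_0}$ combined with the diamagnetic squeeze $c_{V_0}\le I_{V_0}(s_n|v_n|)\le J_{\varepsilon_n}(u_n)\to c_{V_0}$, strong convergence via Lemma \ref{lemFat}, and the penalization bound $g\le V_0/k$ off $\Lambda$ to localize $y_n$. The only deviations are cosmetic: you get $\operatorname{dist}(y_n,\Lambda)\to 0$ in one step rather than the paper's ``bounded, then $y_0\in\overline{\Lambda}$'' two-stage contradiction, and for $V(y_0)=V_0$ you split $J_{\varepsilon_n}(s_nu_n)-I_{V_0}(s_n|v_n|)$ into three nonnegative vanishing terms instead of the paper's single Fatou/liminf chain on $I_{V_0}(\tilde v)$ --- both are valid.
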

\begin{proof}
Since $J'_{\varepsilon_{n}}(u_{n}) [u_{n}]=0$ and $J_{\varepsilon_{n}}(u_{n})\rightarrow c_{V_{0}}$, arguing as in the proof of Lemma \ref{le34},
using Lemma \ref{le32}, we can prove that there exists
$C>0$ such that $\Vert u_{n}\Vert_{\varepsilon_{n}}\leq C$ for all $n\in \mathbb{N}$ and  ${\limsup_n}\Vert \vert u_{n}\vert \Vert<1$.\\
Arguing as in the proof of Lemma \ref{le35} and recalling that $c_{V_{0}}>0$, we have that there exist a sequence $(\tilde{y}_{n})\subset \mathbb{R}^{2}$ and constants $R$, $\beta>0$ such that
\begin{equation}\label{4.3}
{\liminf_n}\int_{B_{R}(\tilde{y}_{n})}\vert u_{n}\vert^{2}dx\geq \beta.
\end{equation}
%
Now, let us consider the sequence $(\vert v_n\vert)\subset H^{1}(\mathbb{R}^{2}, \mathbb{R})$, where $v_{n}(x):= u_{n}(x+\tilde{y}_{n})$.\\
By the diamagnetic inequality \eqref{2.1}, we get that $(\vert v_{n}\vert)$ is bounded in $H^{1}(\mathbb{R}^{2}, \mathbb{R})$, and using \eqref{4.3}, we may
assume that $\vert v_{n}\vert\rightharpoonup v$ in $H^{1}(\mathbb{R}^{2}, \mathbb{R})$ for some $v\neq 0$.\\
Let now $t_{n}>0$ be such that $\tilde{v}_{n}:=t_{n}\vert v_{n}\vert\in \mathcal{N}_{V_{0}}$, and set $y_{n}:=\varepsilon_{n}\tilde{y}_{n}$.\\
By the diamagnetic inequality \eqref{2.1}, we have
\begin{equation*}
c_{V_{0}}\leq I_{V_{0}}(\tilde{v}_{n})\leq \max_{t\geq 0}J_{\varepsilon_{n}}(tu_{n})=J_{\varepsilon_{n}}(u_{n})=c_{V_{0}}+o_{n}(1),
\end{equation*}
which yields $I_{V_{0}}(\tilde{v}_{n})\rightarrow c_{V_{0}}$ as $n\to+\infty$.\\
Since the sequences  $(\vert v_{n}\vert)$ and $(\tilde{v}_{n})$ are bounded in $H^{1}(\mathbb{R}^{2}, \mathbb{R})$ and $\vert v_{n}\vert\not\rightarrow 0$ in $H^{1}(\mathbb{R}^{2}, \mathbb{R})$, then $(t_{n})$ is also bounded and so, up to a subsequence, we may assume that
$t_{n}\rightarrow t_{0}\geq 0$.\\
We claim that $t_{0}>0$.
Indeed, if $t_0=0$, then, since $(|v_{n}|)$ is bounded, we have $\tilde{v}_{n}\rightarrow 0$ in $H^{1}(\mathbb{R}^{2}, \mathbb{R})$, that is $I_{V_{0}}(\tilde{v}_{n})\rightarrow 0$, which contradicts $c_{V_{0}}>0$.\\
Thus, up to a subsequence, we may assume that $\tilde{v}_{n}\rightharpoonup \tilde{v}:=t_{0} v\neq 0$ in $H^{1}(\mathbb{R}^{2}, \mathbb{R})$, and, by Lemma \ref{lemFat},
we can deduce that $\tilde{v}_{n}\rightarrow \tilde{v}$ in $H^{1}(\mathbb{R}^{2}, \mathbb{R})$, which gives $\vert v_{n}\vert\rightarrow v$ in $H^{1}(\mathbb{R}^{2}, \mathbb{R})$.\\
Now we show the final part, namely that $(y_{n})$ has a subsequence such that $y_{n}\rightarrow y\in M$.\\
Assume by contradiction that $(y_{n})$ is not bounded and so, up to a subsequence, $\vert y_{n}\vert\rightarrow +\infty$ as $n\rightarrow+\infty$. Choose $R>0$ such that $\Lambda\subset B_{R}(0)$. Then for $n$ large enough, we have $\vert y_{n}\vert>2R$,  and, for any $x\in B_{R/\varepsilon_{n}}(0)$,
\begin{equation*}
\vert \varepsilon_{n} x+y_{n}\vert\geq \vert y_{n}\vert-\varepsilon_{n}\vert  x\vert>R.
\end{equation*}
Since $u_{n}\in \mathcal{N}_{\varepsilon_{n}}$, using (\ref{V1}) and the diamagnetic inequality \eqref{2.1}, we get that
\begin{equation}\label{4.4}
\begin{split}
\int_{\mathbb{R}^{2}}(\vert\nabla   \vert v_{n}\vert\vert^{2}+ V_{0}\vert v_{n}\vert^{2})dx&\leq \int_{\mathbb{R}^{2}}g(\varepsilon_{n} x+y_{n}, \vert v_{n}\vert^{2})\vert v_{n}\vert^{2}dx\\
&\leq \int_{B_{R/\varepsilon_{n}}(0)}\tilde{f}(\vert v_{n}\vert^{2})\vert v_{n}\vert^{2}dx+\int_{B^{c}_{R/\varepsilon_{n}}(0)}f(\vert v_{n}\vert^{2})\vert v_{n}\vert^{2}dx.
\end{split}
\end{equation}
Since $\vert v_{n}\vert\rightarrow v$ in $H^{1}(\mathbb{R}^{2}, \mathbb{R})$ and $\tilde{f}(t)\leq V_{0}/{k}$, we can see that \eqref{4.4} yields
\begin{align*}
\min\Big\{1, V_{0}\Big(1-\frac{1}{k}\Big)\Big\}\int_{\mathbb{R}^{2}}(\vert\nabla  \vert v_{n}\vert\vert^{2}+ \vert v_{n}\vert^{2})dx=o_{n}(1),
\end{align*}
that is $\vert v_{n}\vert\rightarrow 0$ in $H^{1}(\mathbb{R}^{2}, \mathbb{R})$, which contradicts to $v\not\equiv 0$.\\
Therefore, we may assume that
$y_{n}\rightarrow y_{0}\in \mathbb{R}^{2}$.\\
Assume  by contradiction that $ y_{0}\not\in \overline{\Lambda}$.  Then there exists $r>0$ such that for every $n$ large enough we have that $|y_n -y_0|<r$ and $B_{2r}(y_0)\subset\overline{\Lambda}^c$. Then, if $x\in B_{r/\varepsilon_n}(0)$, we have that $|\varepsilon_n x + y_n - y_0| <2r$ so that $\varepsilon_n x + y_n\in \overline{\Lambda}^c$ and so, arguing as before, we reach a contradiction.\\	
Thus, $y_{0}\in \overline{\Lambda}$.\\
To prove that $V(y_{0})=V_{0}$,  we suppose by contradiction that $V(y_{0})>V_{0}$. Using the Fatou's lemma, the change of variable $z=x+\tilde{y}_n$ and $\max_{t\geq 0}J_{\varepsilon_{n}}(t u_{n})=J_{\varepsilon_{n}}(u_{n})$,
we obtain
\begin{align*}
	c_{V_{0}}=I_{V_{0}}(\tilde{v})&<\frac{1}{2}\int_{\mathbb{R}^{2}}(\vert\nabla  \tilde{v}\vert^{2}+ V(y_{0})\vert \tilde{v}\vert^{2})dx-\frac{1}{2}\int_{\mathbb{R}^{2}}F(\vert \tilde{v}\vert^{2})dx\\
	&\leq {\liminf_n}\Big(\frac{1}{2}\int_{\mathbb{R}^{2}}(\vert\nabla \tilde{v}_{n}\vert^{2}+ V(\varepsilon_{n}x+y_{n})\vert \tilde{v}_{n}\vert^{2})dx-\frac{1}{2}\int_{\mathbb{R}^{2}}F(\vert \tilde{v}_{n}\vert^{2})dx\Big)\\
	&={\liminf_n}\Big(\frac{t^{2}_{n}}{2}\int_{\mathbb{R}^{2}}(\vert\nabla  \vert u_{n}\vert\vert^{2}+ V(\varepsilon_{n}z)\vert u_{n}\vert^{2})dz-\frac{1}{2}\int_{\mathbb{R}^{2}}F(\vert t_{n}u_{n}\vert^{2})dz\Big)\\
	&\leq {\liminf_n}J_{\varepsilon_{n}}(t_{n}u_{n})\leq {\liminf_n}J_{\varepsilon_{n}}(u_{n})= c_{V_{0}}
\end{align*}
which is impossible and we conclude.
\end{proof}

Let now
\begin{equation*}
\tilde{\mathcal{N}}_{\varepsilon}:=\{u\in \mathcal{N}_{\varepsilon}: J_{\varepsilon}(u)\leq c_{V_{0}}+h(\varepsilon)\},
\end{equation*}
where $h: \mathbb{R}^{+}\rightarrow \mathbb{R}^{+}$, $h(\varepsilon)\rightarrow 0$ as $\varepsilon\rightarrow 0^+$.\\
Fixed $y\in M$, since, by Lemma \ref{le41}, $\vert J_{\varepsilon}(\Phi_{\varepsilon}(y))-c_{V_{0}}\vert\rightarrow 0$ as $\varepsilon\rightarrow 0^+$, we get that $\tilde{\mathcal{N}}_{\varepsilon}\neq\emptyset$ for any $\varepsilon>0$ small enough.

We have the following relation between $\tilde{\mathcal{N}}_{\varepsilon}$ and the barycenter map.
\begin{lemma}\label{le43}
We have
\begin{equation*}
\lim_{\varepsilon\rightarrow 0^{+}}\sup_{u\in \tilde{\mathcal{N}}_{\varepsilon}}\operatorname{dist}(\beta_{\varepsilon}(u), M_{\delta})=0.
\end{equation*}
\end{lemma}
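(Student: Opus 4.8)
The plan is to argue by contradiction and reduce the statement to the compactness Proposition~\ref{prop41}, after which the conclusion follows from an explicit computation of the barycenter. Suppose the assertion fails. Then there are $\delta_0>0$, a sequence $\varepsilon_n\to0^+$ and functions $u_n\in\tilde{\mathcal N}_{\varepsilon_n}$ — this set being nonempty for $n$ large by Lemma~\ref{le41} — such that
\begin{equation*}
\operatorname{dist}\bigl(\beta_{\varepsilon_n}(u_n),M_\delta\bigr)\ge\delta_0\qquad\text{for every }n.
\end{equation*}

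First I would check that $J_{\varepsilon_n}(u_n)\to c_{V_0}$, which is exactly the hypothesis needed to apply Proposition~\ref{prop41}. The upper bound $J_{\varepsilon_n}(u_n)\le c_{V_0}+h(\varepsilon_n)\to c_{V_0}$ is immediate from $u_n\in\tilde{\mathcal N}_{\varepsilon_n}$. For the lower bound, since $u_n\in\mathcal N_{\varepsilon_n}$ one has $J_{\varepsilon_n}(u_n)=\max_{t\ge0}J_{\varepsilon_n}(tu_n)$; using (\ref{g3}) (hence $G(\varepsilon_n x,\cdot)\le F$), the diamagnetic inequality \eqref{2.1} and (\ref{V1}), one gets $J_{\varepsilon_n}(tu_n)\ge I_{V_0}(t|u_n|)$ for every $t\ge0$, so that $J_{\varepsilon_n}(u_n)\ge\max_{t\ge0}I_{V_0}(t|u_n|)\ge c_{V_0}$ by the minimax characterization of $c_{V_0}$ (and $|u_n|\not\equiv0$). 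Hence $J_{\varepsilon_n}(u_n)\to c_{V_0}$.

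Next I would invoke Proposition~\ref{prop41}, which provides $(\tilde y_n)\subset\mathbb R^2$ such that, up to a subsequence, $|v_n|\to v$ in $H^1(\mathbb R^2,\mathbb R)$ with $v\not\equiv0$, where $v_n:=u_n(\cdot+\tilde y_n)$, and $y_n:=\varepsilon_n\tilde y_n\to y\in M$. Performing the change of variables $z=x-\tilde y_n$ and using the translation invariance of the $L^2$-norm,
\begin{equation*}
\beta_{\varepsilon_n}(u_n)
=\frac{1}{\|v_n\|_2^2}\int_{\mathbb R^2}\Upsilon(\varepsilon_n z+y_n)\,|v_n(z)|^2\,dz
=y_n+\frac{1}{\|v_n\|_2^2}\int_{\mathbb R^2}\bigl(\Upsilon(\varepsilon_n z+y_n)-y_n\bigr)|v_n(z)|^2\,dz.
\end{equation*}
Since $|v_n|^2\to v^2$ in $L^1(\mathbb R^2)$ with $\|v\|_2>0$, $\Upsilon$ is bounded and continuous, and $\varepsilon_n z+y_n\to y$ for every $z$, a variant of the dominated convergence theorem (as in the proof of Lemma~\ref{le42}) shows that the last integral tends to $0$, whence $\beta_{\varepsilon_n}(u_n)\to\Upsilon(y)=y$, the last equality because $y\in M\subset M_\delta\subset B_\rho$. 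Consequently $\operatorname{dist}(\beta_{\varepsilon_n}(u_n),M_\delta)\to\operatorname{dist}(y,M_\delta)=0$ since $y\in M\subset M_\delta$, contradicting the choice of $(u_n)$ and concluding the proof.

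The heart of the matter is Proposition~\ref{prop41}; the only point in the present argument that requires a little care is the verification of its hypothesis, i.e. that $J_{\varepsilon_n}(u_n)\to c_{V_0}$, for which one combines the defining upper bound of $\tilde{\mathcal N}_{\varepsilon_n}$ with the elementary lower bound $J_{\varepsilon_n}(u_n)\ge c_{V_0}$ valid on the whole Nehari manifold $\mathcal N_{\varepsilon_n}$. The barycenter computation is then routine, the pointwise convergence $\varepsilon_n z+y_n\to y$ being harmless thanks to the $L^1$-convergence of $|v_n|^2$ and the boundedness of $\Upsilon$.
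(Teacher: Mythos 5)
Your argument is correct and follows essentially the same route as the paper: the chain $c_{V_0}\le\max_{t\ge0}I_{V_0}(t|u_n|)\le\max_{t\ge0}J_{\varepsilon_n}(tu_n)=J_{\varepsilon_n}(u_n)\le c_{V_0}+h(\varepsilon_n)$ to verify the hypothesis of Proposition~\ref{prop41}, followed by the translated barycenter computation. The only difference is cosmetic — you set up a contradiction, while the paper extracts a near-supremizing sequence $(u_n)$ and shows $|\beta_{\varepsilon_n}(u_n)-y_n|\to0$ directly — so the two proofs are interchangeable.
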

\begin{proof}
Let $\varepsilon_{n}\rightarrow 0^+$ as $n\rightarrow+\infty$. For any $n\in \mathbb{N}$, there exists $u_{n}\in\tilde{\mathcal{N}}_{\varepsilon_n}$ such that
\begin{equation*}
\sup_{u\in \tilde{\mathcal{N}}_{\varepsilon_n}}\inf_{y\in  M_{\delta}}|\beta_{\varepsilon_n}(u)- y|=\inf_{y\in  M_{\delta}}|\beta_{\varepsilon_n}(u_{n})- y|+o_{n}(1).
\end{equation*}
Therefore, it is enough to prove that there exists $(y_{n})\subset M_{\delta}$ such that
 \begin{equation*}
\lim_{n}\vert\beta_{\varepsilon_n}(u_{n})-y_{n}\vert=0.
\end{equation*}
By the diamagnetic inequality \eqref{2.1}, we can see that $I_{V_{0}}(t\vert u_{n}\vert)\leq J_{\varepsilon_{n}}(t u_{n})$ for any $t\geq 0$. Therefore, recalling
that $(u_{n})\subset \tilde{\mathcal{N}}_{\varepsilon_n}\subset \mathcal{N}_{\varepsilon_n}$, we can deduce that
 \begin{equation}\label{4.6}
c_{V_{0}}\leq\max_{t\geq 0}I_{V_{0}}(t\vert u_{n}\vert)\leq \max_{t\geq 0}J_{\varepsilon_{n}}(t u_{n})=J_{\varepsilon_{n}}(u_{n})\leq c_{V_{0}}+h(\varepsilon_{n})
\end{equation}
which implies that $J_{\varepsilon_{n}}(u_{n})\rightarrow c_{V_{0}}$ as $n\rightarrow+\infty$.\\
Then, Proposition \ref{prop41} implies that there exists $(\tilde{y}_{n})\subset \mathbb{R}^{2}$ such that $y_{n}=\varepsilon_{n}\tilde{y}_{n}\in M_{\delta}$ for $n$ large enough.\\
Thus, making the change of variable $z=x-\tilde{y}_n$, we get
\begin{equation*}
\beta_{\varepsilon_{n}}(u_{n})=y_{n}+\frac{\int_{\mathbb{R}^{2}}(\Upsilon(\varepsilon_{n} z+y_{n})-y_{n})\vert u_{n}(z+\tilde{y}_{n})\vert^{2} dz}{\int_{\mathbb{R}^{2}}\vert  u_{n}(z+\tilde{y}_{n})\vert^{2} dz}.
\end{equation*}
Since, up to a subsequence, $\vert u_{n}\vert(\cdot+\tilde{y}_{n})$ converges strongly in $H^{1}(\mathbb{R}^{2}, \mathbb{R})$ and $\varepsilon_{n}z+y_{n}\rightarrow y\in M$ for any $z\in \mathbb{R}^{2}$, we conclude.
\end{proof}

Finally, we present a relation between the topology of $M$ and the number of solutions of the modified problem \eqref{1.8}.
\begin{theorem}\label{th41}
For any $\delta>0$ such that $M_{\delta}\subset \Lambda$, there exists $\tilde{\varepsilon}_{\delta}>0$ such that, for any $\varepsilon\in (0, \tilde{\varepsilon}_{\delta})$, problem
\eqref{1.8} has at least $\text{cat}_{M_{\delta}}(M)$ nontrivial solutions.
\end{theorem}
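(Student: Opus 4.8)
The plan is to deduce the multiplicity from the abstract Ljusternik--Schnirelmann theory applied to $J_\varepsilon$ constrained to the Nehari manifold $\mathcal N_\varepsilon$, using the maps $\Phi_\varepsilon$ and $\beta_\varepsilon$ to transport topological information from $M$ to a thin sublevel set of $J_\varepsilon$. Recall the abstract principle: if $\mathcal N$ is a complete $C^{1,1}$ manifold, $J\in C^1(\mathcal N,\mathbb R)$ is bounded below and satisfies $(PS)_c$ for every $c\le d$, then $J$ has at least $\mathrm{cat}_{J^d}(J^d)$ critical points in $J^d:=\{u\in\mathcal N:\ J(u)\le d\}$. I would take $\mathcal N=\mathcal N_\varepsilon$ (a $C^{1,1}$ manifold which is a natural constraint by Corollary~\ref{cor31}), $J=J_\varepsilon|_{\mathcal N_\varepsilon}$ (bounded below since $\inf_{\mathcal N_\varepsilon}J_\varepsilon=c_\varepsilon>0$), and $d=c_{V_0}+h(\varepsilon)$, so that $J^d=\tilde{\mathcal N}_\varepsilon$. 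By the choice of $k$ one has $c_{V_0}<\big(\tfrac12-\tfrac1\theta-\tfrac1{2k}\big)\min\{1,V_0\}$; since $h(\varepsilon)\to0$, for $\varepsilon$ small we still have $c_{V_0}+h(\varepsilon)<\big(\tfrac12-\tfrac1\theta-\tfrac1{2k}\big)\min\{1,V_0\}$, and then Proposition~\ref{prop31} gives the $(PS)_c$ condition on $\mathcal N_\varepsilon$ for all $c\le d$. Hence $J_\varepsilon|_{\mathcal N_\varepsilon}$ has at least $\mathrm{cat}_{\tilde{\mathcal N}_\varepsilon}(\tilde{\mathcal N}_\varepsilon)$ critical points, which by Corollary~\ref{cor31} are critical points of $J_\varepsilon$ on $H_\varepsilon$, i.e.\ nontrivial solutions of \eqref{1.8}.

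It remains to prove $\mathrm{cat}_{\tilde{\mathcal N}_\varepsilon}(\tilde{\mathcal N}_\varepsilon)\ge \mathrm{cat}_{M_\delta}(M)$ for $\varepsilon$ small. With the choice $h(\varepsilon):=\sup_{y\in M}|J_\varepsilon(\Phi_\varepsilon(y))-c_{V_0}|$, which tends to $0$ by Lemma~\ref{le41}, we have $\Phi_\varepsilon(M)\subset\tilde{\mathcal N}_\varepsilon$, and $\Phi_\varepsilon$ is continuous. By Lemma~\ref{le43}, for $\varepsilon$ small the barycenter map sends $\tilde{\mathcal N}_\varepsilon$ into $M_\delta$, so we obtain continuous maps $M\xrightarrow{\Phi_\varepsilon}\tilde{\mathcal N}_\varepsilon\xrightarrow{\beta_\varepsilon}M_\delta$. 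Finally, Lemma~\ref{le42} gives $\beta_\varepsilon(\Phi_\varepsilon(y))\to y$ uniformly in $y\in M$, hence for $\varepsilon$ small the deformation $\mathcal H(t,y):=(1-t)\,y+t\,\beta_\varepsilon(\Phi_\varepsilon(y))$ keeps each point within distance $<\delta$ of $y\in M$, thus in $M_\delta$, and realizes a homotopy in $M_\delta$ between the inclusion $M\hookrightarrow M_\delta$ and $\beta_\varepsilon\circ\Phi_\varepsilon$. A standard property of the Ljusternik--Schnirelmann category then yields $\mathrm{cat}_{M_\delta}(M)\le \mathrm{cat}_{\tilde{\mathcal N}_\varepsilon}(\Phi_\varepsilon(M))\le \mathrm{cat}_{\tilde{\mathcal N}_\varepsilon}(\tilde{\mathcal N}_\varepsilon)$.

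Choosing $\tilde\varepsilon_\delta>0$ so small that all of the above holds simultaneously (i.e.\ $\Phi_\varepsilon(M)\subset\tilde{\mathcal N}_\varepsilon$, $\beta_\varepsilon(\tilde{\mathcal N}_\varepsilon)\subset M_\delta$, the homotopy exists, and $c_{V_0}+h(\varepsilon)$ lies below the Palais--Smale threshold), we conclude that for every $\varepsilon\in(0,\tilde\varepsilon_\delta)$ problem \eqref{1.8} has at least $\mathrm{cat}_{M_\delta}(M)$ nontrivial solutions. The main obstacle is not any single ingredient in isolation but the requirement that all these approximate identities be made effective for one and the same small $\varepsilon$; in particular, the two most delicate points are that $\Phi_\varepsilon(M)$ genuinely sits inside the thin sublevel set $\tilde{\mathcal N}_\varepsilon$ and that $\beta_\varepsilon$ maps $\tilde{\mathcal N}_\varepsilon$ into $M_\delta$ --- precisely what Lemmas~\ref{le41} and~\ref{le43}, together with the compactness of Proposition~\ref{prop41}, are designed to guarantee.
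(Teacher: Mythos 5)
Your proposal is correct and follows essentially the same route as the paper: the diagram $M\xrightarrow{\Phi_{\varepsilon}}\tilde{\mathcal{N}}_{\varepsilon}\xrightarrow{\beta_{\varepsilon}}M_{\delta}$ with $\beta_{\varepsilon}\circ\Phi_{\varepsilon}$ homotopic to the inclusion (via Lemmas~\ref{le41}, \ref{le42}, \ref{le43}) gives $\mathrm{cat}_{\tilde{\mathcal{N}}_{\varepsilon}}(\tilde{\mathcal{N}}_{\varepsilon})\geq \mathrm{cat}_{M_{\delta}}(M)$, and Ljusternik--Schnirelmann theory on $\mathcal{N}_{\varepsilon}$ together with Proposition~\ref{prop31} and Corollary~\ref{cor31} yields the solutions. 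You in fact supply slightly more detail than the paper (the explicit choice $h(\varepsilon)=\sup_{y\in M}|J_\varepsilon(\Phi_\varepsilon(y))-c_{V_0}|$ and the linear homotopy), where the paper simply cites the literature.
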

\begin{proof}
Given $\delta>0$, by Lemma \ref{le41}, Lemma \ref{le42}, and Lemma \ref{le43}, and arguing as in \cite[Section~6]{rCL}, we can find $\tilde{\varepsilon}_{\delta}>0$ such that for any $\varepsilon\in (0, \tilde{\varepsilon}_{\delta})$, the following diagram
\begin{equation*}
M \xrightarrow{\Phi_{\varepsilon}}\tilde{\mathcal{N}}_{\varepsilon}\xrightarrow{\beta_{\varepsilon}} M_{\delta}
\end{equation*}
is well defined and $\beta_{\varepsilon}\circ \Phi_{\varepsilon}$ is homotopically equivalent to the embedding $\iota: M\rightarrow M_{\delta}$. Thus, \cite[Lemma~4.3]{rBC} (see also \cite[Lemma 2.2]{rCLJDE}) implies that
\begin{equation*}
\text{cat}_{\tilde{\mathcal{N}}_{\varepsilon}}(\tilde{\mathcal{N}}_{\varepsilon})\geq \text{cat}_{M_{\delta}}(M).
\end{equation*}
By Proposition \ref{prop31}, we have also that $J_{\varepsilon}$ satisfies the Palais-Smale condition on $\tilde{\mathcal{N}}_{\varepsilon}$ (taking $\tilde{\varepsilon}_{\delta}$ smaller if necessary).
Hence, by the Ljusternik-Schnirelmann theory for $C^{1}$ functionals (see \cite[Theorem~5.20]{rW}), we get at least $\text{cat}_{M_{\delta}}(M)$ critical points of $J_{\varepsilon}$ restricted to $\mathcal{N}_{\varepsilon}$ which are, by Corollary \ref{cor31}, critical points for $J_{\varepsilon}$  in $\tilde{\mathcal{N}}_{\varepsilon}$. 
\end{proof}

\section{Proof of Theorem \ref{mt} }\label{Sec5}

In this section we prove our main result. The idea is to show that the solutions $u_\varepsilon$ obtained in Theorem \ref{th41} satisfy
\[
\vert u_{\varepsilon}(x)\vert^2\leq t_{a}
\text{ for } x\in \Lambda^{c}_{\varepsilon}
\]
for $\varepsilon$ small. The key ingredient is the following result.
\begin{lemma}\label{le51}
Let $\varepsilon_{n}\rightarrow 0^{+}$ and $u_{n}\in \tilde{\mathcal{N}}_{\varepsilon_{n}}$ be a solution of problem \eqref{1.8} for $\varepsilon=\varepsilon_n$. Then $J_{\varepsilon_{n}}(u_{n})\rightarrow c_{V_{0}}$.
Moreover, there exists $(\tilde{y}_{n})\subset\mathbb{R}^2$ such that, if $v_{n}(x):= u_{n}(x+\tilde{y}_{n})$,  we have that $(\vert v_{n}\vert)$ is bounded in $L^{\infty}(\mathbb{R}^{2}, \mathbb{R})$ and
\begin{equation*}
\lim_{\vert x\vert\rightarrow+\infty} \vert v_{n}(x)\vert=0\quad\text{uniformly in } n\in \mathbb{N}.
\end{equation*}
\end{lemma}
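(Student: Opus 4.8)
The first assertion, $J_{\varepsilon_n}(u_n)\to c_{V_0}$, follows exactly as in the first lines of the proof of Lemma \ref{le43}: since $u_n\in\tilde{\mathcal{N}}_{\varepsilon_n}\subset\mathcal{N}_{\varepsilon_n}$, the diamagnetic inequality \eqref{2.1} gives $I_{V_0}(t|u_n|)\le J_{\varepsilon_n}(tu_n)$ for every $t\ge 0$, so
$c_{V_0}\le\max_{t\ge 0}I_{V_0}(t|u_n|)\le\max_{t\ge 0}J_{\varepsilon_n}(tu_n)=J_{\varepsilon_n}(u_n)\le c_{V_0}+h(\varepsilon_n)$, and $h(\varepsilon_n)\to 0$. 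Then Proposition \ref{prop41} applies: there is $(\tilde{y}_n)\subset\mathbb{R}^2$ such that, setting $v_n(x):=u_n(x+\tilde{y}_n)$, the sequence $(|v_n|)$ converges in $H^1(\mathbb{R}^2,\mathbb{R})$ (say to $v$) and $y_n:=\varepsilon_n\tilde{y}_n\to y\in M$. Note also that each $v_n$ is, by translation invariance of the equation, a solution of the translated modified problem with potential $V(\varepsilon_n\cdot+y_n)$ and magnetic potential $A(\varepsilon_n\cdot+y_n)$.

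The heart of the matter is the uniform $L^\infty$-bound and the uniform decay. The plan is to run a Moser-type iteration on $|v_n|$, exploiting the Kato inequality for magnetic operators: $-\Delta|v_n|\le \operatorname{Re}\bigl(\tfrac{\overline{v_n}}{|v_n|}(\tfrac1i\nabla-A_{\varepsilon_n,y_n})^2v_n\bigr)=-V(\varepsilon_n\cdot+y_n)|v_n|+g(\varepsilon_n\cdot+y_n,|v_n|^2)|v_n|$ in the distributional sense, so that, using (\ref{g3}) and \eqref{1.2},
\[
-\Delta|v_n|\le g(\varepsilon_n\cdot+y_n,|v_n|^2)|v_n|\le \zeta|v_n|+C|v_n|^{q-1}(e^{\alpha|v_n|^2}-1).
\]
Because $\limsup_n\||v_n|\|^2<1$ (from Proposition \ref{prop41}, via Lemma \ref{le34}-type estimates), one can fix $\alpha>4\pi$ and $r>1$ close to $1$ with $r\alpha\||v_n|\|^2<4\pi$ for large $n$; then the Trudinger–Moser inequality (Lemma \ref{le24}) gives a uniform bound on $\int_{\mathbb{R}^2}(e^{r\alpha|v_n|^2}-1)\,dx$. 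This lets one show the right-hand side lies in $L^s_{\mathrm{loc}}$ uniformly for some $s>1$, and a first Moser iteration step (testing with $|v_n|^{2(\beta-1)}v_n$ truncated, using the Gagliardo–Nirenberg / Sobolev inequality in dimension two) bootstraps $|v_n|$ into $L^\sigma$ for every $\sigma<\infty$ with norms bounded uniformly in $n$. Once $|v_n|\in L^\sigma$ uniformly for large $\sigma$, the nonlinear term $g(\cdot,|v_n|^2)|v_n|$ is bounded in $L^{\sigma/(q-1)}$ uniformly (the exponential factor being controlled on bounded sets after the $L^\infty$-bound is in reach, or more carefully by splitting $\{|v_n|\le K\}\cup\{|v_n|>K\}$ and using the uniform exponential integral on the small-measure superlevel set), so elliptic $L^p$-estimates give $|v_n|\in W^{2,p}_{\mathrm{loc}}$ and hence $|v_n|\in L^\infty(\mathbb{R}^2)$ with $\sup_n\||v_n|\|_\infty<\infty$, since the estimates are translation-uniform (the coefficients $V(\varepsilon_n\cdot+y_n)$ are bounded and bounded below, and $A(\varepsilon_n\cdot+y_n)$ is bounded on bounded sets).

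For the uniform decay $\lim_{|x|\to\infty}|v_n(x)|=0$ uniformly in $n$: since $|v_n|\to v$ in $H^1(\mathbb{R}^2)$ and $(|v_n|)$ is bounded in $L^\infty$, interpolation gives $|v_n|\to v$ in $L^\sigma$ for every $\sigma\ge 2$; combined with the $W^{2,p}_{\mathrm{loc}}$-bound and Sobolev embedding (for $p>2$, $W^{2,p}\hookrightarrow C^{0,\gamma}$) one upgrades this to uniform convergence on all of $\mathbb{R}^2$ (or at least $|v_n|\to v$ in $C^0_{\mathrm{loc}}$ with uniform modulus of continuity from the Schauder/$W^{2,p}$ bound). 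Since $v\in H^1(\mathbb{R}^2)\cap L^\infty$ with the stated decay (being a solution of a limit-type equation), for any $\eta>0$ pick $R_0$ with $\|v\|_{L^\infty(B_{R_0}^c)}<\eta/2$ and $N$ with $\sup_{n\ge N}\||v_n|-v\|_\infty<\eta/2$; for the finitely many remaining $n<N$ use the individual decay of each $|v_n|$ (each is a $W^{2,p}_{\mathrm{loc}}$ solution with $|v_n|\in L^2$, so $|v_n|(x)\to 0$ as $|x|\to\infty$). Then choose $R$ large enough to handle all of these, giving uniformity in $n$. The main obstacle is making the Moser iteration genuinely uniform in $n$ in the presence of the exponential nonlinearity: one must carefully quantify how the constant in Lemma \ref{le24} depends only on the (uniformly bounded) quantities $\||v_n|\|_2$ and on $\alpha$, and control the exponential term after each iteration step without losing uniformity — this is precisely where the bound $\limsup_n\||v_n|\|^2<1$ is essential and must be invoked repeatedly.
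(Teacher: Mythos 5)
Your overall skeleton coincides with the paper's: the first assertion via the chain $c_{V_0}\le \max_{t\ge0}I_{V_0}(t|u_n|)\le J_{\varepsilon_n}(u_n)\le c_{V_0}+h(\varepsilon_n)$ exactly as in Lemma \ref{le43}, then Proposition \ref{prop41} to produce $(\tilde y_n)$ and the $H^1$-convergence of $(|v_n|)$, then a Moser-type iteration for the $L^\infty$ bound in which the exponential term is tamed by H\"older plus a uniform bound on $\int(e^{\tau\alpha|v_n|^2}-1)\,dx$ (the paper gets this bound by dominating $|v_n|\le h$ with a fixed $h\in H^1$ extracted from the $H^1$-convergence and using the first part of Lemma \ref{le24}; your route via $\limsup_n\||v_n|\|^2<1$ works equally well). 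Where you genuinely diverge is the starting differential inequality: you invoke the magnetic Kato inequality to reduce to the scalar subsolution inequality $-\Delta|v_n|\le g(\varepsilon_n\cdot+y_n,|v_n|^2)|v_n|$, whereas the paper deliberately avoids Kato and instead tests the complex equation with $z_{L,n}=\eta^2v_{L,n}^{2(\beta-1)}v_n$, using the pointwise identity $\operatorname{Re}(\nabla_{A}v_n\cdot\overline{\nabla_{A}z_{L,n}})\ge \eta^{2}v_{L,n}^{2(\beta-1)}|\nabla|v_n||^{2}+2\eta\nabla\eta\, v_{L,n}^{2(\beta-1)}|v_n|\nabla|v_n|$ coming from the diamagnetic computation. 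Both are legitimate; Kato buys you a cleaner scalar reduction, the paper's choice keeps everything at the level of admissible $H^1_A$ test functions.

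The one genuine soft spot is your decay argument. You propose to upgrade $|v_n|\to v$ to uniform convergence on all of $\mathbb{R}^2$ via $W^{2,p}_{\mathrm{loc}}$ elliptic estimates and a uniform modulus of continuity, asserting that the coefficients $V(\varepsilon_n\cdot+y_n)$ are bounded. The hypotheses only give $V\ge V_0$ and continuity; no upper bound on $V$ (nor a global bound on $A$) is assumed, so translation-uniform $C^{0,\gamma}$ bounds over all unit balls of $\mathbb{R}^2$ are not available from the equation for $v_n$, and $|v_n|$ itself satisfies only a differential inequality, from which $W^{2,p}$ regularity does not follow. The paper sidesteps this entirely: the iteration, run with cut-offs supported in $\{|x|\ge R-r\}$, uses only the coefficient-free subsolution information and delivers directly $\|v_n\|_{L^\infty(B_R^c(0))}\le C\big(\int_{|x|\ge R}|v_n|^q\,dx\big)^{1/q}$ with $C$ independent of $n$ and $R$; the uniform decay is then immediate from the uniform smallness of the tails of $\int|v_n|^q$ guaranteed by the $L^q$-convergence of $(|v_n|)$. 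Your argument is repairable by reading the decay off this exterior estimate (which your own iteration already produces) rather than through global equicontinuity, but as written that step does not go through under the stated assumptions on $V$ and $A$.
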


\begin{proof}
Since $J_{\varepsilon_{n}}(u_{n})\leq c_{V_{0}}+h(\varepsilon_{n})$ with $\lim_{n}h(\varepsilon_{n})=0$, we can argue as in the proof of Lemma \ref{le43} (see \eqref{4.6}) to conclude that  $J_{\varepsilon_{n}}(u_{n}) \rightarrow c_{V_{0}}$.\\
Thus, by Proposition \ref{prop41}, we obtain the existence of a sequence $(\tilde{y}_{n})\subset \mathbb{R}^{2}$ such that  $(\vert v_n\vert)\subset H^{1}(\mathbb{R}^{2}, \mathbb{R})$, where $v_{n}(x):=u_{n}(x+\tilde{y}_{n})$, has a convergent subsequence in $H^{1}(\mathbb{R}^{2}, \mathbb{R})$. Moreover, up to a subsequence, $y_{n}:=\varepsilon_{n}\tilde{y}_{n}\rightarrow y\in M$ as $n\to+\infty$.\\
For any $R>0$ and $0<r\leq R/2$, let $\eta\in C^{\infty}(\mathbb{R}^{2})$, $0\leq\eta\leq 1$ with $\eta(x)=1$ if $\vert x\vert\geq R$
and $\eta(x)=0$ if $\vert x\vert\leq R-r$ and
$\vert \nabla \eta\vert\leq 2/r$.\\
For each $n\in \mathbb{N}$ and  $L>0$, we consider the functions
\begin{equation*}
v_{L, n}(x):=
\begin{cases}
\vert v_{n}(x)\vert & \text{if }\vert v_{n}(x)\vert \leq L,\\
L & \text{if }\vert v_{n}(x)\vert > L,
\end{cases}
\qquad
z_{L, n}:=\eta^{2}v_{L, n}^{2(\beta-1)}v_{n},
\quad \text{and}\quad
w_{L, n}:=\eta v_{L, n}^{\beta-1}\vert v_{n}\vert,
\end{equation*}
where $\beta>1$ will be determined later.\\

	Since,
%
	by the diamagnetic inequality \eqref{2.1} we have that
	\begin{align*}
	\operatorname{Re}(\nabla_{A_{\varepsilon_{n}}(\cdot+ \tilde{y}_n)}v_{n} \cdot \overline{\nabla_{A_{\varepsilon_{n}}(\cdot+ \tilde{y}_n)}z_{L, n}})
	&=\eta^{2}v_{L, n}^{2(\beta-1)} \vert \nabla_{A_{\varepsilon_{n}}(\cdot+ \tilde{y}_n)}v_{n}\vert^{2}
	+ \operatorname{Re}(\nabla v_{n} \overline{v_{n}}) \nabla (\eta^2 v_{L, n}^{2(\beta-1)})\\
	&=
	\eta^{2}v_{L, n}^{2(\beta-1)} \vert \nabla_{A_{\varepsilon_{n}}(\cdot+ \tilde{y}_n)}v_{n}\vert^{2}
	+ \vert v_{n}\vert \nabla \vert v_{n}\vert \nabla (\eta^2 v_{L, n}^{2(\beta-1)})\\
	&\geq \eta^{2}v_{L, n}^{2(\beta-1)} \vert \nabla \vert v_{n}\vert \vert^{2}+2\eta \nabla \eta v_{L, n}^{2(\beta-1)}\vert v_{n}\vert \nabla \vert v_n\vert,
	\end{align*}
	using also the fact that $u_{n}$ is a solution of problem \eqref{1.8} for $\varepsilon=\varepsilon_n$, the Young inequality (with $\tau>0$), (\ref{g3}), (\ref{1.4}), for $\alpha>4\pi$ and for a fixed $q>2$, given $0<\zeta<V_{0}$, there exists $C>0$ such that
	\begin{equation}\label{5.2}
	\begin{split}
	\int_{\mathbb{R}^{2}}\vert \nabla\vert v_{n}\vert\vert^{2}\eta^{2}v_{L, n}^{2(\beta-1)} dx
	&\leq
	\int_{\mathbb{R}^{2}}\vert \nabla\vert v_{n}\vert\vert^{2}\eta^{2}v_{L, n}^{2(\beta-1)}dx
	+2 \int_{\mathbb{R}^{2}} \eta \nabla \eta v_{L, n}^{2(\beta-1)}\vert v_{n}\vert \nabla \vert v_n\vert dx\\
	&\qquad
	+\int_{\mathbb{R}^{2}}V(\varepsilon_{n} x+\varepsilon_{n}\tilde{y}_{n})\eta^{2}v_{L, n}^{2(\beta-1)}\vert v_{n}\vert^{2}dx\\
	&\qquad
	+2\int_{\mathbb{R}^{2}}  \eta \vert\nabla \eta\vert v_{L, n}^{2(\beta-1)}\vert v_{n}\vert \vert\nabla \vert v_n\vert\vert
	-\zeta\int_{\mathbb{R}^{2}}\eta^{2}v_{L, n}^{2(\beta-1)} \vert v_{n}\vert^{2} dx
	\\
	&\leq
	\operatorname{Re}\int_{\mathbb{R}^{2}}(\nabla_{A_{\varepsilon_{n}}(\cdot+ \tilde{y}_n)} v_{n} \cdot\overline{\nabla_{A_{\varepsilon_{n}}(\cdot+ \tilde{y}_n)} z_{L, n}}) dx
	\\
	&\qquad
	+\operatorname{Re}\int_{\mathbb{R}^{2}}V(\varepsilon_{n} x+\varepsilon_{n}\tilde{y}_{n})v_{n}\overline{z_{L, n}}dx\\
	&\qquad
	+\tau\int_{\mathbb{R}^{2}}\vert \nabla\vert v_{n}\vert\vert^{2}\eta^{2}v_{L, n}^{2(\beta-1)}dx
	+\frac{1}{\tau}\int_{\mathbb{R}^{2}}  \vert\nabla \eta\vert^{2} v_{L, n}^{2(\beta-1)}\vert v_{n}\vert^{2} dx
	\\
	&\qquad
	-\zeta\int_{\mathbb{R}^{2}}\eta^{2}v_{L, n}^{2(\beta-1)} \vert v_{n}\vert^{2} dx\\
	&=
	\int_{\mathbb{R}^{2}}g({\varepsilon_{n} x+\varepsilon_{n}\tilde{y}_{n}}, \vert v_{n}\vert^{2})\eta^{2}v_{L, n}^{2(\beta-1)}\vert v_{n}\vert^{2} dx\\
	&\qquad
	+\tau\int_{\mathbb{R}^{2}}\vert \nabla\vert v_{n}\vert\vert^{2}\eta^{2}v_{L, n}^{2(\beta-1)}dx
	+\frac{4}{\tau r^2}\int_{R-r\leq \vert x\vert\leq R}  v_{L, n}^{2(\beta-1)}\vert v_{n}\vert^{2} dx
	\\
	&\qquad
	-\zeta\int_{\mathbb{R}^{2}}\eta^{2}v_{L, n}^{2(\beta-1)} \vert v_{n}\vert^{2} dx\\
	&\leq
	C\int_{\mathbb{R}^{2}}\vert v_{n}\vert^{q}(e^{\alpha\vert v_{n}\vert^{2} }-1) \eta^{2}v_{L, n}^{2(\beta-1)} dx\\
	&\qquad
	+\tau\int_{\mathbb{R}^{2}}\vert \nabla\vert v_{n}\vert\vert^{2}\eta^{2}v_{L, n}^{2(\beta-1)}dx
	+\frac{4}{\tau r^2}\int_{R-r\leq \vert x\vert\leq R}  \vert v_{n}\vert^{2\beta} dx.
	\end{split}
	\end{equation}
	Hence, choosing $\tau>0$ sufficiently small, we get
    \begin{equation}\label{5.4}
	\int_{\mathbb{R}^{2}}\vert \nabla\vert v_{n}\vert\vert^{2}\eta^{2}v_{L, n}^{2(\beta-1)}
	\leq C
	\Big[\int_{\vert x\vert\geq R-r}\vert v_{n}\vert^{q+2(\beta-1)}(e^{\alpha\vert v_{n}\vert^{2} }-1) dx
	+\frac{1}{r^2}\int_{R-r\leq \vert x\vert\leq R}  \vert v_{n}\vert^{2\beta} dx\Big].
	\end{equation}
	Moreover, arguing similarly to \eqref{5.2}, we can conclude that
	\begin{equation}\label{5.6}
	\int_{\mathbb{R}^{2}}\eta^{2} v_{L, n}^{2(\beta-1)}\vert v_{n}\vert^{2}dx
	\leq
	C\Big[\int_{\vert x\vert\geq R-r}\vert v_{n}\vert^{q+2(\beta-1)}(e^{\alpha\vert v_{n}\vert^{2} }-1) dx
	+\frac{1}{r^2}\int_{R-r\leq \vert x\vert\leq R}  \vert v_{n}\vert^{2\beta} dx\Big].
	\end{equation}
	On the other hand, using the Sobolev embedding, \eqref{5.4}, \eqref{5.6}, the H\"{o}lder inequality with $t,\sigma,\tau>1$, $1/\sigma+1/\tau=1/t$, $\sigma(q-2)\geq 2$, and \eqref{ineqe}, we have
 	\begin{equation}\label{5.8}
 	\begin{split}
 	\Vert w_{L, n}\Vert_{q}^{2}
 	& \leq
 	C \int_{\mathbb{R}^{2}}(\vert \nabla  w_{L, n}\vert^{2}+ \vert  w_{L, n}\vert^{2})dx\\
 	&\leq
 	C\Big(\int_{\mathbb{R}^{2}}\vert\nabla\eta\vert^{2} \vert v_{n}\vert^{2\beta}dx
 	+\beta^{2}\int_{\mathbb{R}^{2}}\eta^{2} v_{L, n}^{2(\beta-1)}\vert \nabla \vert v_{n}\vert\vert^{2}dx+ \int_{\mathbb{R}^{2}}\eta^{2} v_{L, n}^{2(\beta-1)}\vert v_{n}\vert^{2}dx\Big)\\
 	&\leq
 	C\beta^{2}\Big(\frac{1}{r^2}\int_{R-r\leq \vert x\vert\leq R} \vert v_{n}\vert^{2\beta}dx
 	+ 	\int_{\vert x\vert\geq R-r}\vert v_{n}\vert^{q+2(\beta-1)}(e^{\alpha\vert v_{n}\vert^{2} }-1) dx \Big)\\
 	&\leq
 	C\beta^{2}\left[\frac{R^{2/t}}{r^{2}}
 	+\Big(\int_{\vert x\vert\geq R-r}\vert v_{n}\vert^{\sigma(q-2)}dx\Big)^{1/\sigma}
 	\Big(\int_{\mathbb{R}^2}(e^{\tau \alpha\vert v_{n}\vert^{2} }-1)  dx\Big)^{1/\tau}\right]\\
 	&\qquad\qquad
 	\Big(\int_{\vert x\vert\geq R-r}\vert v_{n}\vert^{2\beta t/(t-1)}dx\Big)^{(t-1)/t}.
 	\end{split}
 	\end{equation}
Since $(\vert v_{n}\vert)$ is convergent in $H^{1}(\mathbb{R}^{2}, \mathbb{R})$, there exists $h\in H^{1}(\mathbb{R}^{2}, \mathbb{R})$   such that, for all $n\in \mathbb{N}$, $\vert v_{n}(x)\vert \leq h(x)$ a.e. in $\mathbb{R}^{2}$.
So, using Lemma \ref{le24}, for all $\tau>1$ and $\alpha>4\pi$, we know that
\begin{align}\label{5.9}
\int_{\mathbb{R}^{2}}(e^{\tau\alpha\vert v_{n}\vert^{2} }-1)dx\leq \int_{\mathbb{R}^{2}}(e^{\tau\alpha h^{2} }-1)dx<+\infty.
\end{align}
By \eqref{5.8} and \eqref{5.9}, it follows that
\begin{align*}
\Big(\int_{\vert x\vert\geq R}v_{L, n}^{q\beta }dx\Big)^{2/q}
\leq \Vert w_{L, n}\Vert_{q}^{2}\leq
C\beta^{2}\Big(1+\frac{R^{2/t}}{r^{2}}\Big)\Big(\int_{\vert x\vert\geq R-r}\vert v_{n}\vert^{2\beta t/(t-1)}\Big)^{(t-1)/t}
\end{align*}
and, applying the  Fatou's Lemma as $L\to+\infty$, we obtain
\begin{equation*}
\Big(\int_{\vert x\vert\geq R}|v_{n}|^{q\beta }dx\Big)^{2/q}
\leq  C\beta^{2}\Big(1+\frac{R^{2/t}}{r^{2}}\Big)\Big(\int_{\vert x\vert\geq R-r}\vert v_{n}\vert^{2\beta t/(t-1)}\Big)^{(t-1)/t}.
\end{equation*}
Arguing as in \cite{rL}, if we take $\zeta:=\frac{q(t-1)}{2t}$, $\beta:=\zeta^{m}$, with $m\in\mathbb{N}^*$, and $s:=\frac{2t}{t-1}$, we obtain
\begin{align*}
\Big(\int_{\vert x\vert\geq R}|v_{n}|^{s\zeta^{m+1} }dx\Big)^{1/(s\zeta^{m+1})}
\leq
C^{\zeta^{-m}} \zeta^{m\zeta^{-m}}
\Big(1+\frac{R^{2/t}}{r^{2}}\Big)^{1/(2\zeta^{m})}
\Big(\int_{\vert x\vert\geq R-r}\vert v_{n}\vert^{s\zeta^m}\Big)^{1/(s\zeta^{m})}
\end{align*}
for every $m\in\mathbb{N}^*$.
Then, for $r=r_{m}:=R/2^m$,  $m\in\mathbb{N}^*$, using also that $2/t<2$, we get
\begin{align*}
\Big(\int_{\vert x\vert\geq R}|v_{n}|^{s\zeta^{m+1} }dx\Big)^{1/(s\zeta^{m+1})}
&\leq
\Big(\int_{\vert x\vert\geq R-r_{m+1}}|v_{n}|^{s\zeta^{m+1} }dx\Big)^{1/(s\zeta^{m+1})}\\
&\leq
C^{\sum_{i=1}^{m}\zeta^{-i}}\zeta^{\sum_{i=1}^{m} i\zeta^{-i}}\exp \Big( \sum_{i=1}^{m}\frac{\ln(1+2^{2(i+1)})}{2\zeta^{i}}\Big)
\Big(\int_{\vert x\vert\geq R/2}|v_{n}|^{s\zeta }dx\Big)^{1/(s\zeta)}.
\end{align*}
Hence, passing to the limit as $m\rightarrow+\infty$ in the last inequality, we obtain
\begin{align}\label{5.10}
\Vert v_{n}\Vert_{L^{\infty}(B_R^c(0))}
\leq C \Big(\int_{\vert x\vert\geq R}|v_{n}|^{q }dx\Big)^{1/q}.
\end{align}
For $x_{0}\in \mathbb{R}^{2}$, we can use the same argument taking $\eta\in C_{0}^{\infty}(\mathbb{R}^{2}, [0, 1])$ with $\eta(x)=1$ if $\vert x-x_{0}\vert\leq \tilde{\rho}$, $\eta(x)=0$ if $\vert x-x_{0}\vert>2\rho$, with $\tilde{\rho} < \rho$, and $\vert \nabla \eta\vert\leq 2/\tilde{\rho}$, to prove that
\begin{align}\label{5.11}
\Vert v_{n} \Vert_{L^{\infty}(\overline{B_{2\rho}(x_0)})}\leq C
\Big(\int_{\vert x\vert\leq 2\rho}|v_{n}|^{q }dx\Big)^{1/q}.
\end{align}
Thus, by \eqref{5.10}, \eqref{5.11}, and using a standard covering argument and  the boundedness of $(|v_n|)$ in $L^q(\mathbb{R}^2,\mathbb{R})$, it follows that
\begin{align*}
\Vert v_{n} \Vert_{\infty}\leq C.
\end{align*}
Now, we use again the  convergence  of $(\vert v_{n}\vert)$  in $H^{1}(\mathbb{R}^{2}, \mathbb{R})$  on the right side
of \eqref{5.10} to get
\begin{align*}
\lim_{\vert x\vert\rightarrow+\infty}\vert v_{n}\vert=0\quad \text{uniformly in } n\in \mathbb{N}
\end{align*}
and the proof is complete.
\end{proof}

Now, we are ready to give the proof of Theorem \ref{mt}.

\begin{proof} [Proof of Theorem \ref{mt}]
Let $\delta>0$ be such that $M_{\delta}\subset\Lambda$. We want to show that there exists $\hat{\varepsilon}_{\delta}>0$ such that for any $\varepsilon\in (0, \hat{\varepsilon}_{\delta})$ and any  $u_\varepsilon \in \tilde{\mathcal{N}}_{\varepsilon}$ solution of problem \eqref{1.8}, it holds
\begin{align}\label{5.12}
\Vert u_\varepsilon \Vert^{2}_{L^{\infty}( \Lambda_{\varepsilon}^c)}\leq t_{a}.
\end{align}
We argue by contradiction and assume that there is a sequence $\varepsilon_{n}\rightarrow 0$ such that for every $n$ there exists $u_{n}\in \tilde{\mathcal{N}}_{\varepsilon_{n}}$ which satisfies $J'_{\varepsilon_{n}}(u_{n})=0$ and
\begin{align}\label{5.13}
\Vert u_{n} \Vert^{2}_{L^{\infty}( \Lambda_{\varepsilon_{n}}^c)}> t_{a}.
\end{align}
As in Lemma \ref{le51}, we have that $J_{\varepsilon_{n}}(u_{n})\rightarrow c_{V_{0}}$, and therefore we can use Proposition \ref{prop41} to obtain a sequence
 $(\tilde{y}_{n})\subset \mathbb{R}^{2}$ such that $y_n:=\varepsilon_{n}\tilde{y}_{n}\rightarrow y_{0}$
for some $y_{0}\in M$. Then, we can find $r>0$, such that $B_{r}(y_{n})\subset \Lambda$, and so $B_{r/\varepsilon_{n}}(\tilde{y}_{n})\subset \Lambda_{\varepsilon_{n}}$ for all $n$ large enough.\\
Using Lemma \ref{le51}, there exists $R>0$ such that $|v_{n}|^{2}\leq t_{a}$ in $B_R^c(0)$ and $n$ large enough, where $v_{n}=u_{n}(\cdot+\tilde{y}_{n})$. Hence $\vert u_{n}\vert^{2}\leq t_{a}$ in $B_{R}^c(\tilde{y}_{n})$ and $n$ large enough.
Moreover, if $n$ is so large that $r/\varepsilon_{n} >R$, then $ \Lambda_{\varepsilon_{n}}^c\subset  B_{r/\varepsilon_{n}}^c(\tilde{y}_{n})\subset  B_{R}^c(\tilde{y}_{n})$,
which gives $\vert u_{n}\vert^{2}\leq t_{a}$ for any $x\in \Lambda^c_{\varepsilon_{n}}$. This contradicts \eqref{5.13}
 and proves the claim.\\
Let now $\varepsilon_{\delta}:=\min\{\hat{\varepsilon}_{\delta}, \tilde{\varepsilon}_{\delta}\}$, where $\tilde{\varepsilon}_{\delta}>0$ is given by Theorem \ref{th41}. Then we have $\text{cat}_{M_{\delta}}(M)$ nontrivial solutions to problem \eqref{1.8}.
If $u_\varepsilon\in\tilde{\mathcal{N}}_{\varepsilon}$ is one of these solutions, then, by  \eqref{5.12} and the definition of $g$, we conclude that $u_\varepsilon$ is also a solution to problem \eqref{1.6}. \\
Finally, we study
the behavior of the maximum points of $\vert\hat{u}_{\varepsilon}\vert$, where $\hat{u}_{\varepsilon}(x):=u_{\varepsilon}(x/\varepsilon)$ is a solution to problem \eqref{1.1}, as $\varepsilon\to 0^+$.\\
Take $\varepsilon_{n}\rightarrow 0^+$ and the sequence $(u_n)$ where each $u_n$ is a solution of \eqref{1.8} for $\varepsilon=\varepsilon_n$. In view of (\ref{g2}), there exists $\gamma\in (0, t_{a})$ such that
\begin{equation*}
g(\varepsilon x, t^{2})t^{2}\leq \frac{V_{0}}{2}t^{2},\quad\text{for all}\,\, x\in \mathbb{R}^{2},\, \vert t\vert\leq\gamma.
\end{equation*}
Arguiguing as above
 we can take $R>0$ such that, for $n$ large enough,
\begin{equation}\label{5.14}
\Vert  u_n\Vert_{L^{\infty}(B_{R}^{c}(\tilde{y}_{n}))}<\gamma.
\end{equation}
Up to a subsequence, we may also assume that for $n$ large enough
\begin{equation}\label{5.15}
\Vert u_n \Vert_{L^{\infty}(B_{R}(\tilde{y}_{n}))}\geq\gamma.
\end{equation}
Indeed, if \eqref{5.15} does not hold, up to a subsequence, if necessary,
we have $\Vert u_n \Vert_{\infty}<\gamma$. Thus, since $J'_{\varepsilon_{n}}(u_{\varepsilon_{n}})=0$, using (\ref{g5}) and the diamagnetic inequality \eqref{2.1} that
\begin{align*}
\int_{\mathbb{R}^{2}}(\vert\nabla  \vert u_n\vert\vert^{2}+ V_{0}\vert u_n\vert^{2})dx
\leq \int_{\mathbb{R}^{2}}g(\varepsilon_{n} x, \vert u_n\vert^{2})\vert u_n\vert^{2}dx
\leq \frac{V_{0}}{k}\int_{\mathbb{R}^{2}}\vert u_n\vert^{2}dx
\end{align*}
and, being $k>1$, $\Vert  u_n \Vert=0$, which is a contradiction.\\
Taking into account \eqref{5.14} and \eqref{5.15}, we can infer that the global maximum points $p_{n}$ of $\vert u_{\varepsilon_{n}}\vert$ belongs to $B_{R}(\tilde{y}_{n})$, that is
$p_{n}=q_{n}+\tilde{y}_{n}$ for some $q_{n}\in B_{R}$. Recalling that the associated solution of problem \eqref{1.1} is $\hat{u}_{n}(x)=u_n(x/\varepsilon_{n})$, we can see that
a  maximum point $\eta_{\varepsilon_{n}}$ of $\vert \hat{u}_{n}\vert$ is $\eta_{\varepsilon_{n}}=\varepsilon_{n}\tilde{y}_{n}+\varepsilon_{n}q_{n}$. Since $q_{n}\in B_{R}$,
$\varepsilon_{n}\tilde{y}_{n}\rightarrow y_{0}$ and $V(y_{0})=V_{0}$, the continuity of $V$ allows to conclude that
\begin{equation*}
\lim_{n}V(\eta_{\varepsilon_{n}})=V_{0}.
\end{equation*}
\end{proof}

\vspace{0.5 cm}

\section*{Acknowledgements} The authors would like to thank the anonymous referees for their valuable suggestions and comments.

\end{document}